\newtheorem{thm}{Theorem}[section]
\newtheorem*{fixed point criterion}{\fixed point criterion}
\newtheorem{cor}[thm]{Corollary}
\newtheorem{lem}[thm]{Lemma}
\newtheorem{prop}[thm]{Proposition}
\theoremstyle{definition}
\theoremstyle{remark}
\newtheorem{rem}[thm]{Remark}
\numberwithin{equation}{section}
\newcommand{\Z}{\mathbb Z}
\newcommand{\Q}{\mathbb Q}
\newcommand{\fix}{\mathrm{Fix}}
\newcommand{\rk}{\mathrm{rk}}
\newcommand{\aut}{\mathrm{Aut}}
\newcommand{\edo}{\mathrm{End}}
\newcommand{\im}{\mathrm{Im}}
\begin{document}

\title[Classification of fixed subgroups of endomorphisms]{Classification of fixed subgroups of endomorphisms in free-abelian times surface groups}
\author{Ke Wang}
\address{School of Mathematics and Statistics, Xi'an Jiaotong University, Xi'an 710049, CHINA}
\email{keqiyehuopo@stu.xjtu.edu.cn}

\author{Qiang Zhang}
\address{School of Mathematics and Statistics, Xi'an Jiaotong University, Xi'an 710049, CHINA}
\email{zhangq.math@mail.xjtu.edu.cn}

\author{Dongxiao Zhao}
\address{School of Mathematics and Statistics, Xi'an Jiaotong University, Xi'an 710049, CHINA}
\email{zdxmath@stu.xjtu.edu.cn}

\thanks{The authors are partially supported by NSFC (No. 12471066).}

\subjclass[2010]{20F65, 20F34, 57M07.}

\keywords{Free group, surface group, direct product, fixed subgroup, classification}

\date{\today}
\begin{abstract}
In this paper, we first study the endomorphisms of free-abelian times surface groups and give a characterization of when they are injective and surjective. Then, we see that free-abelian times hyperbolic groups are Hopfian but not co-Hopfian. Moreover, we give a complete classification of fixed subgroups of endomorphisms in free-abelian times surface groups, which extends that of automorphisms \cite{LWZ25}. Finally, we study the endomorphisms of free-abelian times non-elementary torsion-free hyperbolic groups and give an equivalent condition for them to contain, up to isomorphism, finitely many end-fixed subgroups.
\end{abstract}
\maketitle

\section{Introduction}
For a group $G$, let $\rk(G)$ denote the \emph{rank}, i.e., the minimal cardinality of its generating set, and let $\aut(G),\edo(G)$ denote the sets of all automorphisms and endomorphisms respectively. For any $\phi\in \edo(G)$, the \emph{fixed subgroup} of $\phi$ is defined as
$$\fix\phi:=\{x\in G\mid \phi(x)=x\}.$$
Moreover, a subgroup $H$ of $G$ is called \emph{aut-fixed} (resp. \emph{end-fixed}) \emph{up to isomorphism} if $H\cong \fix\phi$ for some $\phi\in\aut(G)$ (resp. $\phi\in\edo(G)$).

For a free group $F_n$ of rank $n$, in 1989, Imrich and Turner \cite{IT89} extended a celebrated result of Bestvina and Handel \cite{BH92} (Scott's Conjecture) to every endomorphism $\phi\in\edo(F_n)$: $\rk(\fix\phi)\leq \rk(F_n).$ In 2011, Jiang, Wang and Zhang \cite{JWZ11} proved the similar finiteness for \emph{surface groups}, i.e., the fundamental groups of compact surfaces.
From then on, there were many studies on fixed subgroups and related topics for various groups, for example, \cite{BM16, HW04, LMZ25, MO12, Neu92, Sh99, WZ23}. In particular, there were a lot of researches on direct products and free products \cite{Car22,DV13,LZ23,LWZ25,RV21}.

In the recent paper \cite{LWZ25}, Lei, Wang and Zhang studied the subgroups in free-abelian times surface groups, and gave a complete classification of the aut-fixed subgroups. In this paper, we consider end-fixed subgroups of $\pi_1(\Sigma_g)\times \Z^m$ and $F_n\times \Z^m$, where $\Sigma_g$ denotes an orientable closed surface of genus $g$, and $\Z^m$ denotes the free-abelian group of rank $m$. The fundamental group $\pi_1(\Sigma_g)$ has a canonical presentation:
$$
\pi_1(\Sigma_g)=\langle a_1,b_1,\ldots ,a_g,b_g\mid [a_1,b_1]\cdots [a_g,b_g]\rangle,
$$
for $[a, b]=aba^{-1}b^{-1}$. Note that $g, k, r, s, t\ldots$ denote natural numbers, and hence they are not $\aleph_0$ (the cardinality of the integer set $\Z$) in this paper, unless stated otherwise.

We have the following main results.

\begin{thm}\label{main thm 1-free}
For a subgroup $H$ of $F_n\times\mathbb{Z}^m (n, m\geq 2)$, the following items are equivalent:

\begin{enumerate}
    \item $H$ is aut-fixed up to isomorphism;
    \item $H$ is end-fixed up to isomorphism;
    \item $H$ is not isomorphic to $F_{\aleph_0}, ~F_t\times \mathbb{Z}^m(t> n)$;
    \item $H$ is isomorphic to one of the following groups: $$F_t \times \Z^m(0 \leq t \leq n), ~F_t \times \Z^s(t \geq 0, 0 \leq s \leq m-1), ~F_{\aleph_0} \times \Z^s(1\leq s\leq m).$$
\end{enumerate}
\end{thm}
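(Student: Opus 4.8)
The plan is to establish the cycle $(1)\Rightarrow(2)\Rightarrow(3)\Rightarrow(1)$ together with the equivalence $(3)\Leftrightarrow(4)$. The implication $(1)\Rightarrow(2)$ is immediate, since $\aut(F_n\times\Z^m)\subseteq\edo(F_n\times\Z^m)$. For $(3)\Leftrightarrow(4)$ I would first record the elementary structure of subgroups of $G:=F_n\times\Z^m$: since $\Z^m$ is central, every $H\le G$ sits in a central extension $1\to H\cap\Z^m\to H\to p_1(H)\to1$ with $p_1(H)\le F_n$ free; the extension splits because free groups are projective, so $H\cong F_r\times\Z^s$ with $0\le r\le\aleph_0$, $0\le s\le m$, and conversely every such group occurs. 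A direct bookkeeping then shows that the groups excluded by $(3)$, namely $F_{\aleph_0}=F_{\aleph_0}\times\Z^0$ and the $F_t\times\Z^m$ with $t>n$, are exactly the $F_r\times\Z^s$ absent from the list in $(4)$. For $(3)\Rightarrow(1)$ I would realise each group in that list as $\fix\psi$ for a suitable $\psi\in\aut(G)$; this is the classification of aut-fixed subgroups, which I would quote from \cite{LWZ25} (the automorphisms are assembled from a Nielsen-type automorphism of $F_n$ with prescribed fixed rank, an involution on a $\Z$-coordinate used to inflate or delete rank, and the identity elsewhere — here one uses $F_{\aleph_0}\le F_n$, which needs $n\ge2$).

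The core is $(2)\Rightarrow(3)$. Given $\phi\in\edo(G)$, let $\bar\phi\in\edo(F_n)$ be $a\mapsto p_1(\phi(a,0))$, so $\im(p_1\circ\phi)=\langle\bar\phi(a_1),\dots,\bar\phi(a_n)\rangle$. If this subgroup is cyclic, then $p_1(\fix\phi)$ is cyclic, so $\fix\phi\cong F_r\times\Z^s$ with $r\le 1\le n$ and we are done. Otherwise, as $n\ge2$ gives $Z(G)=\{1\}\times\Z^m$ and an element of $F_n$ centralising a non-cyclic subgroup must be trivial, each $\phi(0,e_j)$ lies in $Z(G)$; hence $\phi(Z(G))\le Z(G)$, $\phi$ induces $\bar\phi$ on $G/Z(G)=F_n$, and one reads off
\[
\phi(w,v)=\bigl(\bar\phi(w),\,D(w)+Mv\bigr),
\]
where $D\colon F_n\to\Z^m$ is a homomorphism (automatically factoring through $F_n/[F_n,F_n]$) and $M$ is an $m\times m$ integer matrix; conversely every such triple $(\bar\phi,D,M)$ defines an endomorphism.

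Computing fixed points, $(w,v)\in\fix\phi$ iff $w\in\fix\bar\phi$ and $(I-M)v=D(w)$; projecting onto $F_n$ (a central extension with free quotient, hence split) gives $\fix\phi\cong\Z^s\times P$, where $s=\rk\ker(I-M)$ and $P=\{w\in\fix\bar\phi:D(w)\in\im(I-M)\}\le\fix\bar\phi$, while $\rk\fix\bar\phi\le n$ by Imrich--Turner \cite{IT89}. Only the extreme values of $s$ matter. If $s=m$ then $I-M=0$, so $P=\fix\bar\phi\cap\ker D$ with $\ker D\trianglelefteq F_n$; thus $P\trianglelefteq\fix\bar\phi$ and $\fix\bar\phi/P$ embeds into $F_n/\ker D$, which in turn embeds into $\Z^m$, so it is torsion-free abelian. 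Therefore $P$ is either all of $\fix\bar\phi$, giving $\rk P\le n$, or of infinite index in $\fix\bar\phi\cong F_{r_0}$, in which case $\rk P\in\{0\}\cup\{\aleph_0\}$ (Karrass--Solitar, or the Nielsen--Schreier index formula). Either way $\fix\phi\cong F_r\times\Z^m$ with $r\le n$ or $r=\aleph_0$, never $F_t\times\Z^m$ with finite $t>n$. If $s=0$ then $\det(I-M)\ne0$, so $\im(I-M)$ has finite index in $\Z^m$ and $P$ has finite index in the finitely generated group $\fix\bar\phi$; hence $\rk P<\aleph_0$ and $\fix\phi\not\cong F_{\aleph_0}$. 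When $0<s<m$ nothing need be checked, as every $F_t\times\Z^s$ and $F_{\aleph_0}\times\Z^s$ with $1\le s\le m-1$ already appears in $(4)$. This proves $(2)\Rightarrow(3)$.

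The main obstacle is the normal-form analysis of endomorphisms — the dichotomy on cyclicity of $\im(p_1\circ\phi)$ and, when it fails, forcing $\Z^m$ into the centre so that $\phi$ is governed by a triple $(\bar\phi,D,M)$ — together with the rank dichotomy for $P$ in the case $s=m$, which is precisely what distinguishes the admissible $F_{\aleph_0}\times\Z^m$ from the forbidden $F_t\times\Z^m$ ($t>n$): although $P$ lies inside a free group of rank $\le n$, it is constrained to be either that whole group or of infinite index, so its rank cannot land strictly between $n$ and $\aleph_0$. The hypothesis $n\ge2$ is used throughout (triviality of $Z(F_n)$, the centraliser argument, and the existence of $F_{\aleph_0}\le F_n$ in the realisations of $(3)\Rightarrow(1)$).
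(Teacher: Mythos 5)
Your proposal is correct and follows essentially the same route as the paper: reduce to the normal form $\Psi(u,\mathbf{a})=(\phi(u),\mathbf{a}Q+\mathbf{u}P)$ (the paper cites Delgado--Ventura's classification of $\edo(F_n\times\Z^m)$ for this; you re-derive it via the centraliser argument, with the degenerate case being abelian image), split $\fix\Psi\cong P\times\Z^s$ via the central, split exact sequence over the free quotient, and rule out the forbidden types in the extreme cases $s=m$ (where $P$ is normal in a free group of rank $\le n$, by Imrich--Turner, with torsion-free abelian quotient, hence of index $1$ or $\infty$) and $s=0$ (where $P$ has finite index, hence is finitely generated free). The only cosmetic difference is in the $s=0$ case, where you observe directly that $P$ is the kernel of a map from $\fix\bar\phi$ to the finite group $\Z^m/\im(I-M)$, rather than exhibiting the finite-index subgroup $\Gamma_d=\langle u^d,[u,v]\rangle$ as the paper does.
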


\begin{thm}\label{main thm 2-surf}
For a subgroup $H$ of $\pi_1(\Sigma_g)\times\mathbb{Z}^m  (g, m\geq 2)$, the following items are equivalent:
\begin{enumerate}
    \item $H$ is aut-fixed up to isomorphism;
    \item $H$ is end-fixed up to isomorphism;
    \item $H$ is not isomorphic to $F_{\aleph_0},  ~F_t\times \mathbb{Z}^m (t \geq 2g), ~\pi_1(\Sigma_k)\times\mathbb{Z}^m (k > g)$;
    \item $H$ is isomorphic to one of the following groups: $$F_t \times \Z^s(t \geq 0, 0 \leq s \leq m-1), ~F_t \times \Z^m(0\leq t <2g), ~F_{\aleph_0} \times \Z^s(1\leq s\leq m),$$
    $$~\pi_1(\Sigma_{k(g-1)+1})\times \Z^s(k \geq 1, 0\leq s \leq m-1), ~\pi_1(\Sigma_g)\times \Z^m.$$
\end{enumerate}
\end{thm}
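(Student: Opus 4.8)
The strategy is to prove the cycle $(1)\Rightarrow(2)\Rightarrow(3)\Rightarrow(4)\Rightarrow(1)$. Of these, $(1)\Rightarrow(2)$ is trivial since $\aut(G)\subseteq\edo(G)$, where $G:=\pi_1(\Sigma_g)\times\Z^m$; $(4)\Rightarrow(1)$ is the realisation half of the classification of aut-fixed subgroups obtained in \cite{LWZ25}; and $(3)\Leftrightarrow(4)$ is a finite bookkeeping argument once one recalls that every subgroup of $G$ is isomorphic either to $F_r\times\Z^s$ (with $0\le r\le\aleph_0$, $0\le s\le m$) or to $\pi_1(\Sigma_{k(g-1)+1})\times\Z^s$ (with $k\ge1$, $0\le s\le m$) — a subgroup of the closed surface group $\pi_1(\Sigma_g)$ being free or else a finite-index closed surface subgroup, whose genus is then $k(g-1)+1$. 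So the real content is the implication $(2)\Rightarrow(3)$: for every $\phi\in\edo(G)$, the group $\fix\phi$ is isomorphic to none of $F_{\aleph_0}$, $F_t\times\Z^m$ with $t\ge2g$, or $\pi_1(\Sigma_k)\times\Z^m$ with $k>g$.

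Write $G=S\times A$ with $S=\pi_1(\Sigma_g)$ and $A=\Z^m=Z(G)$; since $g\ge2$, the group $S$ is torsion-free hyperbolic, so it is centreless and all its abelian subgroups are cyclic. Let $p_S\colon G\to S$ and $p_A\colon G\to A$ be the projections. For $\phi\in\edo(G)$, restricting to the two direct factors gives homomorphisms $\bar\alpha:=p_S\phi|_S\in\edo(S)$, $f:=p_A\phi|_S\in\homo(S,A)$ and $\beta:=p_S\phi|_A\in\homo(A,S)$, and $\beta(A)$ centralises $\bar\alpha(S)$ in $S$ because $A$ centralises $S$ inside $G$. Now split according to whether $\bar\alpha(S)\le S$ is elementary. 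If it is (i.e. cyclic or trivial), then $\bar\alpha(S)$ and $\beta(A)$ are commuting cyclic subgroups of $S$, so $p_S(\phi(G))=\langle\bar\alpha(S),\beta(A)\rangle$ is abelian, hence cyclic; then $\phi(G)$ is contained in a subgroup of the form $\langle c\rangle\times A$, which is free abelian of rank at most $m+1$, and since $\fix\phi\subseteq\phi(G)$ we conclude $\fix\phi\cong\Z^r$ with $r\le m+1$ — on the list in $(4)$ (as $F_0\times\Z^r$ if $r\le m$, as $F_1\times\Z^m$ if $r=m+1$, using $g\ge2$).

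So assume $\bar\alpha(S)$ is non-elementary. Then its centraliser in $S$ is trivial, so $\beta(A)=1$, i.e. $\phi(A)\subseteq A$; hence $\phi$ induces an endomorphism of $G/A\cong S$, which is $\bar\alpha$, and $\phi|_A$ is an integer matrix $M$. One then computes $\phi(x,a)=(\bar\alpha(x),f(x)+Ma)$, so
$$\fix\phi=\{(x,a)\in S\times A:\bar\alpha(x)=x,\ (M-I)a=-f(x)\},$$
which surjects, via $p_S$, onto $N:=\fix\bar\alpha\cap f^{-1}(\im(M-I))\le S$ with kernel $\fix\phi\cap A=\ker(M-I)\cong\Z^s$ for some $0\le s\le m$. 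As $\fix\phi\le G$, it is already of one of the two listed subgroup types, and its parameters are read off from $N$ and from the central subgroup $\Z^s$. The two inputs needed are: (i) by Jiang--Wang--Zhang \cite{JWZ11}, together with \cite{IT89} and the Hopf and co-Hopf properties of closed surface groups, $\fix\bar\alpha$ is finitely generated and is either $\pi_1(\Sigma_g)$ (precisely when $\bar\alpha=\id_S$) or free of rank at most $2g-1$; and (ii) the $f$-preimage of a finite-index subgroup of $A$ has finite index in $S$. With these: if $\det(M-I)\ne0$, i.e. $s=0$, then $\im(M-I)$ has finite index in $A$, so by (ii) $N$ has finite index in $\fix\bar\alpha$, whence $\fix\phi\cong N$ is finitely generated and is not $F_{\aleph_0}$; if $M=I$, i.e. $s=m$, then $\fix\phi\cong(\fix\bar\alpha\cap\ker f)\times\Z^m$, which is $G$ when $\bar\alpha=\id_S$ and $f=0$, is $F_{\aleph_0}\times\Z^m$ when $\bar\alpha=\id_S$ and $f\ne0$ (because $[S,S]\subseteq\ker f$ is free of infinite rank), and is $F_t\times\Z^m$ with $t\le\rk\fix\bar\alpha\le2g-1$ otherwise — in no case $F_t\times\Z^m$ with $t\ge2g$, nor $\pi_1(\Sigma_k)\times\Z^m$ with $k>g$, nor $F_{\aleph_0}$; and in the remaining range $0<s<m$ one checks similarly that $\fix\phi$ is of type $F_r\times\Z^{s'}$ or $\pi_1(\Sigma_{k(g-1)+1})\times\Z^{s'}$ with $1\le s'\le m-1$, or else is free abelian of rank $\le m$ — again none of the three excluded types. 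This gives $(2)\Rightarrow(3)$, and with the easy implications above, the theorem.

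The step I expect to be the main obstacle is the non-elementary case: one has to extract from the surface-group fixed-subgroup theory the sharp statement that $\fix\bar\alpha$ is $\pi_1(\Sigma_g)$ or free of rank at most $2g-1$ — equivalently, that no endomorphism of a genus-$g$ closed surface group has fixed subgroup isomorphic to $F_{2g}$ — and then check, case by case on the matrix $M$ and the character $f$, that the parameters of $\fix\phi$ stay within the ranges of list $(4)$; the decisive dichotomy is that $\det(M-I)\ne0$ (equivalently $s=0$) forces $N$, and hence $\fix\phi$, to be finitely generated, so that $F_{\aleph_0}$ can arise only alongside a non-trivial centre $\Z^s$ with $s\ge1$, which is exactly the list's restriction $1\le s\le m$. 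A secondary point — that $\fix\phi$ is a genuine direct product rather than a twisted central extension of $N$ by $\Z^s$ — is subsumed in the cited classification of subgroups of $\pi_1(\Sigma_g)\times\Z^m$.
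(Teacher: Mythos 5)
Your proposal is correct and follows essentially the same route as the paper: the whole content is $(2)\Rightarrow(3)$, which the paper likewise proves by splitting endomorphisms into those with abelian image and those of the form $u\mathbf{t}^{\mathbf{a}}\mapsto\phi(u)\mathbf{t}^{\mathbf{aQ}+\mathbf{uP}}$, writing $\fix\Psi\cong p(\fix\Psi)\times\Z^s$ with $\Z^s=\ker(p_\Psi)$ determined by $\mathbf{I}-\mathbf{Q}$, and running the same case analysis $s=0$, $0<s<m$, $s=m$ (your $\det(M-I)\neq 0$ versus $M=I$ dichotomy, with the finite-index argument for $s=0$). The only, harmless, slip is in your $M=I$, $\bar\alpha\neq\mathrm{id}$ branch: $\fix\bar\alpha\cap\ker f$ can also be $F_{\aleph_0}$ or trivial (a nontrivial normal subgroup of infinite index in a finitely generated free group is $F_{\aleph_0}$), not only $F_t$ with $t\leq 2g-1$ — but since $F_{\aleph_0}\times\Z^m$ is on the allowed list, the implication is unaffected.
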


Note that the above Item (1), Item (3) and Item (4) have been proven to be equivalent in \cite{LWZ25}, and Item (1) implies Item (2) clearly, we only need to prove that Item (2) implies Item (3).
When $m=1$, both of $\pi_1(\Sigma_g)\times\Z$ and $F_n\times\Z$ contain, up to isomorphism, only finitely many fixed subgroups of automorphisms, see \cite[Theorem 4.3, Theorem 5.3]{LWZ25}. In contrast, they contain infinitely many fixed subgroups of endomorphisms. More precisely, we have the following.

\begin{thm}\label{main thm 3-Z free}
Let $H$ be a subgroup  of $F_n\times\Z(n \geq 2)$. Then

 (1) $H$ is aut-fixed up to isomorphism if and only if $H$ is not isomorphic to
 $$F_{\aleph_0}, ~F_t\times\Z(t>n); \quad F_{2k} (2k> n ), ~F_t(t>2n).$$

 (2) $H$ is end-fixed up to isomorphism if and only if $H$ is not isomorphic to
 $$ F_{\aleph_0}, ~F_t\times\Z(t>n).$$
\end{thm}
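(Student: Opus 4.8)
The plan is to prove Item~(2) in full; Item~(1) is established in \cite{LWZ25} and can also be recovered from the analysis below by restricting to automorphisms. Write $G=F_n\times\Z=\langle x_1,\dots,x_n\rangle\times\langle z\rangle$, and let $p\colon G\to F_n$ be the projection. First I would record the shape of an arbitrary subgroup $H\le G$: the group $p(H)$ is free, $H\cap\Z$ is central in $H$, and the extension $1\to H\cap\Z\to H\to p(H)\to1$ splits because $p(H)$ is free, with $H\cap\Z$ central, so $H\cong F_r\times\Z^s$ with $0\le r\le\aleph_0$ and $s\in\{0,1\}$. Hence the subgroups of $G$ forbidden by Item~(2) are exactly $F_{\aleph_0}$ and $F_t\times\Z$ with $n<t<\aleph_0$, and it remains to show: (a) no $\fix\phi$ is isomorphic to one of these; and (b) each allowed type --- $F_r$ $(r<\aleph_0)$, $F_r\times\Z$ $(r\le n)$, and $F_{\aleph_0}\times\Z$ --- is realized as a fixed subgroup.

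For (a) I would fix $\phi\in\edo(G)$ and split according to $p(\im\phi)$. If $p(\im\phi)$ is cyclic, then $\im\phi\le p(\im\phi)\times\Z$ is free abelian of rank $\le2$, so $\fix\phi\subseteq\im\phi$ gives $\fix\phi\in\{1,\Z,\Z^2\}$, none forbidden. If $p(\im\phi)$ is non-cyclic, then since $\phi(z)$ commutes with $\im\phi$, its image $p(\phi(z))$ centralizes the non-cyclic free subgroup $p(\im\phi)$ and is therefore trivial, so $\phi(z)=z^{\ell}$ for some $\ell\in\Z$; writing $\phi(w)=(\alpha(w),h(w))$ for $w\in F_n$ defines $\alpha\in\edo(F_n)$ and a homomorphism $h\in\homo(F_n,\Z)$ (the $\Z$-coordinate being additive), and a direct computation gives
$$\fix\phi=\{(w,k): w\in\fix\alpha,\ h(w)=(1-\ell)k\}.$$
By \cite{IT89}, $\fix\alpha$ is free of rank $\le n$. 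If $\ell=1$ then $\fix\phi=(\fix\alpha\cap\ker h)\times\Z$, and $\fix\alpha\cap\ker h=\ker(h|_{\fix\alpha})$ is either all of $\fix\alpha$ (rank $\le n$) or a normal subgroup of the free group $\fix\alpha$ of infinite index, hence --- as finitely generated normal subgroups of free groups have finite index --- trivial or of infinite rank; so $\fix\phi\cong F_r\times\Z$ with $r\le n$ or $r=\aleph_0$. If $\ell\ne1$ then the map $(w,k)\mapsto w$ identifies $\fix\phi$ with the $h|_{\fix\alpha}$-preimage of $(1-\ell)\Z$, a finite-index subgroup of $\fix\alpha$, hence free of finite rank. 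In every case $\fix\phi$ is neither $\cong F_{\aleph_0}$ nor $\cong F_t\times\Z$ with $n<t<\aleph_0$, proving (a).

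For (b): the type $F_r\times\Z$ $(r\le n)$ is $\fix\phi$ for the retraction $x_i\mapsto x_i\ (i\le r)$, $x_i\mapsto1\ (i>r)$, $z\mapsto z$; and $F_{\aleph_0}\times\Z$ is $\fix\phi$ for $x_1\mapsto x_1z$, $x_i\mapsto x_i\ (i\ge2)$, $z\mapsto z$, since then $\fix\phi=\ker h\times\Z$ with $\ker h\le F_n$ of infinite rank (as $n\ge2$). The essential new point is the realization of $F_r$ for every finite $r$: for $r\le1$ take $x_i\mapsto1\ (i\ge2)$, $z\mapsto1$, and $x_1\mapsto1$ or $x_1\mapsto x_1$; for $r\ge2$ take
$$\phi(x_1)=x_1z,\qquad\phi(x_2)=x_2,\qquad\phi(x_i)=1\ (3\le i\le n),\qquad\phi(z)=z^{2-r},$$
so that $\fix\alpha=\langle x_1,x_2\rangle\cong F_2$, $h(x_1)=1$, $h(x_2)=0$, $\ell=2-r$, and by the $\ell\ne1$ computation $\fix\phi$ is identified with the subgroup $\{w\in F_2: (r-1)\mid h(w)\}$ of index $r-1$ in $F_2$, which is free of rank $(r-1)(2-1)+1=r$; only $n\ge2$ is used. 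Combining (a) and (b) proves Item~(2); Item~(1) then follows from the same computation with $\phi\in\aut(G)$, which preserves the center so $\ell=\pm1$, and with $\fix\psi$ now an aut-fixed subgroup of $F_n$ of rank $\le n$ \cite{BH92} --- the case $\ell=-1$ contributing the index-$\le2$ subgroups $F_r$ and $F_{2r-1}$ $(r\le n)$, whence the odd-rank entries up to $2n-1$ in the list.

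The step requiring care --- though not a deep obstacle --- is the $\ell=1$ sub-case of (a): ruling out $\fix\phi\cong F_t\times\Z$ for a finite integer $t>n$ rests precisely on the fact that a finitely generated normal subgroup of a free group has finite index, together with the rank bound $\rk(\fix\alpha)\le n$ of \cite{IT89}. The cyclic-image case, the $\ell\ne1$ case, and all the realizations in (b) are routine once the appropriate endomorphisms are written down.
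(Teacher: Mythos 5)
Your proposal is correct and takes essentially the same route as the paper: your part (a) is the $m=1$ specialization of the paper's Theorem \ref{Thm 3.1} (split off the central $\Z$ via the projection to $F_n$, apply Imrich--Turner's rank bound, and treat the cases $\ell=1$ and $\ell\neq 1$ exactly as the paper treats $\textbf{Q}=\textbf{I}$ and $\textbf{I}-\textbf{Q}$ invertible, using that a nontrivial normal subgroup of infinite index in a free group is $F_{\aleph_0}$), while your key realization of $F_r$ for $r>n$ is the same index-$(r-1)$ construction the paper uses. The only difference is self-containedness: you re-derive the subgroup classification and the realizations that the paper imports from Delgado--Ventura and from \cite{LWZ25}.
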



\begin{thm}\label{main thm 4-Z surf}
 Let $H$ be a subgroup  of $\pi_1(\Sigma_g)\times\Z(g \geq 2)$. Then

 (1) $H$ is aut-fixed up to isomorphism if and only if $H$ is not isomorphic to
 $$F_{\aleph_0},  ~F_t\times\Z(t \geq 2g), ~\pi_1(\Sigma_k)\times\Z(k > g);$$
 $$F_{2n} (n\geq g ), ~F_t(t \geq 4g-1), ~~\pi_1(\Sigma_k)(2g-1\neq k>g).$$

 (2) $H$ is end-fixed up to isomorphism if and only if $H$ is not isomorphic to
$$ F_{\aleph_0},  ~F_t\times\Z(t \geq 2g), ~\pi_1(\Sigma_k)\times\Z(k > g). $$
\end{thm}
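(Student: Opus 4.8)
By \cite{LWZ25}, Item~(1) of the theorem (the classification of aut-fixed subgroups, via Nielsen--Schreier and Euler characteristic bookkeeping) is already known, and an aut-fixed subgroup is obviously end-fixed; so the plan is to establish exactly the two missing directions: (a) realize, as end-fixed subgroups, the ``extra'' isomorphism types that are end-fixed but not aut-fixed, namely $F_t$ for all finite $t$ and $\pi_1(\Sigma_k)$ for all $k\ge g$; and (b) show that no $\phi\in\edo(\pi_1(\Sigma_g)\times\Z)$ has $\fix\phi$ isomorphic to $F_{\aleph_0}$, to $F_t\times\Z$ with $t\ge 2g$, or to $\pi_1(\Sigma_k)\times\Z$ with $k>g$. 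Write $S=\pi_1(\Sigma_g)$ and $G=S\times\Z$. The starting point is that any $\phi\in\edo(G)$ is determined by $\phi(s,0)=(\psi(s),f(s))$ for a homomorphism $\psi\colon S\to S$ and a homomorphism $f\colon S\to\Z$, together with $\phi(0,1)=(c,d)$ where $c$ must centralise $\psi(S)$; these are forced by the relations of $G$, and the argument then splits on the image of $\psi$.

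If $\psi(S)$ is elementary (trivial or infinite cyclic), then $\im\phi$ lies in an abelian subgroup of $G$ of rank at most $2$, so $\fix\phi\cong\Z^s$ with $0\le s\le 2$, which is none of the three forbidden groups. If $\psi(S)$ is non-elementary, its centraliser in the torsion-free hyperbolic group $S$ is trivial, so $c=1$ and $\phi(s,n)=(\psi(s),f(s)+dn)$. When $d\ne 1$, the fixed-point equations force $n=f(s)/(1-d)$, whence $\fix\phi\cong K:=\{s\in\fix\psi:\ (1-d)\mid f(s)\}$; since $K$ is the kernel of a homomorphism $\fix\psi\to\Z/(1-d)$, it has finite index in $\fix\psi$, is finitely generated, and embeds in $S$, so it is free of finite rank or a surface subgroup $\pi_1(\Sigma_k)$ --- again none of the forbidden groups. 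This case also supplies the realizations: taking $\psi$ a retraction of $S$ onto the free subgroup $\langle a_1,a_2\rangle$, $f$ the map with $f(a_1)=1$ and $f$ trivial on the remaining standard generators, and $d$ with $|1-d|=t-1$, one gets $\fix\phi\cong F_t$ for every $t\ge 2$ (the cases $t\le 1$ being trivial); and taking $\psi=\id$, the same $f$ made surjective, and $|1-d|=e$, one gets $\fix\phi\cong\ker(S\to\Z/e)\cong\pi_1(\Sigma_{e(g-1)+1})$, i.e.\ every surface subgroup up to isomorphism.

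It remains to handle $d=1$, where $\phi(s,n)=(\psi(s),f(s)+n)$ and hence $\fix\phi\cong L\times\Z$ with $L:=\fix\psi\cap\ker f\le S$. Here I would invoke the structural facts on endomorphisms of the closed surface group $S$: $\fix\psi$ is finitely generated of rank at most $2g$ \cite{JWZ11}; if $\fix\psi$ has finite index in $S$ then $\psi=\id$ (for any $x\in S$ the element $x^{-1}\psi(x)$ centralises the non-elementary subgroup $\fix\psi\cap x^{-1}(\fix\psi)x$, hence is trivial); and if $\psi\ne\id$ then $\fix\psi$ is free of rank at most $2g-1$. Consequently $L$ is either all of $S$ (which forces $\psi=\id$ and $f=0$), or free of rank at most $2g-1$ (when $f$ vanishes on $\fix\psi$), or --- since a nontrivial normal subgroup of infinite index in a finitely generated free group of rank $\ge 2$ has infinite rank --- isomorphic to $F_{\aleph_0}$ (when $f$ does not vanish on $\fix\psi$); the degenerate possibilities $L\cong\Z$ or $L=1$ aside, $\fix\phi$ is therefore $\pi_1(\Sigma_g)\times\Z$, $F_t\times\Z$ with $t\le 2g-1$, or $F_{\aleph_0}\times\Z$, all of which lie in the allowed list. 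In particular $\pi_1(\Sigma_k)\times\Z$ with $k>g$ and $F_t\times\Z$ with $t\ge 2g$ never occur. Combining the three cases yields (b), and together with \cite{LWZ25} and the realizations above this proves Item~(2).

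I expect the main obstacle to be the structural input on $\fix\psi$ for $\psi\in\edo(\pi_1(\Sigma_g))$ used in the last paragraph --- above all, excluding free fixed subgroups of the maximal rank $2g$ (equivalently, showing that such a fixed subgroup would be forced to surject onto $H_1(\Sigma_g)$, leaving no admissible auxiliary homomorphism $f$ annihilating it), which is what separates $F_{2g-1}\times\Z$ (allowed) from $F_{2g}\times\Z$ (forbidden); pinning down $\phi(0,1)$ via the centraliser dichotomy and keeping the twisting homomorphism $f$ bookkept across the cases is routine but must be done carefully. The identical scheme proves Theorem~\ref{main thm 3-Z free} with $S=F_n$, the only change being that the needed rank statements for $\fix\psi$ reduce to the elementary Nielsen--Schreier bound.
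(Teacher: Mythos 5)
Your proposal is correct and follows essentially the same route as the paper: the same case split (abelian image; non-elementary image with $d\neq 1$, yielding a finite-index subgroup of $\fix\psi$ and hence the realizations of $F_t$ and $\pi_1(\Sigma_{e(g-1)+1})$ via a retraction onto $\langle a_1,a_2\rangle$ resp.\ $\psi=\id$ twisted by an exponent-sum homomorphism; and $d=1$, yielding $(\fix\psi\cap\ker f)\times\Z$), with the same external inputs (\cite{JWZ11} for $\fix\psi\cong F_k$, $k<2g$, or $\pi_1(\Sigma_g)$, and \cite{LWZ25} for the aut-fixed list). The only cosmetic point is that when $\psi=\id$ and $f\neq 0$, the subgroup $L=\ker f$ is an infinite-index normal subgroup of the surface group $S$ rather than of a free group, so the conclusion $L\cong F_{\aleph_0}$ there should be drawn from Griffiths' theorem (Lemma \ref{subgp of surface}(4)) rather than from the free-group fact you quote.
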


Recall that free-abelian groups $\Z^n$ and free groups $F_n(n\geq 1)$ are Hopfian (i.e. every epimorphism is an automorphism) but not co-Hopfian (i.e. every monomorphism is an automorphism) since they contain proper subgroups isomorphic to themselves; every closed surface group  $\pi_1(\Sigma_g)(g\geq 2)$ is both Hopfian and co-Hopfian \cite{Ba62,DVa97,Fr63,He72,Ho31}. Now, we show that free-abelian times surface groups are Hopfian through the residually finiteness of surface groups, and show they are not co-Hopfian by constructing isomorphic proper subgroups.

\begin{cor}\label{Hopf cor}
Let $G=H\times\mathbb{Z}^m (m\geq 1)$ for $H=F_g$ or $H=\pi_1(\Sigma_g)$ $(g\geq 1)$. Then $G$ is Hopfian but not co-Hopfian.	
\end{cor}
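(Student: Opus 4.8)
The plan is to treat Hopfianity and non-co-Hopfianity separately, and to work in the slightly larger generality of $G = H \times \Z^m$ with $H$ finitely generated residually finite (which covers $H = F_g$ and $H = \pi_1(\Sigma_g)$, both residually finite). For the Hopfian part, I would invoke the classical theorem of Mal'cev that a finitely generated residually finite group is Hopfian. So it suffices to observe that $G$ is finitely generated (clear, being a direct product of two finitely generated groups) and residually finite: residual finiteness is preserved under direct products, and both $F_g$ and $\pi_1(\Sigma_g)$ are residually finite (the latter being a classical fact, e.g. since surface groups are linear, or by Baumslag). Hence $G$ is Hopfian. Alternatively, if one prefers a self-contained argument, one can note that any epimorphism $\phi$ of $G$ is surjective, and use the characterization of surjective endomorphisms of $F_n \times \Z^m$ (resp.\ $\pi_1(\Sigma_g)\times \Z^m$) established earlier in the paper together with the Hopfian property of the factors to upgrade surjectivity to injectivity; but the Mal'cev route is cleanest.

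For the non-co-Hopfian part, I would exhibit an explicit injective endomorphism of $G$ that is not surjective. The idea is to shrink the $\Z^m$ factor: fix a nontrivial homomorphism $\Z^m \to \Z^m$ with image a proper subgroup, e.g.\ the "multiplication by $2$" map $v \mapsto 2v$ (or, if $m \ge 2$, the shift sending a basis into a proper direct summand), and take $\phi = \id_H \times \mu$ where $\mu$ is this map. Then $\phi$ is a monomorphism since both $\id_H$ and $\mu$ are injective, while $\phi(G) = H \times \mu(\Z^m) = H \times 2\Z^m$ is a proper subgroup of $G$, so $\phi$ is not an automorphism. This shows $G$ is not co-Hopfian. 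Note $H \times 2\Z^m \cong H \times \Z^m = G$, so indeed $G$ contains a proper subgroup isomorphic to itself, as asserted in the statement's phrasing.

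I do not anticipate a genuine obstacle here; the only point requiring a tiny bit of care is citing residual finiteness of $\pi_1(\Sigma_g)$ correctly (for all $g \ge 1$, including the low-genus and closed cases) and making sure the relevant direct-product closure statements for "finitely generated" and "residually finite" are stated with the hypotheses actually used. Everything else is immediate from standard facts, so the write-up should be short.
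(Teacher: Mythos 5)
Your proof is correct, but it is organized differently from the paper's. The paper argues by cases on $H$: for $H=F_1$ or $\pi_1(\Sigma_1)$ the group $G$ is free abelian; for $H=F_g$ ($g\geq 2$) it uses exactly your Mal'cev-type argument (finitely generated $+$ residually finite $\Rightarrow$ Hopfian) but gets non-co-Hopfianity by shrinking the \emph{free} factor, embedding $F_g=A*B$ onto the proper subgroup $A*(a_2a_1Ba_1^{-1}a_2^{-1})$; and for $H=\pi_1(\Sigma_g)$ ($g\geq 2$) it instead invokes Proposition \ref{Hopf non co-hopf prop}(3), i.e.\ the explicit classification of monomorphisms and epimorphisms of $\pi_1(\Sigma_g)\times\Z^m$, which simultaneously gives Hopfian (every epimorphism $\Psi_{\phi,\textbf{Q},\textbf{P}}$ has an explicit inverse) and not co-Hopfian (a monomorphism with $\det\textbf{Q}\neq\pm1$ is not onto). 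Your route is more uniform and more elementary: residual finiteness handles Hopfianity in all cases at once, and shrinking the $\Z^m$ factor via $v\mapsto 2v$ gives the proper isomorphic subgroup $H\times 2\Z^m$ regardless of whether $H$ itself is co-Hopfian (which matters, since $\pi_1(\Sigma_g)$ \emph{is} co-Hopfian for $g\geq 2$, so one cannot shrink the surface factor as the paper does for the free factor); indeed this $H\times(2\Z)^m$ trick is precisely what the paper uses later in the proof of Corollary \ref{Hopf cor for hyperbolic case}. What the paper's heavier route buys is the extra structural output of Proposition \ref{Hopf non co-hopf prop} (the explicit formula for $\Psi^{-1}$), which is wanted elsewhere but is not needed for the corollary itself. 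The only points to state carefully in your write-up are residual finiteness of $\pi_1(\Sigma_g)$ for all $g\geq 1$ and the closure of residual finiteness under finite direct products, both standard.
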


Moreover, this result can be generalized to all (Gromov) hyperbolic groups. However, since the question of whether hyperbolic groups are always residually finite remains open, we cannot directly deduce this conclusion from the residually finiteness. In fact, it can be derived from the Hopfian property of hyperbolic groups \cite{FS23} and a corollary in \cite{Hi69}.

\begin{cor}\label{Hopf cor for hyperbolic case}
    Let $G=H\times\mathbb{Z}^m (m\geq 1)$ for $H$ a hyperbolic group. Then, $G$ is Hopfian but not co-Hopfian.
\end{cor}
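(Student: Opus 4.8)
The plan is to prove Corollary~\ref{Hopf cor for hyperbolic case}, that $G=H\times\Z^m$ is Hopfian but not co-Hopfian for every hyperbolic group $H$ and every $m\geq 1$, by separating the two assertions and reducing each to a known structural input.

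For the Hopfian part, I would first recall that hyperbolic groups are Hopfian by Sela's theorem \cite{FS23} (and in fact every hyperbolic group is), and that $\Z^m$ is obviously Hopfian. The key point is that the class of Hopfian groups is not closed under direct products in general, so one cannot simply invoke a product lemma blindly; instead I would appeal to the corollary of G.~Higman cited as \cite{Hi69}, which gives a criterion under which a direct product $A\times B$ is Hopfian — typically requiring that $A$ and $B$ are Hopfian and that, say, $B$ is finitely generated with $A$ having no proper finite-index subgroup mapping onto a nontrivial quotient of $B$, or more precisely that $A^{ab}$ and $B$ share no common finite quotient, or the cleanest version: if $A$ and $B$ are finitely generated Hopfian groups and $\homo(A,Z(B))$-type obstructions vanish. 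I would state the precise form of Higman's corollary as used here, verify its hypotheses for $A=H$ (a hyperbolic, hence finitely generated, Hopfian group) and $B=\Z^m$, and conclude $G$ is Hopfian. The main obstacle is checking the hypotheses of Higman's criterion: one needs to control homomorphisms between $H$ (or its abelianization) and $\Z^m$, using that $H$ is finitely generated so $\homo(H,\Z^m)$ is a finitely generated abelian group, and then argue that an arbitrary epimorphism $\varphi\colon G\to G$ must preserve the two factors up to the necessary extent — here one uses that $\Z^m$ is, or is close to, the center, and that the projection of $\varphi(\Z^m)$ to $H$ lands in a suitable subgroup, so $\varphi$ descends to an epimorphism $H\to H$, which is an automorphism by Hopfianness of $H$, and similarly on the $\Z^m$ factor.

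For the non-co-Hopfian part, the argument is constructive and parallels the proof of Corollary~\ref{Hopf cor}: I would exhibit an injective endomorphism of $G$ that is not surjective. Taking any nontrivial element, the simplest choice is to use the $\Z^m$ factor: define $\psi\colon H\times\Z^m\to H\times\Z^m$ by $\psi(h,v)=(h,2v)$ (doubling on the free-abelian part, identity on $H$). This is clearly an injective endomorphism, and its image is $H\times(2\Z)^m$, a proper subgroup; hence $G$ is not co-Hopfian. This half is routine and has no real obstacle; it does not even use hyperbolicity of $H$, only that $m\geq 1$.

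Putting these together gives the corollary. The only genuinely delicate step is the Hopfian direction, specifically pinning down the exact statement of Higman's corollary from \cite{Hi69} and matching its hypotheses to the pair $(H,\Z^m)$; I expect the verification to reduce to the elementary fact that $\homo(\Z^m,H^{ab})$ and the finite quotients of $H$ interact trivially enough (e.g.\ because any finite-index characteristic behavior is absorbed), together with the Hopfianness of each factor. Once that is in place, the conclusion that every epimorphism $G\to G$ is an automorphism follows formally, and the corollary is proved.
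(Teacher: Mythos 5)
Your non-co-Hopfian half is correct and is exactly what the paper does (the paper uses the proper subgroup $H\times(2\Z)^m\cong H\times\Z^m$, which is your doubling map in different clothing). The problem is the Hopfian half, where your proposal leaves the decisive step unresolved and the candidate criteria you float would not work. Hopfianness is genuinely not preserved under taking a direct product with $\Z^m$, so everything hinges on quoting the right statement from Hirshon's paper \cite{Hi69} (note: Hirshon, not Higman) and checking its hypothesis. The versions you suggest --- e.g.\ that $H^{\mathrm{ab}}$ and $\Z^m$ share no common finite quotient --- cannot be the right ones: that condition fails for every hyperbolic group with positive first Betti number (free groups, surface groups, \dots), yet the corollary is asserted for all hyperbolic $H$. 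The hypothesis actually used in the paper is of a different nature: the relevant corollary of \cite{Hi69} requires that $H$ \emph{and its central quotient} $H/Z(H)$ both be Hopfian (with the other factor finitely generated abelian). The verification is then: for $H$ non-elementary, the center $Z(H)$ is finite, so $H/Z(H)$ is again hyperbolic and hence Hopfian by \cite{FS23}; for $H$ elementary (finite or virtually infinite cyclic), $H$ is residually finite, so $H\times\Z^m$ is a finitely generated residually finite group and is Hopfian for that reason. Your proposal never identifies the central-quotient condition, which is the only nontrivial thing to check.

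Your fallback plan --- to show directly that an epimorphism of $G$ ``preserves the two factors'' and descends to an epimorphism of $H$ --- also does not go through as stated for arbitrary hyperbolic $H$: the structural description of endomorphisms of $H\times\Z^m$ in the paper (Proposition \ref{des of hyperbolic-end}) is established only for \emph{torsion-free non-elementary} hyperbolic groups, since it relies on commuting nontrivial elements generating a cyclic group (Lemma \ref{lem coummte same axis}), which fails in the presence of torsion. So for the corollary in its stated generality you cannot route the argument through that factor-preservation analysis; you need either Hirshon's criterion with the central-quotient hypothesis, or residual finiteness in the elementary case.
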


For every non-elementary torsion-free hyperbolic group $H$, Lei, Wang and Zhang \cite[Theorem 1.6]{LWZ25} provided an equivalent condition for $H\times \Z^k (k \geq2)$ to contain, up to isomorphism, infinitely many aut-fixed subgroups. We adapt their result concerning the aut-fixed subgroups of $H\times \Z^k (k\geq 2)$ into the following theorem about the end-fixed subgroups of $H\times\Z^k (k\geq 1)$ through a similar argument.

\begin{thm}\label{thm hyper. times Z^m finite end-fixed subgps}
    Let $H$ be a non-elementary torsion-free hyperbolic group, and let $\beta_1(H)$ be its first betti number. Then $H\times\Z^k\ (k\geq 1)$ contains, up to isomorphism, finitely many fixed subgroups of endomorphisms if and only if $\beta_1(H)=0$ and $H$ contains, up to isomorphism, finitely many fixed subgroups of endomorphisms.
\end{thm}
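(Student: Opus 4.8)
The plan is to prove both implications; write $A=\Z^k$ and $G=H\times A$, and use throughout that $H$, being non-elementary torsion-free hyperbolic, has trivial center and that the centralizer in $H$ of any nontrivial element is infinite cyclic. Consequently every subgroup of $H$ with nontrivial center is trivial or $\Z$, and every finite-index subgroup of $H$ is again non-elementary torsion-free hyperbolic, in particular centerless.

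\textbf{Structure of $\edo(G)$.} Given $\phi\in\edo(G)$, let $\pi_H,\pi_A$ be the two projections and set $\alpha=\pi_H\phi|_{H}$, $u=\pi_A\phi|_{H}$, $\beta=\pi_H\phi|_{A}$, $v=\pi_A\phi|_{A}$, where $H$ and $A$ denote the two direct factors of $G$. Since the direct factors commute elementwise, $[\alpha(h),\beta(a)]=1$ in $H$ for all $h,a$; also $u$ factors through $H^{ab}$ since $A$ is abelian. If $\beta$ is nontrivial, pick $w\in\im\beta\setminus\{1\}$: then $\alpha(H)$ and $\im\beta$ lie in the infinite cyclic group $C_H(w)$, so $\im\phi\le C_H(w)\times A$ is finitely generated free abelian and $\fix\phi\le\im\phi$ is free abelian of rank $\le k+1$. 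If $\beta$ is trivial, then $\phi(h,a)=(\alpha(h),u(h)+v(a))$, so $(h,a)\in\fix\phi$ iff $h\in\fix\alpha$ and $(\id-v)(a)=u(h)$; projecting to the first coordinate exhibits $\fix\phi$ as a central extension of $N:=\fix\alpha\cap u^{-1}(\im(\id-v))$ by $\ker(\id-v)\cong\Z^{j}$ with $j\le k$. As $u|_{N}$ kills torsion and so factors through a free abelian group, and $\im(\id-v)$ is a quotient of the free abelian group $A$, this lift has a homomorphic section, whence $\fix\phi\cong N\times\Z^{j}$.

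\textbf{The ``if'' direction.} Suppose $\beta_1(H)=0$ and $H$ has, up to isomorphism, finitely many end-fixed subgroups. Then $H^{ab}$ is finite, so every homomorphism $H\to A$ is trivial; in the dichotomy above this forces $u=0$, $N=\fix\alpha$, hence either $\fix\phi\cong\fix\alpha\times\Z^{j}$ with $\alpha\in\edo(H)$, $0\le j\le k$, or $\fix\phi$ is free abelian of rank $\le k+1$. In both cases $\fix\phi$ lies in one of finitely many isomorphism types, so $G$ has finitely many end-fixed subgroups up to isomorphism.

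\textbf{The converse (contrapositive).} Assume $\beta_1(H)\ge1$ or $H$ has infinitely many end-fixed subgroups. We use a cancellation remark: if $L,L'\le H$ and $L\times\Z^{r}\cong L'\times\Z^{r}$ then $L\cong L'$ — for $L$ centerless, quotient by the center $\Z^{r}$; for $L$ with nontrivial center, $L$ is trivial or $\Z$, which is then forced for $L'$ by a rank count. If $H$ has infinitely many end-fixed subgroups, choose $\psi_i\in\edo(H)$ with pairwise non-isomorphic $\fix\psi_i$; then $\psi_i\times\id_A$ has fixed subgroup $\fix\psi_i\times\Z^{k}$, and these are pairwise non-isomorphic by the remark. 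If $\beta_1(H)\ge1$, fix a surjection $\theta\colon H\twoheadrightarrow\Z$ and a basis $e_1,\dots,e_k$ of $A$, and for $m\in\Z\setminus\{1\}$ let $\phi_m\in\edo(G)$ be determined by $\phi_m(h,0)=(h,\theta(h)e_1)$, $\phi_m(0,e_1)=(1,m\,e_1)$ and $\phi_m(0,e_i)=(0,e_i)$ for $i\ge2$. A direct check shows $\phi_m$ is a well-defined endomorphism with $\fix\phi_m\cong\theta^{-1}((1-m)\Z)\times\Z^{k-1}$, the first factor being a subgroup of $H$ of index $|1-m|$. Since each such subgroup is non-elementary hyperbolic, hence centerless, the cancellation remark reduces us to showing that the finite-index subgroups $\theta^{-1}(d\Z)\le H$ ($d\ge1$) realize infinitely many isomorphism types; granting this, $G$ has infinitely many end-fixed subgroups up to isomorphism.

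\textbf{Main obstacle.} The crux — and the step we import from \cite{LWZ25} — is the last one: the isomorphism types of the $\theta^{-1}(d\Z)$ are \emph{not} detected by $\beta_1$ (which can stay equal to $1$), so one must invoke the rigidity of non-elementary torsion-free hyperbolic groups (if $H$ splits as a nontrivial free product, the Kurosh/Grushko complexity of $\theta^{-1}(d\Z)$ grows with $d$; if $H$ is one-ended, one argues as in \cite[Theorem 1.6]{LWZ25}). Everything else is formal, resting only on the trivial-center and virtually-cyclic-centralizer properties; the passage from automorphisms (the case treated in \cite{LWZ25}) to endomorphisms costs nothing beyond permitting the non-invertible building blocks $\alpha,u,v$ above.
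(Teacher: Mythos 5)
Your proof follows the same skeleton as the paper's: decompose $\edo(G)$ according to whether the $H$-component of $\Psi|_{\Z^k}$ is trivial (your $\beta$; the paper's type (1) vs.\ type (2) in Proposition \ref{des of hyperbolic-end}), deduce $\fix\Psi\cong N\times\Z^j$ or $\fix\Psi\cong\Z^s$ ($s\le k+1$), get the ``if'' direction from $\beta_1(H)=0\Rightarrow u=0$ (the paper's Corollary \ref{cor betti=0}), and for the converse construct endomorphisms whose fixed subgroups are finite-index subgroups $\theta^{-1}(d\Z)$ of $H$ of every index $d$. Your single family $\phi_m$ in fact unifies the paper's two separate constructions (Proposition \ref{lwz25 prop 6.1 (2)} for $k\ge 2$ and Proposition \ref{prop end-fixed subgps of H times Z} for $k=1$), and your cancellation remark is a correct, if slightly heavier, substitute for the paper's trick of taking $\textbf{Q}=\textbf{0}$ so that $\fix\Psi\cong\fix\phi$ with no $\Z^k$ factor to cancel.

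The one genuine gap is exactly the step you flag and then ``grant'': that the subgroups $\theta^{-1}(d\Z)\le H$ realize infinitely many isomorphism types as $d$ varies. Your proposed fallbacks do not close it: the Kurosh/Grushko count only covers the free-product case, and ``argue as in \cite[Theorem 1.6]{LWZ25}'' is not an argument for the one-ended case. The paper closes this gap by citing Lazarovich's theorem \cite{La25} (Theorem \ref{finite index rigid}) that every non-elementary hyperbolic group is \emph{finite index rigid}, i.e.\ it contains no pair of isomorphic finite-index subgroups of distinct indices; since $[H:\theta^{-1}(d\Z)]=d$ and each such subgroup is again non-elementary torsion-free hyperbolic (hence centerless, so your cancellation remark applies), the fixed subgroups $\theta^{-1}(d\Z)\times\Z^{k-1}$ are pairwise non-isomorphic. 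With that citation inserted, your argument is complete and otherwise matches the paper's.
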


\begin{rem}
    Shor \cite{Sh99} showed every hyperbolic group contains, up to isomorphism, only finitely many aut-fixed subgroups. Lei and Zhang \cite{LZ23} extended this result and showed every hyperbolic group contains, up to isomorphism, only finitely many fixed subgroups of monomorphisms. However, the question of whether the same result holds for endomorphisms remains open.
\end{rem}

The paper is organized as follows. In Section \ref{sect 2}, we introduce some important facts on subgroups of $F_n\times \Z^m$ and $\pi_1(\Sigma_g)\times \Z^m$. In Section \ref{sect 3}, we prove the main results for end-fixed subgroups in $F_n\times \Z^m$. In Section \ref{sect 4}, we first study endomorphisms of $\pi_1(\Sigma_g)\times \Z^m$, and then prove Theorem \ref{main thm 2-surf} and Theorem \ref{main thm 4-Z surf}. Finally, in Section \ref{sect 5}, we obtain some results about the endomorphisms of free-abelian times hyperbolic groups.

\section{Preliminary}\label{sect 2}

For later use, we review some facts on subgroups of $F_g\times \Z^k$ and $\pi_1(\Sigma_g)\times \Z^k$ in this section. See \cite[Section 2]{LWZ25} for some proofs and more details.

\begin{lem}[Schreier's formula]\label{F_2 subgp}
Let $H$ be a subgroup of $F_g$ ($g\geq 2$) with index $[F_g : H]=m$. Then $H\cong F_{m(g-1)+1}.$
Moreover, if $H$ is nontrivial and normal with index infinite, then $H\cong F_{\aleph_0}$.
\end{lem}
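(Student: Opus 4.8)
The plan is to obtain the index-$m$ case from the Nielsen--Schreier theorem together with an Euler-characteristic count, and to handle the normal case by reducing it, via the same theorem, to the classical fact that a non-trivial finitely generated normal subgroup of a free group has finite index.

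For the first assertion, realize $F_g=\pi_1(R_g)$, where $R_g$ is the graph with one vertex and $g$ loops. A subgroup $H\leq F_g$ of index $m$ is $\pi_1$ of the connected $m$-sheeted cover $\widetilde R\to R_g$; as $\widetilde R$ is a connected graph with $m$ vertices and $mg$ edges, $\chi(\widetilde R)=m-mg$, and a connected graph is homotopy equivalent to a wedge of $1-\chi$ circles, so $H\cong\pi_1(\widetilde R)\cong F_{1-(m-mg)}=F_{m(g-1)+1}$. (Equivalently, the algebraic route: fixing a Schreier transversal $T$ for $H$, the group $H$ is freely generated by the non-trivial elements among the $mg$ Schreier generators $tx\,\overline{tx}^{-1}$ with $t\in T$ and $x$ a free generator of $F_g$; exactly $m-1$ of these are trivial, namely those corresponding to the tree edges of the coset graph, which leaves $mg-(m-1)=m(g-1)+1$ free generators.)

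For the ``moreover'', Nielsen--Schreier gives that $H$ is free, and since $F_g$ is countable so is $H$, hence $H\cong F_\kappa$ for some cardinal $1\leq\kappa\leq\aleph_0$; it remains only to rule out $\kappa$ finite. Suppose $H$ were finitely generated. By Marshall Hall's theorem, $H$ is a free factor of some finite-index subgroup $K\leq F_g$, say $K=H*L$; since $H$ is normal in $F_g$ it is in particular normal in $K$, and a non-trivial free factor of a free product is self-normalizing (e.g.\ by looking at the fixed-point set of $H$ on the Bass--Serre tree of $H*L$), so $L$ must be trivial and $K=H$. Then $[F_g:H]=[F_g:K]<\infty$, contradicting the hypothesis. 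Hence $\kappa=\aleph_0$ and $H\cong F_{\aleph_0}$. (Alternatively one can argue directly with the regular covering $\widetilde R\to R_g$: finite generation of $H$ makes its core graph finite and non-empty, while the deck group $F_g/H$ acts freely on that core, forcing $F_g/H$ to be finite; or one can simply quote Greenberg's theorem.)

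The index-$m$ formula is routine bookkeeping; the only real content is the last step, the finiteness of the index of a non-trivial finitely generated normal subgroup, so that is where I would be careful (and where a citation would suffice).
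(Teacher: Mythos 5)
Your proof is correct. Note, however, that the paper offers no proof of this lemma at all: it is stated as a classical preliminary (Schreier's index formula plus the standard fact about infinite-index nontrivial normal subgroups), with the reader referred to \cite{LWZ25} and the literature, so there is no argument in the paper to compare yours against. Your derivation is the standard one: Nielsen--Schreier plus the Euler-characteristic (or Schreier-transversal) count for the index-$m$ case, and for the ``moreover'' part the reduction to the fact that a nontrivial finitely generated normal subgroup of $F_g$ has finite index, which you justify correctly via Marshall Hall's theorem together with the self-normalizing property of a nontrivial proper free factor (edge stabilizers in the Bass--Serre tree of $H*L$ being trivial). That last step is indeed the only point with real content, and a citation to Greenberg or Karrass--Solitar would equally suffice there.
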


\begin{lem}[Griffiths, \cite{Gr63}]\label{subgp of surface}
Let $H$ be a subgroup of $\pi_1(\Sigma_g)$ ($g\geq 2$). Then
\begin{enumerate}
  \item $H\cong F_t$ ($t\geq 0$) or $F_{\aleph_0}$, if the index $[\pi_1(\Sigma_g):H]=\infty$;
  \item $H\cong \pi_1(\Sigma_{m(g-1)+1})$, if the index $[\pi_1(\Sigma_g):H]=m<\infty$;
  \item $H$ has finite index in $\pi_1(\Sigma_g)$, if $H$ is nontrivial, finitely generated and normal in
$\pi_1(\Sigma_g)$;
\item $H\cong F_{\aleph_0}$, if $H$ is nontrivial and normal with index infinite in
$\pi_1(\Sigma_g)$.
\end{enumerate}
\end{lem}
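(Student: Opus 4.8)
The plan is to argue topologically, via the correspondence between subgroups of $\pi_1(\Sigma_g)$ and covering spaces of $\Sigma_g$, using throughout that every covering space of a surface is again a surface and that the index $[\pi_1(\Sigma_g):H]$ equals the number of sheets of the cover $p\colon\widetilde\Sigma\to\Sigma_g$ associated to $H$; thus finite index corresponds to a compact cover and infinite index to a non-compact one. Since $\Sigma_g$ is a closed orientable surface with $g\geq 2$, it carries a hyperbolic metric, and I will invoke the induced Fuchsian structure to handle the two normal-subgroup statements.

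For Item (1) I would assume $[\pi_1(\Sigma_g):H]=\infty$, so that $\widetilde\Sigma$ is a non-compact surface without boundary. Every such surface deformation retracts onto a $1$-dimensional spine, hence $H\cong\pi_1(\widetilde\Sigma)$ is free; since $H$ is a subgroup of the countable group $\pi_1(\Sigma_g)$, its rank is a countable cardinal, giving $H\cong F_t$ with $t\geq 0$ finite or $H\cong F_{\aleph_0}$. For Item (2) I would assume $[\pi_1(\Sigma_g):H]=m<\infty$, so $\widetilde\Sigma$ is compact; being a finite cover of the closed orientable surface $\Sigma_g$, it is itself closed and orientable, say $\widetilde\Sigma\cong\Sigma_h$. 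Multiplicativity of the Euler characteristic in finite covers then yields $2-2h=\chi(\Sigma_h)=m\,\chi(\Sigma_g)=m(2-2g)$, whence $h=m(g-1)+1$ and $H\cong\pi_1(\Sigma_{m(g-1)+1})$.

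The heart of the argument is Item (3), which I regard as the main obstacle: a nontrivial finitely generated normal subgroup $H$ must have finite index. I would argue by contradiction, assuming $[\pi_1(\Sigma_g):H]=\infty$. Realize $G:=\pi_1(\Sigma_g)$ as a cocompact Fuchsian group acting on $\mathbb{H}^2$ with limit set all of $\partial\mathbb{H}^2\cong S^1$; since $G$ is non-elementary and cocompact, its action on $S^1$ is minimal. As $G$ is torsion-free, any nontrivial element of $H$ has infinite order, so the limit set $\Lambda(H)$ is nonempty; it is closed, and it is $G$-invariant because $g\,\Lambda(H)=\Lambda(gHg^{-1})=\Lambda(H)$ by normality. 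Minimality then forces $\Lambda(H)=S^1$. A finitely generated Fuchsian group with full limit set has finite coarea, and a finite-coarea subgroup of the cocompact lattice $G$ is necessarily cocompact, hence of finite index in $G$ — contradicting the standing assumption. The delicate points are precisely the facts that a finitely generated Fuchsian group with full limit set is a lattice and that a finite-coarea subgroup of a cocompact lattice has finite index; these are the inputs I expect to require the most care to cite or justify.

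Finally, Item (4) would follow by combining the previous parts. If $H$ is nontrivial, normal, and of infinite index, then Item (1) shows $H$ is free of countable rank, while the contrapositive of Item (3) rules out finite rank, since a nontrivial finitely generated normal subgroup would have finite index. Hence $H\cong F_{\aleph_0}$, which completes the classification.
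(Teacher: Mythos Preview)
The paper does not prove this lemma; it is stated as a cited result from Griffiths \cite{Gr63} (with a pointer to \cite{LWZ25} for further details), so there is no in-paper argument to compare against. Your outline is a correct and standard route: covering-space theory handles (1) and (2) directly via the spine retraction for non-compact surfaces and Euler-characteristic multiplicativity for finite covers, and your Fuchsian argument for (3) is valid---normality forces $\Lambda(H)=\partial\mathbb{H}^2$ by minimality, a finitely generated Fuchsian group of the first kind is a lattice, and covolume multiplicativity $\mathrm{covol}(H)=[G:H]\cdot\mathrm{covol}(G)$ with $\mathrm{covol}(G)=2\pi(2g-2)>0$ then forces $[G:H]<\infty$. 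The two facts you flag as delicate are indeed the substantive inputs, but both are classical (see, e.g., Beardon or Katok on Fuchsian groups); once granted, (4) follows exactly as you say.
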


Delgado and Ventura \cite[Corollary 1.7]{DV13} gave a complete characterization of subgroups of $F_g\times \Z^k$ as follows.

\begin{lem}[Delgado-Ventura, \cite{DV13}]\label{DV subgp}
Let $G=F_g\times \Z^k$ ($g\geq 2, k\geq 1$). Then

(1) Every finitely generated subgroup of $G$ has the following form:
$$F_t\times \Z^s, ~~t\geq 0, ~0\leq s\leq k;$$

(2) Every infinitely generated subgroup of $G$ has the following form:
$$F_{\aleph_0}\times \Z^s, ~~ ~0\leq s\leq k.$$
\end{lem}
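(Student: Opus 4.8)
The plan is to cite the structural result of Delgado and Ventura directly, but since this is meant to be self-contained enough for the reader, I would reconstruct the argument as follows. Let $G = F_g \times \Z^k$ and let $\pi\colon G \to F_g$ be the projection onto the first factor, with kernel $\Z^k$. Given any subgroup $H \le G$, set $K = H \cap \Z^k = \ker(\pi|_H)$, a subgroup of $\Z^k$, hence free-abelian of some rank $s$ with $0 \le s \le k$; say $K \cong \Z^s$. The image $\pi(H)$ is a subgroup of the free group $F_g$, hence itself free, say $\pi(H) \cong F_r$ for some $0 \le r \le \infty$ (allowing $r = \aleph_0$ when $\pi(H)$ is infinitely generated — this uses the Nielsen–Schreier theorem, in the form already recorded in Lemma~\ref{F_2 subgp}). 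We then have a short exact sequence $1 \to \Z^s \to H \xrightarrow{\pi|_H} F_r \to 1$.

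The key step is to show this sequence splits and, moreover, that the extension is a direct product, i.e. $H \cong F_r \times \Z^s$. Splitting is immediate because $F_r$ is free: choose a free basis of $\pi(H)$ and lift each basis element to an arbitrary preimage in $H$; this defines a section $\sigma\colon F_r \to H$. The nontrivial point is that the conjugation action of $\sigma(F_r)$ on $K = \Z^s$ is trivial. This is where the ambient structure enters: $\Z^k$ is central in $G = F_g \times \Z^k$, so in particular $K$ is centralized by every element of $H$, hence by $\sigma(F_r)$. Therefore the extension is central and split, giving $H = \sigma(F_r) \cdot K$ with $\sigma(F_r) \cap K = 1$ and the two factors commuting elementwise, so $H \cong F_r \times \Z^s$. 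This proves (1) when $r$ is finite and (2) when $r = \aleph_0$; in the latter case $\pi(H)$ infinitely generated forces $H$ infinitely generated, and conversely if $H$ is infinitely generated then, since $\Z^s$ is finitely generated, $\pi(H) \cong H/K$ must be infinitely generated, so $r = \aleph_0$.

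The main obstacle — really the only subtlety — is making sure the ranks land in the claimed ranges and that the free-group quotient is genuinely free of at most countable rank; both are handled by the Nielsen–Schreier theorem as recalled in Lemma~\ref{F_2 subgp}, noting that a subgroup of a finitely generated free group is either finitely generated free or free of countably infinite rank. Everything else is the standard fact that a central extension of a free group by an abelian group splits as a direct product, which is a one-line consequence of freeness plus centrality. I would simply remark that the full statement is \cite[Corollary~1.7]{DV13} and give the short argument above for completeness.
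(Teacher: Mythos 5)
Your argument is correct. Note that the paper does not actually prove this lemma---it is quoted verbatim as \cite[Corollary 1.7]{DV13}---so there is no in-paper proof to compare against; but your reconstruction (project to the free factor, observe that the kernel $H\cap\Z^k\cong\Z^s$ is central because $\Z^k$ is central in $G$, split the sequence using freeness of the image, and conclude the extension is a direct product) is the standard argument and is exactly the technique the authors themselves deploy later, in the proof of Theorem \ref{Thm 3.1}, to show $\fix\Psi\cong p(\fix\Psi)\times\Z^s$. The only point worth flagging is that the lemma as stated is purely a ``forward'' classification (every subgroup has one of these forms), which is all you prove; the converse realizability of each form is not asserted here and is not needed.
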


There is a parallel characterization of subgroups of $\pi_1(\Sigma_g)\times \Z^k$ in \cite[Lemma 2.4]{LWZ25}.

\begin{lem}[Lei-Wang-Zhang, \cite{LWZ25}]\label{surface subgp}
Let $G=\pi_1(\Sigma_g)\times \Z^k$ ($g\geq 2, k\geq 1$). Then

(1) Every finitely generated subgroup of $G$ has one of the following forms:
$$F_t\times \Z^s, ~\quad \pi_1(\Sigma_{m(g-1)+1})\times \Z^s, \quad t\geq 0, ~m\geq 1, ~0\leq s\leq k; $$

(2) Every infinitely generated subgroup of $G$ has the following form:
$$F_{\aleph_0}\times \Z^s, ~~ ~0\leq s\leq k.$$
\end{lem}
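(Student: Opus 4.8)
The plan is to analyze $H$ through its projection to the surface factor and realize it as a \emph{split} central extension. Write $G=\pi_1(\Sigma_g)\times\Z^k$, let $p_1\colon G\to\pi_1(\Sigma_g)$ and $p_2\colon G\to\Z^k$ be the coordinate projections, and set $P:=p_1(H)\leq\pi_1(\Sigma_g)$ and $Z_H:=H\cap(1\times\Z^k)$. Since $Z_H$ is a subgroup of $\Z^k$, it is free-abelian of some rank $s$ with $0\leq s\leq k$, and since $1\times\Z^k$ is central in $G$, the subgroup $Z_H$ is central in $H$. The restriction of $p_1$ to $H$ is surjective onto $P$ with kernel exactly $Z_H$, giving a central extension
$$1\longrightarrow Z_H\longrightarrow H\stackrel{p_1}{\longrightarrow}P\longrightarrow 1.$$
First I would classify $P$ using Griffiths' Lemma \ref{subgp of surface}: either $[\pi_1(\Sigma_g):P]=\infty$, in which case $P\cong F_t$ or $P\cong F_{\aleph_0}$; or $[\pi_1(\Sigma_g):P]=m<\infty$, in which case $P\cong\pi_1(\Sigma_{m(g-1)+1})$ with $m\geq 1$.

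The key step is to show this extension splits, so that $H\cong P\times Z_H\cong P\times\Z^s$. I would construct a section $\sigma\colon P\to H$ of $p_1$ directly by lifting a generating set. Choose generators of $P$ and, for each, a preimage in $H$ under $p_1$; every such preimage has the form $(w,v)$ with $w$ the chosen generator and $v\in\Z^k$. When $P$ is free (finite or countable rank) there are no relations to check, so these lifts extend to a homomorphism $\sigma\colon P\to H$ automatically. When $P\cong\pi_1(\Sigma_{g'})$ with $g'=m(g-1)+1$, the only relation is the surface relator $\prod_{i=1}^{g'}[a_i,b_i]$; here the crucial observation is that for lifts $\tilde a_i=(a_i,\alpha_i)$, $\tilde b_i=(b_i,\beta_i)$ one has $[\tilde a_i,\tilde b_i]=([a_i,b_i],0)$, because the second coordinates lie in the abelian group $\Z^k$, whence $\prod_i[\tilde a_i,\tilde b_i]=(\prod_i[a_i,b_i],0)=(1,0)$ in $H$. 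Thus the chosen lifts satisfy the defining relator of $\pi_1(\Sigma_{g'})$ and again extend to a homomorphism $\sigma\colon P\to H$. In all cases $p_1\circ\sigma=\id_P$, so $\sigma$ is a section.

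Given the section, I would set $K:=\sigma(P)$ and verify $H=K\cdot Z_H$, $K\cap Z_H=1$, and that $Z_H$ is central, yielding $H=K\times Z_H\cong P\times\Z^s$. To read off the stated normal forms: if $H$ is finitely generated then $P\cong H/Z_H$ is finitely generated, so Griffiths forces $P\cong F_t$ $(t\geq 0)$ or $P\cong\pi_1(\Sigma_{m(g-1)+1})$ $(m\geq 1)$, giving the two families in~(1); if $H$ is infinitely generated then, since $Z_H$ is finitely generated, $P$ must be infinitely generated, hence of infinite index and free, so $P\cong F_{\aleph_0}$, giving $H\cong F_{\aleph_0}\times\Z^s$ as in~(2). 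The range $0\leq s\leq k$ comes from $Z_H\leq\Z^k$.

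The main obstacle—indeed the only place the direct-product structure of $G$ is genuinely used—is the splitting in the surface case: a priori a central extension of $\pi_1(\Sigma_{g'})$ by $\Z^s$ need not split (its isomorphism classes form $H^2(\pi_1(\Sigma_{g'});\Z^s)\cong\Z^s$), and a non-split one would produce a group not isomorphic to any $\pi_1(\Sigma_{g'})\times\Z^s$. What rescues us is precisely that $H$ embeds in a direct product: the commutators in the relator vanish in the central $\Z^k$-coordinate, so the lifts satisfy the surface relation on the nose. I would present this relator computation carefully, as it is the crux of the argument; the free case could alternatively be deduced from the Delgado–Ventura classification in Lemma \ref{DV subgp}.
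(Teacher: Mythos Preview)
Your argument is correct and follows the same line as the cited proof in \cite[Lemma~2.4]{LWZ25}: the present paper does not reprove the lemma but merely quotes it, and later (in the proof of Theorem~\ref{thm for surf>2}) explicitly invokes ``the proof of \cite[Lemma~2.4]{LWZ25}'' for the fact that the short exact sequence $0\to\ker(p_\Psi)\hookrightarrow\fix\Psi\to p(\fix\Psi)\to 1$ splits. Your construction of the section via commutator vanishing in the central $\Z^k$-factor is exactly the mechanism behind that splitting, so your approach matches the intended one.
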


\begin{rem}
For a direct product of general absolute groups, the structure of its subgroups is more complicated. See \cite{Bri23, BH07, BHMS09} for some deep results.
\end{rem}


\section{Fixed subgroups in $F_n\times\Z^m$}\label{sect 3}

The aim of this section is to prove Theorem \ref{main thm 1-free} and Theorem \ref{main thm 3-Z free}. Let
$$G=F_n\times\mathbb{Z}^m  = \left\langle x_1,\dots,x_n, t_1,\dots,t_m \mid t_it_j=t_jt_i, ~t_ix_k=x_kt_i \right\rangle .$$
Then every element in $G $ has a normal form:
$$u\textbf{t}^\textbf{a} =u(x_1,\dots,x_n)t^{a_1}_1 \cdots t^{a_m}_m,$$
where $\textbf{a} = (a_1,\dots,a_m)\in\mathbb{Z}^m $ is a row vector, and $u=u(x_1,\dots, x_n) $ is a freely reduced word on the
alphabet $X=\{x_1,\dots,x_n\}$.

Both in this section and the following section, for a word $u\in F_n$, let the bold lowercase letter $\textbf{u}$ denote its abelianization, and $\textbf{u}^T$ the transpose of the row vector $\textbf{u}$. For example, if $u=u(x_1,x_2)=x_1x_2x_1\in F_2$, then $\textbf{u}=(2,1)\in\mathbb{Z}^2$. Moreover, let $\mathcal{M}_{m\times n}(\Z)$ and $\mathcal{M}_m(\Z)$ denote the sets of all integer matrices of sizes $n\times m$ and $m\times m$, respectively.

\subsection{Endomorphisms}

Delgado and Ventura studied endomorphisms of $G$ and gave the following proposition \cite[Proposition 5.1]{DV13}.

\begin{prop}[Delgado-Ventura, \cite{DV13}]\label{DV endo-prop}
Let $G=F_n\times\mathbb{Z}^m $ with $n\ne 1$. The following is a complete list of all endomorphisms of $ G $:
	\begin{enumerate}
		\item $ \Psi_{\phi,\textbf{Q},\textbf{P}}=u\textbf{t}^\textbf{a} \mapsto \phi(u)\textbf{t}^{\textbf{aQ}+\textbf{uP}} $, where $ \phi\in \edo(F_n), \textbf{Q}\in \mathcal{M}_m(\mathbb{Z}),$ and $ \textbf{P}\in \mathcal{M}_{n\times m}(\mathbb{Z})$.
		
		\item$\Psi_{z,\textbf{l,h,Q,P}}=u\textbf{t}^\textbf{a}\mapsto z^{\textbf{al}^T+\textbf{uh}^T}\textbf{t}^{\textbf{aQ}+\textbf{uP}}$, where $1\ne z\in F_n$ is not a proper power, $\textbf{Q}\in \mathcal{M}_m(\mathbb{Z}) $, $ \textbf{P}\in \mathcal{M}_{n\times m}(\mathbb{Z}),\textbf{0}\ne \textbf{l} \in \mathbb{Z}^m$, and $\textbf{h}\in \mathbb{Z}^n$.
	\end{enumerate}
	In both cases, $\textbf{u}\in \mathbb{Z}^n$ denotes the abelianization of the word $ u \in F_n $.	
\end{prop}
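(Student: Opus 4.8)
The plan is to analyse an arbitrary $\Psi\in\edo(G)$ through its values on the generators, split into two cases according to whether the image of the free part $F_n$ is abelian, read off the two normal forms, and then verify the (routine) converse that each listed assignment really is an endomorphism.

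First I would record that, since $G$ is generated by $x_1,\dots,x_n,t_1,\dots,t_m$, the map $\Psi$ is determined by $\Psi(x_i)=w_i\mathbf{t}^{\mathbf{c}_i}$ and $\Psi(t_j)=v_j\mathbf{t}^{\mathbf{d}_j}$ with $w_i,v_j\in F_n$ and $\mathbf{c}_i,\mathbf{d}_j\in\Z^m$. The defining relations say each $t_j$ is central in $G$, so each $v_j\mathbf{t}^{\mathbf{d}_j}$ commutes with every $\Psi(x_i)$ and every $\Psi(t_k)$; projecting onto $F_n$ this forces $v_j$ to commute in $F_n$ with every $w_i$. Put $K=\langle w_1,\dots,w_n\rangle\le F_n$. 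Here I invoke the classical structure of centralizers in a free group: every subgroup of $F_n$ is free, the centralizer of a nontrivial element is the unique (infinite cyclic) maximal cyclic subgroup containing it, and hence the centralizer of any non-cyclic subgroup is trivial. The case split is on whether $K$ is abelian.

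If $K$ is non-abelian, then each $v_j$ centralizes $K$ and so $v_j=1$, giving $\Psi(t_j)=\mathbf{t}^{\mathbf{d}_j}\in\Z^m$. Define $\phi\in\edo(F_n)$ by $\phi(x_i)=w_i$ (legitimate because $F_n$ is free), let $\mathbf{P}\in\mathcal{M}_{n\times m}(\Z)$ be the matrix with $i$-th row $\mathbf{c}_i$ and $\mathbf{Q}\in\mathcal{M}_m(\Z)$ the one with $j$-th row $\mathbf{d}_j$. Since the $\mathbf{t}$-letters are central, expanding $\Psi(u\mathbf{t}^{\mathbf{a}})=u\bigl(w_1\mathbf{t}^{\mathbf{c}_1},\dots,w_n\mathbf{t}^{\mathbf{c}_n}\bigr)\,\prod_j\mathbf{t}^{a_j\mathbf{d}_j}$ and collecting all central letters on the right yields $F_n$-part $u(w_1,\dots,w_n)=\phi(u)$ and $\Z^m$-part $\sum_i u_i\mathbf{c}_i+\sum_j a_j\mathbf{d}_j=\mathbf{u}\mathbf{P}+\mathbf{a}\mathbf{Q}$, which is exactly form (1). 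If instead $K$ is abelian, it is cyclic; when $K=1$ we again land in form (1) with $\phi$ the trivial endomorphism, and otherwise $K$ is contained in a maximal cyclic subgroup $\langle z\rangle$ with $z\ne1$ not a proper power, so $w_i=z^{h_i}$ for all $i$, while each $v_j$, centralizing the nontrivial elements of $K$, lies in $\langle z\rangle$, say $v_j=z^{l_j}$. Now $z$ and all $\mathbf{t}$-letters commute pairwise, so the same expansion collapses to $z^{\mathbf{u}\mathbf{h}^T+\mathbf{a}\mathbf{l}^T}\mathbf{t}^{\mathbf{u}\mathbf{P}+\mathbf{a}\mathbf{Q}}$ with $\mathbf{h}=(h_1,\dots,h_n)$ and $\mathbf{l}=(l_1,\dots,l_m)$ — this is form (2) when $\mathbf{l}\ne\mathbf{0}$, and once more form (1) (with $\phi\colon u\mapsto z^{\mathbf{u}\mathbf{h}^T}$) when $\mathbf{l}=\mathbf{0}$. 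The assumption $n\ne1$ is used only to guarantee that $F_n$ is either trivial or non-abelian, so the two cases exhaust all possibilities; the degenerate case $n=0$ is immediate.

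For the converse, each assignment in the list is easily seen to be well defined: freeness of $F_n$ makes the free-coordinate rules consistent, and in both (1) and (2) the images of all $t_j$ (and, in (2), of all $x_i$) lie in an abelian subgroup of $G$ — namely $\Z^m$, respectively $\langle z\rangle\times\Z^m$ — so the relations $t_it_j=t_jt_i$ and $t_ix_k=x_kt_i$ are preserved and the assignment extends to a homomorphism $G\to G$. I expect the only genuine work to be the forward direction: correctly isolating the abelian versus non-abelian dichotomy for $K$ and applying the centralizer structure of free groups to pin down the $v_j$. Once the images of the central generators $t_j$ are controlled, the explicit formulas fall out of the straightforward ``push the central letters to the right'' computation.
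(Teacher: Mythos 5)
Your overall strategy --- determining $\Psi$ by its values on the generators, using centrality of the $t_j$ to constrain the $F_n$-parts of $\Psi(t_j)$ via the centralizer structure of free groups, and reading off the two normal forms --- is exactly the argument the paper gives for the parallel surface-group statement (Proposition \ref{des of surfend}); for the free-group case itself the paper only cites Delgado--Ventura. However, your case division has a genuine flaw. You split on whether $K=\langle w_1,\dots,w_n\rangle$ is abelian and assert that the sub-case $K=1$ ``lands in form (1) with $\phi$ the trivial endomorphism.'' That is false when $K=1$ but some $v_j\neq 1$: then $\Psi(t_j)=v_j\textbf{t}^{\textbf{d}_j}$ has nontrivial $F_n$-component, whereas every map of form (1) sends $t_j$ into $\mathbb{Z}^m$. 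Such endomorphisms do exist (take $\Psi(x_i)=1$ for all $i$ and $\Psi(t_1)=x_1$, $\Psi(t_j)=1$ for $j>1$); they are of form (2) with $\textbf{h}=\textbf{0}$ and $\textbf{l}\neq\textbf{0}$, not of form (1), so as written your proof places them in the wrong class and gives no argument that they are covered at all. The repair is the case split used in the proof of Proposition \ref{des of surfend}: distinguish according to whether all the $F_n$-parts $v_j$ of the $\Psi(t_j)$ are trivial. If they are, you get form (1) directly; if some $v_j\neq 1$, the commutation relations force $\langle v_1,\dots,v_m,w_1,\dots,w_n\rangle$ to be a nontrivial abelian, hence cyclic, subgroup $\langle z\rangle$ of $F_n$, which yields form (2) with $\textbf{l}\neq\textbf{0}$. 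Everything else in your write-up --- the centralizer facts, the computation pushing the central letters to the right, and the verification of the converse --- is correct.
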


\subsection{Fixed subgroups}
Now we have the following.

\begin{thm}\label{Thm 3.1}
Let $\Psi$ be an endomorphism of $F_n\times\mathbb{Z}^m $. Then
$$\fix\Psi \not\cong F_{\aleph_0}, ~ F_g\times\mathbb{Z}^m (g> n).$$
\end{thm}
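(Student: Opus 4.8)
The plan is to analyze the two families of endomorphisms from Proposition \ref{DV endo-prop} separately and show that in each case $\fix\Psi$ cannot be $F_{\aleph_0}$ or $F_g\times\Z^m$ with $g>n$. First I would dispose of the rank bound on the free part: since $\fix\Psi$ is a subgroup of $F_n\times\Z^m$, by Lemma \ref{DV subgp} it is isomorphic to $F_t\times\Z^s$ or $F_{\aleph_0}\times\Z^s$ with $0\le s\le m$. So the content is to rule out (a) $\fix\Psi\cong F_{\aleph_0}$ (i.e. $s=0$ and infinite rank) and (b) $\fix\Psi\cong F_g\times\Z^m$ with $g>n$ (i.e. full torsion-free rank $m$ in the abelian direction but free rank exceeding $n$).

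For case (1), $\Psi=\Psi_{\phi,\mathbf Q,\mathbf P}$, I would compute $\fix\Psi$ explicitly. An element $u\mathbf t^{\mathbf a}$ is fixed iff $\phi(u)=u$ and $\mathbf a\mathbf Q+\mathbf u\mathbf P=\mathbf a$, i.e. $\mathbf a(\mathbf Q-I)=-\mathbf u\mathbf P$. The first condition forces $u\in\fix\phi$, which by the Imrich–Turner theorem has rank $\le n$; the projection of $\fix\Psi$ to the $F_n$ factor lands in $\fix\phi$. The set of fixed elements therefore fibers over (a subgroup of) $\fix\phi$ with fiber over $u=1$ equal to $\{\mathbf t^{\mathbf a}: \mathbf a(\mathbf Q-I)=\mathbf 0\}\cong\Z^s$ where $s=\dim\ker(\mathbf Q-I)\le m$. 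This exhibits $\fix\Psi$ as an extension of a subgroup of $\fix\phi$ (rank $\le n$, since subgroups of finite-rank free groups have rank $\le$ that rank) by $\Z^s$; hence $\fix\Psi$ is finitely generated with free rank $\le n$ and abelian rank $\le s$. In particular it is never $F_{\aleph_0}$, and if its abelian rank equals $m$ then $s=m$, forcing $\mathbf Q=I$, $\mathbf u\mathbf P=\mathbf 0$ for all fixed $u$; a short argument then bounds the free rank by $\rk(\fix\phi)\le n$, so $F_g\times\Z^m$ with $g>n$ cannot occur.

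For case (2), $\Psi=\Psi_{z,\mathbf l,\mathbf h,\mathbf Q,\mathbf P}$, note the image of $\Psi$ lies in $\langle z\rangle\times\Z^m\cong\Z^{m+1}$, which is abelian. Hence $\fix\Psi$, being a subgroup of the image, is abelian, so $\fix\Psi\cong\Z^s$ with $s\le m+1$; actually since $\fix\Psi\le F_n\times\Z^m$ and the $z$-coordinate of a fixed point is constrained, one gets $\fix\Psi\cong\Z^s$ with $s\le m$. This is neither $F_{\aleph_0}$ nor $F_g\times\Z^m$ with $g>n\ge 2$ (the latter is non-abelian). The main obstacle I anticipate is the bookkeeping in case (1): carefully identifying which pairs $(u,\mathbf a)$ with $u\in\fix\phi$ actually admit a solution $\mathbf a$ to $\mathbf a(\mathbf Q-I)=-\mathbf u\mathbf P$ — this is a consistency condition on the abelianization $\mathbf u$ of the fixed word — and then showing that even the "base" subgroup of $\fix\phi$ consisting of fixed words whose abelianization makes the equation solvable still has rank $\le n$, together with making the extension-by-$\Z^s$ argument precise enough to pin down the free rank when $s=m$. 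The free-group rank facts (Imrich–Turner, plus "subgroups of $F_n$ have rank $\le$ anything only when the index forces it" — more precisely that the retract-like projection image sits inside $\fix\phi$) are what keep the free rank from exceeding $n$.
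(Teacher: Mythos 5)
Your overall architecture matches the paper's: dispose of the type (2) endomorphisms of Proposition \ref{DV endo-prop} by noting the image is abelian, and for type (1) project $\fix\Psi$ onto the $F_n$ factor and analyse the resulting extension of $p(\fix\Psi)\leq\fix\phi$ by $\Z^s$. But there is a genuine gap in the type (1) analysis: you assert that ``subgroups of finite-rank free groups have rank $\leq$ that rank'' and use this to conclude that $\fix\Psi$ is finitely generated with free rank at most $n$. That assertion is false (the commutator subgroup of $F_2$ is $F_{\aleph_0}$, and a finite-index subgroup of $F_k$ has rank $j(k-1)+1$ by Schreier), and the conclusion you draw from it is false as well: the paper's Theorem \ref{main thm 3-Z free} exhibits endomorphisms of $F_n\times\Z$ whose fixed subgroup is $F_t$ with $t>n$, realized precisely as a proper finite-index subgroup of $\fix\phi\cong F_2$. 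So your argument proves too much; in particular it does not actually rule out $\fix\Psi\cong F_{\aleph_0}$, since a priori $p(\fix\Psi)$ could be an infinitely generated subgroup of $\fix\phi$.

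What is missing is exactly the resolution of the consistency condition $\textbf{a}(\textbf{I}-\textbf{Q})=\textbf{uP}$ that you flag as ``the main obstacle'' but leave open. The paper settles it by cases on $s$. When $s=m$ one gets $\textbf{Q}=\textbf{I}$, and $p(\fix\Psi)$ is the kernel of $u\mapsto\textbf{uP}$ on $\fix\phi$, hence a \emph{normal} subgroup of index $1$ or $\aleph_0$; Lemma \ref{F_2 subgp} then forces $p(\fix\Psi)\cong F_k\ (k\leq n)$, $F_{\aleph_0}$, or trivial, which is what excludes $F_g\times\Z^m$ with $n<g<\aleph_0$ --- normality, not a general rank bound, is doing the work. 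When $s=0$, the matrix $\textbf{I}-\textbf{Q}$ is nonsingular, and with $d=\det(\textbf{I}-\textbf{Q})$ one checks that $u^d\in p(\fix\Psi)$ for every $u\in\fix\phi$, so $p(\fix\Psi)$ contains $\langle u^d,[u,v]\mid u,v\in\fix\phi\rangle$ and therefore has finite index in $\fix\phi$; this yields finite (but possibly larger than $n$) rank, which suffices because plain $F_t$ with $t>n$ is not among the excluded isomorphism types. Your treatment of type (2) and the intermediate case $0<s<m$ is fine.
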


\begin{proof}
Since Imrich and Turner \cite{IT89} extended the celebrated result of Bestvina and Handel \cite{BH92} to $\rk(\fix\phi)\leq \rk(F_n)$ for every endomorphism $\phi$ of $F_n$, we have $\fix\phi\cong F_k$ for some $k\leq n$. Therefore, the conclusion is clear if $n=0,1$ or $m=0$.

Below, we assume that  $n\geq 2$ and $m\geq 1$.
Note that if the endomorphism $\Psi$ is of type (2) in Proposition \ref{DV endo-prop}, then the image of $\Psi$ is abelian, and hence $\fix\Psi$ is abelian. Therefore, $\fix\Psi$ is not isomorphic to $F_{\aleph_0}, ~F_g\times \mathbb{Z}^m\ (g> n)$.
Now we assume that $\Psi$ is of type (1), namely,
$$\Psi=\Psi_{\phi,\textbf{Q}, \textbf{P}}: u\textbf{t}^\textbf{a} \mapsto \phi(u)\textbf{t}^{\textbf{aQ}+\textbf{uP}}$$
where $\phi\in \edo(F_n), \textbf{Q}\in \mathcal{M}_m(\mathbb{Z})$ and $\textbf{P}\in \mathcal{M}_{n\times m}(\mathbb{Z})$.
Then, we have
\begin{eqnarray}\label{eq. fix Psi}
\fix\Psi=\{u\textbf{t}^\textbf{a}\mid\textbf{a}=\textbf{aQ}+\textbf{uP}, ~u=\phi(u)\}.
\end{eqnarray}

Now we consider the projection $$p:  F_n\times \Z^m\to  F_n, \quad u\textbf{t}^\textbf{a}\mapsto u,$$
and let $p_\Psi: \fix\Psi\to p(\fix\Psi)\leq F_n$ be the restriction of $p$ on $\fix\Psi$.
Then we have the natural short exact sequence
$$0\to \ker(p_\Psi)\hookrightarrow\fix\Psi \xrightarrow{p_\Psi} p(\fix\Psi)\to 1,$$
where
\begin{equation}\label{eq 5.5}
p(\fix\Psi)=\{u\in \fix\phi \mid \exists \textbf{a}=\textbf{aQ}+\textbf{uP} \}\vartriangleleft \fix\phi\cong F_k (k\leq n),
\end{equation}
is a normal subgroup of  $\fix\phi$, and
\begin{equation}\label{equ 6}
\ker(p_\Psi)=\{\textbf{t}^\textbf{a}\in \Z^m\mid\textbf{a}=\textbf{aQ}\}\cong \Z^s, ~s\leq m.
\end{equation}
Since $p(\fix\Psi)$ is free, we can define a monomorphism $\iota:p(\fix\Psi)\to \fix\Psi$
sending each element of a chosen free basis for $p(\fix\Psi)$ back to an arbitrary preimage, i.e., $p\comp \iota$ is the identity of $p(\fix\Psi)$. It implies  that
the above exact sequence is split. Note that $\ker(p_\Psi)\cong \Z^s$ is abelian, by straightforward calculations, we can see the following
map is an isomorphism:
\begin{eqnarray}
\Psi: \fix\Psi &\to& \iota(p(\fix\Psi))\times \ker(p_\Psi)\notag\\
 h&\mapsto& \big((\iota\comp p)(h), ~~h\cdot(\iota\comp p)(h^{-1})\big)\notag.
\end{eqnarray}
Therefore,
\begin{equation}\label{eq 5}
\fix\Psi=\iota(p(\fix\Psi))\times \ker(p_\Psi)\cong p(\fix\Psi)\times\Z^s (s\leq m).
\end{equation}

Note that $p(\fix\Psi)$ is free by Eq. (\ref{eq 5.5}), and hence the conclusion of Theorem \ref{Thm 3.1} is clear when $0<s<m$. Below we consider the remained two cases when $s=m$ or $0$.

Case ($s=m$): $\fix\Psi\cong p(\fix\Psi) \times \mathbb{Z}^m.$
By Eq. (\ref{equ 6}) and Eq. (\ref{eq 5}),
$$\ker(p_\Psi)=\{\textbf{t}^\textbf{a}\in \Z^m\mid\textbf{a}=\textbf{aQ}\}=\Z^m,$$
and hence \textbf{Q} = \textbf{I}. Thus, Eq. (\ref{eq 5.5}) becomes
$$p(\fix\Psi)=\{u\in \fix\phi \mid \textbf{uP}=\textbf{0} \},$$
which is the kernel of the epimorphism
$$\gamma: \fix\phi\to \gamma(\fix\phi)\leq \mathbb{Z}^m, \quad u\mapsto \textbf{uP},$$
and hence a normal group of $\fix\phi\cong F_k$ for $k\leq n$ with index
$$[\fix\phi: p(\fix\Psi)]=[F_k:\ker\gamma]=|\gamma(\fix\phi)|=1 ~or ~\aleph_0.$$
Therefore, $p(\fix\Psi)\cong F_k (k\leq n)$, $F_{\aleph_0},$ or trivial by Lemma \ref{F_2 subgp}, and hence
$$\fix\Psi\not\cong F_{\aleph_0}, ~F_g\times\mathbb{Z}^m (g> n).$$
		
Case ($s=0$): $\fix\Psi\cong p(\fix\Psi).$  In this case,
$$\ker(p_\Psi)=\{\textbf{t}^\textbf{a}\in \Z^m\mid\textbf{a}(\textbf{I}-\textbf{Q})=\textbf{0}\}=\textbf{0}.$$
Thus the integer matrix $\textbf{I}-\textbf{Q}$ has an inverse matrix $(\textbf{I}-\textbf{Q})^{-1}\in \mathcal{M}_m(\mathbb{\Q})$, and $d(\textbf{I}-\textbf{Q})^{-1}\in \mathcal{M}_m(\mathbb{Z})$ is an integer matrix for $d=\det(\textbf{I}-\textbf{Q})$. It implies that, for every $u\in \fix\phi$, the equation
$$\textbf{a}=\textbf{aQ}+d\textbf{uP}$$
in Eq. (\ref{eq 5.5}) always has a unique solution
$$\textbf{a}=d\textbf{uP}(\textbf{I}-\textbf{Q})^{-1}=\textbf{uP}(d(\textbf{I}-\textbf{Q})^{-1})\in \Z^m.$$
Note that $d\textbf{u}$ is the abelianization of $u^d\in \fix\phi$. Therefore, by Eq. (\ref{eq 5.5}), the subgroup
$$\Gamma_d:=\langle u^d,[u,v]\mid u,v\in\fix\phi\rangle \subset p(\fix\Psi)$$
has index
$$[\fix\phi : p(\fix\Psi)]\leq[\fix\phi : \Gamma_d]\leq d^k,k=\rk(\fix\phi).$$
Recall that $\fix\phi\cong F_k$ for some $k\leq n$, and hence $p(\fix\Psi)$ is a free group of finite rank. So
$$\fix\Psi\cong p(\fix\Psi)\not\cong F_{\aleph_0}, ~F_g\times\mathbb{Z}^m (g> n).$$
The proof is finished.
\end{proof}

\subsection{Proofs of Theorem \ref{main thm 1-free} and Theorem \ref{main thm 3-Z free}}

\begin{proof}[\textbf{Proof of Theorem \ref{main thm 1-free}}]
Here, Item (3) and Item (4) are equivalent by Lemma \ref{DV subgp}. Item (1) implies Item (2) clearly, Item (2) implies Item (3) by Theorem \ref{Thm 3.1}, and \cite[Theorem 1.3]{LWZ25} showed that Item (1) and Item (3) are equivalent. So the three items are all equivalent.
\end{proof}

Now we consider the special case $F_n \times \mathbb{Z}(n \geq 2)$. The structure of its aut-fixed subgroups has been studied in \cite[Theorem 4.3]{LWZ25}.

\begin{thm}[Lei-Wang-Zhang, \cite{LWZ25}]\label{LWZ aut-fixed free-Z}
	A subgroup of $F_n \times \mathbb{Z}(n \geq 2)$ is aut-fixed up to isomorphism if and only if it has one of the following forms:
	$$F_{2t-1}(1 \leq t \leq n),\ \ \ F_t \times \mathbb{Z}^s(0 \leq t \leq n, s=0,1),\ \ \ or \ \ \ F_{\aleph_0} \times \mathbb{Z}.$$
\end{thm}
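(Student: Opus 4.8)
The plan is to combine the classification of the automorphisms of $G=F_n\times\Z$ (which falls out of Proposition \ref{DV endo-prop}) with Schreier's formula (Lemma \ref{F_2 subgp}) and the Bestvina--Handel bound $\rk(\fix\phi)\leq n$ for $\phi\in\aut(F_n)$. First I would note that for $n\geq2$ no automorphism of $G$ is of type (2) in Proposition \ref{DV endo-prop}, since such an endomorphism has abelian image while $G$ is non-abelian; and a type-(1) endomorphism $\Psi_{\phi,\textbf{Q},\textbf{P}}$ is an automorphism precisely when $\phi\in\aut(F_n)$ and $\textbf{Q}\in GL_1(\Z)=\{\pm1\}$ (one writes the inverse down directly, or cites \cite{DV13}). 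Thus every automorphism of $G$ has the form $\Psi_{\phi,\eps,\textbf{P}}\colon u\textbf{t}^a\mapsto\phi(u)\textbf{t}^{\eps a+\textbf{uP}}$ with $\phi\in\aut(F_n)$, $\eps\in\{\pm1\}$, and $\textbf{P}\in\Z^n$.

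Next, paralleling the fixed-subgroup computation in the proof of Theorem \ref{Thm 3.1}, I would split on the sign $\eps$, using that $u\textbf{t}^a\in\fix\Psi$ iff $u\in\fix\phi$ and $(1-\eps)a=\textbf{uP}$. If $\eps=1$, then $\fix\Psi=K\times\langle\textbf{t}\rangle$, where $K=\{u\in\fix\phi:\textbf{uP}=0\}$ is the kernel of the homomorphism $\fix\phi\to\Z$, $u\mapsto\textbf{uP}$, hence normal of index $1$ or infinite in $\fix\phi$. If $\eps=-1$, then $u\mapsto u\textbf{t}^{\textbf{uP}/2}$ is an isomorphism from $L:=\{u\in\fix\phi:\textbf{uP}\in2\Z\}=\ker(\fix\phi\to\Z/2\Z)$ onto $\fix\Psi$ (a homomorphism because abelianization is additive, and manifestly bijective onto $\fix\Psi$), so $\fix\Psi\cong L$ with $[\fix\phi:L]\in\{1,2\}$. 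Since $\fix\phi\cong F_k$ with $k\leq n$ (Bestvina--Handel), Lemma \ref{F_2 subgp} now gives: for $\eps=1$, $\fix\Psi\cong F_k\times\Z$ $(0\leq k\leq n)$ or $F_{\aleph_0}\times\Z$; for $\eps=-1$, $\fix\Psi\cong F_k$ $(0\leq k\leq n)$ when $[\fix\phi:L]=1$, and $\fix\Psi\cong F_{2(k-1)+1}=F_{2k-1}$ $(1\leq k\leq n)$ when $[\fix\phi:L]=2$ (the case $k=1$ being just $2\Z\cong\Z=F_1$). This is precisely the list in the statement, which settles the ``only if'' direction.

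For the ``if'' direction I would realize each listed type. I would use that for every $0\leq k\leq n$ there is $\phi\in\aut(F_n)$ with $\fix\phi$ equal to the free factor $\langle x_1,\dots,x_k\rangle$: take $\phi$ to fix $x_1,\dots,x_k$ and act on $\langle x_{k+1},\dots,x_n\rangle$ by an automorphism with trivial fixed subgroup ($x\mapsto x^{-1}$ in rank $1$, a fully irreducible automorphism in rank $\geq2$). Then $F_k\times\Z$ is realized by $(\phi,\eps=1,\textbf{P}=\textbf{0})$; $F_k$ by $(\phi,\eps=-1,\textbf{P}=\textbf{0})$; $F_{2k-1}$ with $k\geq1$ by $(\phi,\eps=-1,\textbf{P}=(1,0,\dots,0)^T)$, since then $x_1\in\fix\phi$ has odd first coordinate so $L$ has index $2$ in $\fix\phi\cong F_k$; and $F_{\aleph_0}\times\Z$ by $(\id_{F_n},\eps=1,\textbf{P}=(1,0,\dots,0)^T)$, where $K=\ker(F_n\to\Z)$ is nontrivial, normal, of infinite index, hence $\cong F_{\aleph_0}$ by Lemma \ref{F_2 subgp}.

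The two case computations are routine once Proposition \ref{DV endo-prop} is available, and the only structural subtlety is that the ``extra'' odd ranks $F_{2k-1}$ with $n<2k-1\leq2n-1$ arise solely from the orientation-reversing choice $\eps=-1$ together with the index-$2$ Schreier jump. The step I expect to cost the most is the realizability claim --- namely that $\langle x_1,\dots,x_k\rangle$ is genuinely the entire fixed subgroup of the glued automorphism $\id_{\langle x_1,\dots,x_k\rangle}*\psi$. This is a fact about fixed subgroups in free products of automorphisms, not about $F_n\times\Z$ itself, so I would either invoke the relevant relative train-track / free-product results or carry out the explicit verification as in \cite{LWZ25}.
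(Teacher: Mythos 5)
Your argument is correct, but note that the paper offers no proof of this statement at all: it is quoted verbatim as \cite[Theorem 4.3]{LWZ25} and used as a black box in the proof of Theorem \ref{main thm 3-Z free}. What you have written is essentially a self-contained reconstruction of the [LWZ25] argument, and it is consistent in method with what the paper does prove, namely Theorem \ref{Thm 3.1}: your dichotomy $\eps=1$ versus $\eps=-1$ is exactly the specialization to $m=1$ of the paper's split into the cases $s=m$ (where $\textbf{Q}=\textbf{I}$ and $\fix\Psi\cong K\times\Z^m$ with $K$ normal of index $1$ or $\aleph_0$ in $\fix\phi$) and $s=0$ (where $\fix\Psi$ embeds back into $\fix\phi$ with finite index bounded via the subgroup $\Gamma_d$, here $d=2$ giving the index-$2$ Schreier jump to $F_{2k-1}$). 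The one place I would tighten your write-up is the realizability of $\fix\phi=\langle x_1,\dots,x_k\rangle$: a fully irreducible automorphism of $\langle x_{k+1},\dots,x_n\rangle$ need not have trivial fixed subgroup (geometric iwips coming from pseudo-Anosov maps of once-punctured surfaces fix a cyclic subgroup), so that choice could inflate the rank by one. The inversion $x_i\mapsto x_i^{-1}$ on the complementary factor works in every rank by the first-letter argument, and the free-product identity $\fix(\id * \psi)=\langle x_1,\dots,x_k\rangle * \fix\psi$ that you worry about is immediate from uniqueness of normal forms for factor-preserving automorphisms, so no train-track machinery is needed. With that substitution your proof is complete and matches the cited result.
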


However, there are completely different conclusions for end-fixed subgroups in this case, see Theorem \ref{main thm 3-Z free}. To prove it, we will find some subgroups which are end-fixed but not aut-fixed up to isomorphism.

\begin{proof}[\textbf{Proof of Theorem \ref{main thm 3-Z free}}]
By Lemma \ref{DV subgp}, every subgroup of $F_n\times\mathbb{Z} (n \geq 2)$ has one of the following forms:
$$F_{\aleph_0}, ~F_{\aleph_0}\times\mathbb{Z}, ~ F_t, ~F_t\times\mathbb{Z}  ~(t\geq 0).$$
Then Item (1) is equivalent to Theorem \ref{LWZ aut-fixed free-Z}.

Now we prove Item (2). Note that the subgroups $F_{\aleph_0}$ and $F_t\times\mathbb{Z} (t> n)$ are not end-fixed in $F_n \times \mathbb{Z}$ by Theorem \ref{Thm 3.1}, while
$F_t, ~F_t\times \mathbb{Z}(0 \leq t \leq n)$ and $F_{\aleph_0} \times \mathbb{Z}$ are aut-fixed (hence end-fixed) in $F_n \times \mathbb{Z}$ by Theorem \ref{LWZ aut-fixed free-Z}.

Below we show that every subgroup $F_t (t>n)$ is also end-fixed. For any $t>n$, let $\Psi: F_n\times\mathbb{Z}\to F_n\times\mathbb{Z}$ defined as
$$(u, v)\mapsto (\phi(u), tv+\gamma(u)),$$
where $\phi:F_n \to F_n=\langle x_1, x_2,\ldots, x_n \rangle$ is defined as
$$x_1 \mapsto x_1, ~x_2 \mapsto x_2, ~x_i \mapsto x_i^{-1}, ~i>2,$$
and $\gamma(u)$ is the sum of powers of $x_1$ in $u$.
Then $\fix\phi=\langle x_1, x_2\rangle\cong F_2$, and
\begin{eqnarray}
\fix\Psi &=& \{(u, v)\in F_n\times\mathbb{Z} \mid u\in\fix\phi, ~v=\gamma(u)/(1-t)\}\notag\\
&\cong& \{u\in\fix\phi\mid \gamma(u)\equiv 0 \mod t-1\}\notag,
\end{eqnarray}
which is a subgroup of $\fix\phi\cong F_2$ with index $t-1$. So, $\fix\Psi \cong F_{t}(t>n)$.
\end{proof}

Clearly, we have a direct corollary of Theorem \ref{main thm 3-Z free} as follows:

\begin{cor}
A subgroup $H$ of $F_n\times\Z(n \geq 2)$, is end-fixed but not aut-fixed up to isomorphism if and only if
 $$H\cong F_{2k} (2k> n ), ~F_t(t>2n).$$
\end{cor}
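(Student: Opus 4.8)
The statement to prove is the final corollary: a subgroup $H$ of $F_n\times\Z$ ($n\geq 2$) is end-fixed but not aut-fixed up to isomorphism if and only if $H\cong F_{2k}$ with $2k>n$, or $H\cong F_t$ with $t>2n$.

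The plan is to combine the two parts of Theorem \ref{main thm 3-Z free} directly. A subgroup $H$ is end-fixed but not aut-fixed precisely when it is end-fixed \emph{and} not aut-fixed. By Theorem \ref{main thm 3-Z free}(2), $H$ is end-fixed iff $H\not\cong F_{\aleph_0},\,F_t\times\Z\,(t>n)$. By Theorem \ref{main thm 3-Z free}(1), $H$ is \emph{not} aut-fixed iff $H\cong F_{\aleph_0},\,F_t\times\Z\,(t>n),\,F_{2k}\,(2k>n),$ or $F_t\,(t>2n)$. So I would intersect these two conditions: among the list of non-aut-fixed candidates, discard those that fail to be end-fixed, namely $F_{\aleph_0}$ and $F_t\times\Z\,(t>n)$, leaving exactly $F_{2k}\,(2k>n)$ and $F_t\,(t>2n)$.

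The one subtlety to check is that the remaining two families $F_{2k}\,(2k>n)$ and $F_t\,(t>2n)$ genuinely are end-fixed, i.e.\ that they are not accidentally excluded by Theorem \ref{main thm 3-Z free}(2). Since a group of the form $F_j$ (finite $j$) is never isomorphic to $F_{\aleph_0}$ nor to any $F_t\times\Z$ (the latter has nontrivial center, while a nonabelian free group is centerless, and $F_0=1\not\cong F_t\times\Z$ either as $t\ge 1$ forces it infinite while $t=0$ gives $\Z\ne 1$), every $F_j$ with $j$ finite satisfies the condition in Theorem \ref{main thm 3-Z free}(2) and is therefore end-fixed. Hence both families are end-fixed and not aut-fixed, giving the ``if'' direction; and the ``only if'' direction is exactly the set-difference computation above. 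I expect no real obstacle here — the whole argument is a short boolean manipulation of the two characterizations already established, the only care needed being the elementary non-isomorphism remarks that separate the finite-rank free groups from the excluded list.

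\begin{proof}[\textbf{Proof}]
By Theorem \ref{main thm 3-Z free}(1), $H$ is not aut-fixed up to isomorphism if and only if
$$H\cong F_{\aleph_0}, ~F_t\times\Z(t>n), ~F_{2k}(2k>n), ~\text{or}~ F_t(t>2n).$$
Among these, $F_{\aleph_0}$ and $F_t\times\Z\,(t>n)$ are not end-fixed by Theorem \ref{main thm 3-Z free}(2). On the other hand, any finite-rank free group $F_j$ is neither isomorphic to $F_{\aleph_0}$ nor to any $F_t\times\Z$ (for $t\geq 1$ the latter is infinitely generated; for $t=0$ it equals $\Z\ne 1$, while $F_0=1$; and for $t\ge 1$ with $F_t\times\Z$ finitely generated only if... indeed $F_t\times\Z$ has nontrivial centre whereas a nonabelian free group is centreless). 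Hence $F_{2k}\,(2k>n)$ and $F_t\,(t>2n)$ satisfy the condition of Theorem \ref{main thm 3-Z free}(2) and are end-fixed up to isomorphism. Therefore $H$ is end-fixed but not aut-fixed up to isomorphism if and only if $H\cong F_{2k}\,(2k>n)$ or $F_t\,(t>2n)$.
\end{proof}
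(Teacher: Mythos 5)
Your proof is correct and takes essentially the same approach as the paper, which presents this as an immediate set-difference consequence of the two parts of Theorem \ref{main thm 3-Z free} with no further argument. (One stray clause in your parenthetical --- ``for $t\geq 1$ the latter is infinitely generated'' --- is false, but harmless, since the centre argument you also give is what actually separates the finite-rank free groups from $F_t\times\Z$ with $t>n\geq 2$.)
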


\section{Fixed subgroups in $\pi_1(\Sigma_g)\times \Z^m$}\label{sect 4}

In this section, we will deal with the case of surface groups and prove Theorem \ref{main thm 2-surf} and Theorem \ref{main thm 4-Z surf}. Let $G=\pi_1(\Sigma_g)\times\mathbb{Z}^m (g\geq 2, m\geq 1)$ with the following representation
$$\left<x_1, x_2,\dots, x_{2g-1}, x_{2g},t_1,\dots,t_m\mid[x_1,x_2]\cdots[x_{2g-1},x_{2g}],[t_i,x_j],[t_i,t_j]\right>.$$
To be consistent with the previous, we use the same form to represent each element of $G$:
$$u\textbf{t}^\textbf{a}=u(x_1,\dots,x_{2g})t_1^{a_1}\cdots t_m^{a_m}$$
where $\textbf{a}=(a_1,\dots,a_m)\in \mathbb{Z}^m$, is a row vector, and $u =u(x_1, \dots , x_{2g})$ is a freely
reduced word on the alphabet $X=\{x_1,\dots, x_{2g}\}$, which has the shortest length (note that the form of this word may be not unique).

\subsection{Endomorphisms}
We get a parallel characterization of endomorphisms of $ \pi_1(\Sigma_g)\times\mathbb{Z}^m$ as in Proposition \ref{DV endo-prop}.

\begin{prop}\label{des of surfend}
	Let $G=\pi_1(\Sigma_g)\times\mathbb{Z}^m(g\geq 2, m\geq 1)$ with a presentation given at the beginning of this section. Then each endomorphism $\Psi \in \edo(G)$ can be represented as one of the following two forms:
	\begin{enumerate}
		\item $ \Psi_{\phi,\textbf{Q},\textbf{P}}=u\textbf{t}^\textbf{a} \mapsto \phi(u)\textbf{t}^{\textbf{aQ}+\textbf{uP}} $, where $ \phi\in \edo(\pi_1(\Sigma_g)), \textbf{Q}\in \mathcal{M}_m(\mathbb{Z}) $ and $ \textbf{P}\in \mathcal{M}_{2g\times m}(\mathbb{Z})$.
		
		\item$\Psi_{z,\textbf{l,h,Q,P}}=u\textbf{t}^\textbf{a}\mapsto z^{\textbf{al}^T+\textbf{uh}^T}\textbf{t}^{\textbf{aQ}+\textbf{uP}}$, where $1\ne z\in \pi_1(\Sigma_g)$ is not a proper power, $\textbf{Q}\in \mathcal{M}_m(\mathbb{Z}) $, $ \textbf{P}\in \mathcal{M}_{2g\times m}(\mathbb{Z}),0\ne \textbf{l} \in \mathbb{Z}^m$, and $\textbf{h}\in \mathbb{Z}^{2g}$.
	\end{enumerate}
	In both cases, $\textbf{u}\in \mathbb{Z}^{2g}$ denotes the abelianization of the word $ u \in \pi_1(\Sigma_g) $.
\end{prop}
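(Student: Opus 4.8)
The plan is to mimic the structure of the proof of Proposition \ref{DV endo-prop} (the Delgado--Ventura classification for $F_n\times\Z^m$), adapting each step to the surface-group factor. The starting point is that any $\Psi\in\edo(G)$ restricted to the central subgroup $\Z^m=\langle t_1,\dots,t_m\rangle$ lands in the centralizer of $\Psi(\pi_1(\Sigma_g))$; since $\Z^m$ is central, $\Psi(t_i)$ must centralize all of $\im\Psi$. First I would record the elementary fact that the center of $\pi_1(\Sigma_g)\times\Z^m$ is exactly $\{1\}\times\Z^m$ (because $\pi_1(\Sigma_g)$ is centerless for $g\geq 2$), and more generally that a nontrivial element $w\mathbf{t}^{\mathbf b}$ commutes with $u\mathbf{t}^{\mathbf a}$ in $G$ iff $w$ and $u$ commute in $\pi_1(\Sigma_g)$, which forces $w,u$ to lie in a common cyclic (maximal cyclic) subgroup $\langle z\rangle$ whenever both are nontrivial, since $\pi_1(\Sigma_g)$ is a torsion-free hyperbolic group and hence has cyclic centralizers.

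Next I would split into two cases according to the projection $q\comp\Psi$, where $q\colon G\to\pi_1(\Sigma_g)$ is the quotient by $\{1\}\times\Z^m$ (equivalently the first-coordinate projection on the presentation). Case (1): $q\comp\Psi$ has image \emph{not} contained in a cyclic subgroup of $\pi_1(\Sigma_g)$. Then, writing $\Psi(u\mathbf t^{\mathbf a})=\psi(u\mathbf t^{\mathbf a})\,\mathbf t^{c(u\mathbf t^{\mathbf a})}$ with $\psi$ the $\pi_1(\Sigma_g)$-component, I claim $\psi(u\mathbf t^{\mathbf a})$ depends only on $u$ and defines an endomorphism $\phi\in\edo(\pi_1(\Sigma_g))$: indeed $\Psi(t_i)$ is central, hence (by the commuting fact just proved, together with the non-cyclic image hypothesis) its $\pi_1(\Sigma_g)$-component is trivial, so $\Psi(t_i)=\mathbf t^{\text{(column)}}$, which pins down $\psi(u\mathbf t^{\mathbf a})=\psi(u)=:\phi(u)$. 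The exponent map $u\mathbf t^{\mathbf a}\mapsto c(u\mathbf t^{\mathbf a})\in\Z^m$ is then a homomorphism $G\to\Z^m$; since $\Z^m$ is abelian it factors through the abelianization $\pi_1(\Sigma_g)^{\mathrm{ab}}\times\Z^m\cong\Z^{2g}\times\Z^m$, giving $c(u\mathbf t^{\mathbf a})=\mathbf a\mathbf Q+\mathbf u\mathbf P$ for matrices $\mathbf Q\in\mathcal M_m(\Z)$, $\mathbf P\in\mathcal M_{2g\times m}(\Z)$. One must also check that $\phi$ is well defined on the relator $[x_1,x_2]\cdots[x_{2g-1},x_{2g}]$ — but this is automatic since $\Psi$ is, and the relator maps into $\pi_1(\Sigma_g)$ under $q$. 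This yields form (1). Case (2): the image of $q\comp\Psi$ \emph{is} contained in a cyclic subgroup; let $z\in\pi_1(\Sigma_g)$ be a generator of the maximal cyclic subgroup containing it, chosen not to be a proper power. Then $\psi(u\mathbf t^{\mathbf a})=z^{\,e(u\mathbf t^{\mathbf a})}$ for an integer exponent $e(u\mathbf t^{\mathbf a})$, and again $e$ is a homomorphism $G\to\Z$, hence factors through the abelianization to give $e(u\mathbf t^{\mathbf a})=\mathbf a\mathbf l^T+\mathbf u\mathbf h^T$ with $\mathbf l\in\Z^m$, $\mathbf h\in\Z^{2g}$; the same abelianization argument gives the $\Z^m$-component $\mathbf a\mathbf Q+\mathbf u\mathbf P$. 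If $\mathbf l=\mathbf 0$ the endomorphism also fits form (1) with $\phi$ the (cyclic) map $u\mapsto z^{\mathbf u\mathbf h^T}$ — one checks this $\phi$ is a genuine endomorphism of $\pi_1(\Sigma_g)$ because it factors through the abelianization, which kills the relator; so we may assume $\mathbf 0\ne\mathbf l$, giving form (2).

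The main obstacle I anticipate is Case (2), specifically justifying that when $q\comp\Psi$ has cyclic image the $\pi_1(\Sigma_g)$-component is literally a \emph{power of a fixed} $z$ and that the exponent is \emph{additive} — i.e.\ that there is no ``twisting'' and the would-be homomorphism $e\colon G\to\Z$ is honest. The subtlety is that the normal form $u$ of an element of $G$ is only the $\pi_1(\Sigma_g)$-coordinate, and one has to be careful that $\Psi$ is determined by its values on generators and that the relator constraint is compatible; here I would lean on the torsion-free hyperbolicity of $\pi_1(\Sigma_g)$ (unique roots, cyclic centralizers, and the fact that $\langle z\rangle$ is its own normalizer-intersection structure) to rule out pathological behavior, exactly as in \cite{DV13}. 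A secondary but routine point is verifying conversely that every $\Psi_{\phi,\mathbf Q,\mathbf P}$ and every $\Psi_{z,\mathbf l,\mathbf h,\mathbf Q,\mathbf P}$ is a well-defined endomorphism — this amounts to checking the defining relations $[t_i,t_j]$, $[t_i,x_j]$, and the surface relator are respected, which follows because in form (1) the $\pi_1(\Sigma_g)$-part is $\phi$ applied factor-wise and the $\Z^m$-part is linear in abelianized data, and in form (2) the whole image is abelian.
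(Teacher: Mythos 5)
Your proposal is correct and follows essentially the same route as the paper: both arguments rest on the centrality of the $t_i$'s combined with the fact that nontrivial commuting elements of $\pi_1(\Sigma_g)$ lie in a common maximal cyclic subgroup (cyclic centralizers in a torsion-free hyperbolic group), and both extract the matrices $\textbf{Q},\textbf{P}$ (and, in the abelian case, $\textbf{l},\textbf{h}$) by factoring the relevant components through the abelianization. The only cosmetic difference is that you split cases on whether the image of $q\circ\Psi$ in $\pi_1(\Sigma_g)$ is cyclic, whereas the paper splits on whether the $\pi_1(\Sigma_g)$-components of the $\Psi(t_i)$ are all trivial; the same centralizer argument shows these dichotomies coincide.
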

\begin{proof}
	Note that we only need to consider all the images of generators under the endomorphism $\Psi$. Now, we set
	\begin{eqnarray*}
		\Psi(x_i)&=&c_i\textbf{t}^{\textbf{p}_{i}}, ~1\leq i\leq 2g,\\
        \Psi(t_i)&=&z_i\textbf{t}^{\textbf{q}_{i}},  ~1\leq i\leq m.
	\end{eqnarray*}

(1) If all the $z_i$'s are trivial, namely
        $$\Psi(t_i)=\textbf{t}^{\textbf{q}_{i}}, \quad 1\leq i\leq m.$$
Let $p:\pi_1(\Sigma_g)\times\mathbb{Z}^m\to \pi_1(\Sigma_g), ~u\textbf{t}^{\textbf{a}}\mapsto u,$ and $\phi=(p\comp\Psi)|_{\pi_1(\Sigma_g)}$. Then  $\phi$ can be seen as an endomorphism of $\pi_1(\Sigma_g)$, and
	$$ \Psi=\Psi_{\phi,\textbf{Q},\textbf{P}}: u\textbf{t}^\textbf{a} \mapsto \phi(u)\textbf{t}^{\textbf{aQ}+\textbf{uP}}, \quad\forall u\textbf{t}^\textbf{a}\in \pi_1(\Sigma_g)\times\mathbb{Z}^m,$$
	where $\textbf{Q}=(\textbf{q}_{1},\dots,\textbf{q}_{m})^T\in \mathcal{M}_m(\mathbb{Z})$,
 $\textbf{P}=(\textbf{p}_{1},\dots,\textbf{p}_{2g})^T\in \mathcal{M}_{2g\times m}(\mathbb{Z})$, and $\textbf{u}$ denotes the abelianization of $u$.
	
(2) If some $z_i\neq 1$. Since $t_i$ commutes with all the $t_j$'s and $x_k$'s, we have
	$$z_iz_j=z_jz_i, \quad z_ic_k=c_kz_i.$$
Then, there exists an element $z\in \pi_1(\Sigma_g)$ (not a proper power) such that
$$1\neq \left\langle z_1,\dots, z_m, c_1, \ldots, c_{2g}\right\rangle \subset \left\langle z \right\rangle.$$ (Two nontrivial elements of $\pi_1(\Sigma_g)(g\geq 2)$ commute if and only if they share the same axis in $\Sigma_g$'s universal covering space: the Poincar\'{e} disk).
	With a similar process as in the above case (1), we can get
	$$\Psi=\Psi_{z,\textbf{l,h,Q,P}}: u\textbf{t}^\textbf{a}\mapsto z^{\textbf{al}^T+\textbf{uh}^T}\textbf{t}^{\textbf{aQ}+\textbf{uP}},$$
	where $1\ne z\in \pi_1(\Sigma_g)$ , $\textbf{Q}\in \mathcal{M}_m(\mathbb{Z}),$ $ \textbf{P}\in \mathcal{M}_{2g\times m}(\mathbb{Z})$, $0\ne \textbf{l} \in \mathbb{Z}^m$, and $\textbf{h}\in \mathbb{Z}^{2g}$.
\end{proof}

Since the forms of endomorphisms of $\pi_1(\Sigma_g)\times\mathbb{Z}^m$ are similar to that of $F_n \times \Z^m$, we can also give the following characterization of monomorphisms and epimorphisms as Delgado and Ventura did in \cite[Proposition 5.2]{DV13}.

\begin{prop}\label{Hopf non co-hopf prop}
	Let $G=\pi_1(\Sigma_g)\times\mathbb{Z}^m (g\geq 2, m\geq 1)$ and $\Psi \in \edo(G)$. Then,
	\begin{enumerate}
		\item $\Psi$ is a monomorphism if and only if $\Psi$ is the first kind of endomorphism $\Psi_{\phi,\textbf{Q},\textbf{P}}$, with $\phi$ a monomorphism of $\pi_1(\Sigma_g)$ and $\mathrm{det}(\textbf{Q}) \neq 0$.
		
		\item $\Psi$ is an epimorphism if and only if $\Psi$ is the first kind of endomorphism $\Psi_{\phi, \textbf{Q}, \textbf{P}}$, with $\phi$ an epimorphism of $\pi_1(\Sigma_g)$ and $\mathrm{det}(\textbf{Q}) = \pm 1$.
		
		\item $\Psi$ is an automorphism if and only if $\Psi$ is an epimorphism. Hence $G$ is Hopfian but not co-Hopfian and $$(\Psi_{\phi,\textbf{Q},\textbf{P}})^{-1}=\Psi_{\phi^{-1},\textbf{Q}^{-1}, -\textbf{M}^{-1} \textbf{P} \textbf{Q}^{-1}},$$ where $\textbf{M} \in \mathrm{GL}_{2g}(\Z)$ is the abelianization of $\phi$.
	\end{enumerate}
\end{prop}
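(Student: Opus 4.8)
The plan is to mimic, \emph{mutatis mutandis}, the argument of Delgado and Ventura for $F_n\times\Z^m$ (\cite[Proposition 5.2]{DV13}), using Proposition \ref{des of surfend} as the structural input in place of Proposition \ref{DV endo-prop}, together with the fact that $\pi_1(\Sigma_g)$ itself is Hopfian and co-Hopfian. First I would dispose of the second kind of endomorphism $\Psi_{z,\textbf{l},\textbf{h},\textbf{Q},\textbf{P}}$: its image lies inside $\langle z\rangle\times\Z^m\cong\Z^{m+1}$, which is a proper subgroup of $G$ of infinite index (since $g\geq 2$ forces $\pi_1(\Sigma_g)$ to be non-abelian, in particular not cyclic), so such a $\Psi$ is neither injective nor surjective. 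Hence for all three items we may assume $\Psi=\Psi_{\phi,\textbf{Q},\textbf{P}}$ is of the first kind.

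For item (1), I would argue that $\Psi_{\phi,\textbf{Q},\textbf{P}}(u\textbf{t}^\textbf{a})=1$ means $\phi(u)=1$ and $\textbf{a}\textbf{Q}+\textbf{u}\textbf{P}=\textbf{0}$. If $\phi$ is not injective, pick $1\neq u\in\ker\phi$; then $u\textbf{t}^{\textbf{a}}$ with $\textbf{a}=-\textbf{u}\textbf{P}\textbf{Q}^{-1}$ (after clearing denominators: replace $u$ by a suitable power $u^d$ so that $\textbf{a}$ is integral, using that powers of nontrivial elements of a surface group are nontrivial) lies in the kernel. If $\det(\textbf{Q})=0$, pick $\textbf{0}\neq\textbf{a}\in\ker(\textbf{Q}\,\cdot)$ on the left; then $\textbf{t}^{\textbf{a}}$ maps to $\textbf{t}^{\textbf{a}\textbf{Q}}=1$. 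Conversely, if $\phi$ is injective and $\det(\textbf{Q})\neq 0$, then $\Psi(u\textbf{t}^{\textbf{a}})=1$ gives $u=1$ (so $\textbf{u}=\textbf{0}$), whence $\textbf{a}\textbf{Q}=\textbf{0}$ and $\textbf{a}=\textbf{0}$. For item (2), surjectivity onto the $\pi_1(\Sigma_g)$-factor (via the projection $p$) forces $\phi$ to be an epimorphism of $\pi_1(\Sigma_g)$, hence an automorphism by Hopficity, with abelianization matrix $\textbf{M}\in\mathrm{GL}_{2g}(\Z)$; and reaching every $\textbf{t}^{\textbf{b}}$ requires solving $\textbf{a}\textbf{Q}+\textbf{u}\textbf{P}=\textbf{b}$ with $\phi(u)=1$, i.e.\ $u=1$, so $\textbf{a}\textbf{Q}=\textbf{b}$ must be solvable for all $\textbf{b}$ over $\Z$, which is exactly $\det(\textbf{Q})=\pm1$. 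The converse is a direct composition check: when $\phi\in\aut(\pi_1(\Sigma_g))$ and $\textbf{Q}\in\mathrm{GL}_m(\Z)$, the formula $\Psi_{\phi^{-1},\textbf{Q}^{-1},-\textbf{M}^{-1}\textbf{P}\textbf{Q}^{-1}}$ is a two-sided inverse, which one verifies by substituting into the composition law for first-kind endomorphisms.

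For item (3), combining (1) and (2): an epimorphism has $\phi$ epic and $\det(\textbf{Q})=\pm1$; since $\pi_1(\Sigma_g)$ is Hopfian, $\phi$ is an automorphism, and $\det(\textbf{Q})=\pm 1$ makes $\textbf{Q}$ invertible over $\Z$, so by the inverse formula $\Psi$ is an automorphism. Thus every epimorphism is an automorphism, i.e.\ $G$ is Hopfian. That $G$ is not co-Hopfian follows by exhibiting a proper monomorphism, e.g.\ $\Psi_{\id,2\textbf{I}_m,\textbf{0}}: u\textbf{t}^{\textbf{a}}\mapsto u\textbf{t}^{2\textbf{a}}$, which is injective by item (1) but has image $\pi_1(\Sigma_g)\times(2\Z)^m\subsetneq G$.

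The only genuinely delicate point — and the place I expect to spend the most care — is the integrality/denominator-clearing in the non-injectivity direction of item (1): the equation $\textbf{a}\textbf{Q}+\textbf{u}\textbf{P}=\textbf{0}$ need not have an \emph{integer} solution $\textbf{a}$ for a given $u\in\ker\phi$, so one must pass to a power $u^d$ (legitimate since surface groups are torsion-free, hence $u^d\neq 1$ and $u^d\in\ker\phi$), choosing $d$ divisible by $\det(\textbf{Q})$ so that $\textbf{a}=-d\,\textbf{u}\textbf{P}\textbf{Q}^{-1}\in\Z^m$; this is the one step where the surface-group hypotheses (torsion-freeness, non-cyclicity) are actually used, and it is exactly parallel to the corresponding step in \cite{DV13}. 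Everything else is a routine transcription of the free-group argument, so I would present items (1)–(3) compactly, flagging only where Proposition \ref{des of surfend} and Hopficity/co-Hopficity of $\pi_1(\Sigma_g)$ enter.
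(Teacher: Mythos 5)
Your proposal is correct and follows essentially the same route as the paper's proof: rule out the second kind of endomorphism via the (non-)abelianness of the image, characterize injectivity by clearing denominators with the adjugate/power trick $u^{\det(\textbf{Q})}$ (using torsion-freeness of $\pi_1(\Sigma_g)$), characterize surjectivity via Hopficity of $\pi_1(\Sigma_g)$ and solvability of $\textbf{a}\textbf{Q}=\textbf{b}$ over $\Z$, and verify the explicit inverse $\Psi_{\phi^{-1},\textbf{Q}^{-1},-\textbf{M}^{-1}\textbf{P}\textbf{Q}^{-1}}$ by direct composition. The only cosmetic differences are that you give an explicit non-surjective monomorphism $\Psi_{\id,2\textbf{I}_m,\textbf{0}}$ where the paper merely remarks one exists, and that for injectivity of the second-kind case you should lean on the image being abelian (while $G$ is not) rather than merely proper.
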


\begin{proof}
	(1) $``\Rightarrow":$ Suppose $\Psi$ is a monomorphism. Then $G\cong \im \Psi $ is not abelian. Note that the image $\im{\Psi_{z,\textbf{l,h,Q,P}}}$ of the second type endomorphism is abelian, so $\Psi$ must be of the first type, namely, $\Psi=\Psi_{\phi,\textbf{Q},\textbf{P}}$ for some $\phi\in \edo(\pi_1(\Sigma_g)), \textbf{Q}\in \mathcal{M}_m(\mathbb{Z}) $ and $ \textbf{P}\in \mathcal{M}_{2g\times m}(\mathbb{Z})$.
Since $\Psi(\textbf{t}^\textbf{a})=\textbf{t}^{\textbf{aQ}}$, it is easy to see $\det(\textbf{Q})\neq 0$ by the injectivity of $\Psi$.
Finally, we prove the injectivity of $\phi$. Suppose $\phi(u)=1$. Then $\textbf{Q}^{-1}\det(\textbf{Q})\in \mathcal{M}_m(\mathbb{Z})$ and
$$\Psi(u^{-\mathrm{det}(\textbf{Q})} \textbf{t}^{ \textbf{uP}\textbf{Q}^{-1}\mathrm{det}(\textbf{Q})})=\phi(u^{-\mathrm{det}(\textbf{Q})}) \textbf{t}^{\textbf{uP}\textbf{Q}^{-1}\det(\textbf{Q})\cdot\textbf{Q}+(-\det(\textbf{Q})\textbf{u})\textbf{P}}=1.$$
So the injectivity of $\Psi$ implies $u^{-\mathrm{det}(\textbf{Q})}=1$, and hence $u=1$ because
$\pi_1(\Sigma_g)$ is torsion free. Therefore, $\phi$ is a monomorphism.
	
	$``\Leftarrow":$ Suppose $\Psi=\Psi_{\phi,\textbf{Q},\textbf{P}}$ is an endomorphism of the first type, $\phi \in \edo(\pi_1(\Sigma_g))$ is injective and $\det(\textbf{Q}) \neq 0$.
	Let $u \textbf{t}^\textbf{a} \in G$ and $\Psi(u \textbf{t}^\textbf{a})=\phi(u) \textbf{t}^{\textbf{aQ}+\textbf{uP}}=1$. Then the injectivity of $\phi$ implies $u=1$, so we have
	$\textbf{aQ}+\textbf{uP}=\textbf{aQ}=\textbf{0}$, hence $\textbf{a}=\textbf{0}$ and $\Psi$ is injective.
	
	(2) $``\Rightarrow":$ Suppose $\Psi$ is an epimorphism. Then $\Psi$ must be the first type endomorphism, because $\im{\Psi_{z,\textbf{l,h,Q,P}}}$ is contained in a proper subgroup of $G$.
	For any $v \in \pi_1(\Sigma_g) \leq G$, suppose $\Psi(u \textbf{t}^\textbf{a})=v$, which implies that $\phi(u)=v$, and $\phi$ is an epimorphism. Finally we prove
	$\det (\textbf{Q})=\pm 1$. Suppose
$$\Psi(u_j \textbf{t}^{\textbf{b}_\textbf{j}})=\phi(u_j) \textbf{t}^ { \textbf{b}_\textbf{j}\textbf{Q}+\textbf{u}_\textbf{j}\textbf{P} }=\textbf{t}^{\bm{\delta}_j},$$
where $j \in \{1,2,\ldots,m\}$ and
	$\bm{\delta}_j=(0,\cdots,0,1,0,\cdots,0) \in \Z^m$ with the $j$-th coordinate 1. Then we have $\phi(u_j)=1$, so $u_j=1$ because $\pi_1(\Sigma_g)$ is Hopfian and $\phi$ is an epimorphism,
	this means $\textbf{b}_\textbf{j}\textbf{Q}=\textbf{b}_\textbf{j}\textbf{Q}+\textbf{u}_\textbf{j}\textbf{P}=\bm{\delta}_j$ for any $j \in \{1,2,\ldots,m\}$, so there is a matrix $\textbf{B}$ such that $\textbf{BQ}=\textbf{I}_m$, hence $\det(\textbf{Q})= \pm 1$.

	$``\Leftarrow":$ Suppose $\Psi=\Psi_{\phi,\textbf{Q},\textbf{P}}$ is an endomorphism of the first type, $\phi \in \edo(\pi_1(\Sigma_g))$ is surjective and $\det(\textbf{Q})=\pm 1$.
	Notice that $\pi_1(\Sigma_g)$ is Hopfian and then $\phi \in \aut(\pi_1(\Sigma_g))$. We denote $\Psi'=\Psi_{\phi^{-1},\textbf{Q}^{-1}, -\textbf{M}^{-1} \textbf{P} \textbf{Q}^{-1}}$, where $\textbf{M} \in \mathrm{GL}_{2g}(\Z)$ is the abelianization of $\phi$.
	For any $u \textbf{t}^\textbf{a} \in G$,
	\begin{eqnarray}\label{equ epi of G}
		\Psi \circ \Psi'(u \textbf{t}^\textbf{a}) &=& \Psi(\phi^{-1}(u) \textbf{t}^{\textbf{aQ}^{-1}-\textbf{uM}^{-1}\textbf{PQ}^{-1}})\\
		&=& u \textbf{t}^{\textbf{a}-\textbf{uM}^{-1}\textbf{P}+\textbf{uM}^{-1}\textbf{P}}\notag\\
		&=& u \textbf{t}^\textbf{a}.\notag
	\end{eqnarray}
	Then $\Psi$ is surjective.
	
	(3)  It suffices to prove that any epimorphism $\Psi$ of $G$ is an automorphism. Note that $\Psi=\Psi_{\phi,\textbf{Q},\textbf{P}}$ by the above Item (2). For any $u \textbf{t}^\textbf{a} \in G$, we have
	\begin{eqnarray}
		\Psi' \comp \Psi(u \textbf{t}^\textbf{a}) &=& \Psi'(\phi(u)\textbf{t}^{\textbf{aQ}+\textbf{uP}} )\\
		&=& u \textbf{t}^{\textbf{a}+\textbf{uP}\textbf{Q}^{-1}-\textbf{uP}\textbf{Q}^{-1}}\notag\\
		&=& u \textbf{t}^\textbf{a}.\notag
	\end{eqnarray}
	By combining Eq. (\ref{equ epi of G}), we obtain that $\Psi=\Psi_{\phi,\textbf{Q},\textbf{P}}$ is an automorphism with
$$\Psi^{-1}=\Psi'=\Psi_{\phi^{-1},\textbf{Q}^{-1}, -\textbf{M}^{-1} \textbf{P} \textbf{Q}^{-1}}.$$
Hence $G$ is Hopfian. Moreover, from the proof of (1), it is clear that a monomorphism may not necessarily be an epimorphism, so $G$ is not co-Hopfian.
\end{proof}

\begin{proof}[\textbf{Proof of Corollary \ref{Hopf cor}}]
Let $G=H\times\mathbb{Z}^m (m\geq 1)$ for $H=F_g$ or $H=\pi_1(\Sigma_g)$ $(g\geq 1)$. We have three cases:

Case (1). If $H=F_1\cong \Z$, then $G=\Z^{m+1}$ is Hopfian and not co-Hopfian as a finitely generated free abelian group. If $H=F_g$ with $g\geq 2$, then $G$ is a direct product of residually finite groups and hence $G$ is residually finite, so $G$ is Hopfian because every finitely generated residually finite group is already Hopfian. Let $F_g$ be freely generated by $\{a_1,a_2,\dots,a_g\}$ and denote $A=\left<a_1\right>$ and $B=\left<a_2,\dots,a_g\right>$. Then, $G=(A*B)\times\Z$ is not co-Hopfian since $G$ is isomorphic to its proper subgroup $(A*(a_2a_1Ba_1^{-1}a_2^{-1}))\times\Z$.

Case (2). If $H=\pi_1(\Sigma_1)=\Z^2$, then $G=\Z^{m+2}$ is Hopfian but not co-Hopfian.

Case (3). If $H=\pi_1(\Sigma_g)(g\geq 2)$, then $G$ is Hopfian but not co-Hopfian by Item (3) in Proposition \ref{Hopf non co-hopf prop}.
	\end{proof}

\begin{proof}[\textbf{Proof of Corollary \ref{Hopf cor for hyperbolic case}}]

Since all hyperbolic groups are Hopfian \cite{FS23}, we have $H$ is Hopfian.

Case (1). $H$ is non-elementary. Note that every non-elementary hyperbolic group has finite center. Therefore, the center $Z(H)$ is finite. Then, the quotient group $H/Z(H)$ is also hyperbolic and thus Hopfian.
Since $H$ and $H/Z(H)$ are Hopfian and $\Z^m$ is finitely generated abelian, $H\times\Z^m$ is Hopfian according to a corollary in \cite{Hi69}.

Case (2). $H$ is elementary, i.e., finite or virtually infinite cyclic. In this case, $H$ is residually finite. Then, the direct product $H\times\Z^m$ is also residually finite and thus Hopfian.

Note that $H\times(2\Z)^m$ is a proper subgroup of $H\times\Z^m$ and $H\times(2\Z)^m\cong H\times\Z^m$. Hence, $H\times\Z^m$ is not co-Hopfian.
\end{proof}

\subsection{Fixed subgroups}
Now, we have the following.

\begin{thm}\label{thm for surf>2}
	Let $\Psi$ be an endomorphism of $ G =\pi_1(\Sigma_g)\times\mathbb{Z}^m  (g \geq 2) $. Then $\fix\Psi$ is not isomorphic to
	$$ F_{\aleph_0}, ~F_t\times \mathbb{Z}^m (t \geq 2g), ~\pi_1(\Sigma_n) \times \mathbb{Z}^m(n > g). $$	
\end{thm}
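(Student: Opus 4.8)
The plan is to mirror the strategy used in the proof of Theorem \ref{Thm 3.1}, replacing the free group $F_n$ by $\pi_1(\Sigma_g)$ throughout, and using the description of endomorphisms from Proposition \ref{des of surfend} together with the structure of subgroups of $\pi_1(\Sigma_g)$ from Lemma \ref{subgp of surface}. First I would dispose of the second type of endomorphism: if $\Psi=\Psi_{z,\mathbf{l},\mathbf{h},\mathbf{Q},\mathbf{P}}$, then $\im\Psi$ is abelian, so $\fix\Psi$ is abelian and in particular is not isomorphic to any of $F_{\aleph_0}$, $F_t\times\Z^m$ with $t\geq 2g\geq 4$, or $\pi_1(\Sigma_n)\times\Z^m$ with $n>g$ (all of which are nonabelian). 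So I may assume $\Psi=\Psi_{\phi,\mathbf{Q},\mathbf{P}}$ with $\phi\in\edo(\pi_1(\Sigma_g))$, and then
$$\fix\Psi=\{u\mathbf{t}^{\mathbf{a}}\mid \mathbf{a}=\mathbf{aQ}+\mathbf{uP},~u=\phi(u)\}.$$

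Next I would run the same projection argument: let $p\colon\pi_1(\Sigma_g)\times\Z^m\to\pi_1(\Sigma_g)$ be the projection, $p_\Psi$ its restriction to $\fix\Psi$, giving a short exact sequence $0\to\ker(p_\Psi)\hookrightarrow\fix\Psi\xrightarrow{p_\Psi} p(\fix\Psi)\to 1$ with $p(\fix\Psi)\vartriangleleft\fix\phi$ and $\ker(p_\Psi)=\{\mathbf{t}^{\mathbf{a}}\mid \mathbf{a}=\mathbf{aQ}\}\cong\Z^s$ for some $s\leq m$. Here $\fix\phi$ is a fixed subgroup of an endomorphism of a surface group, so by the finiteness theorem of Jiang--Wang--Zhang \cite{JWZ11} and Lemma \ref{subgp of surface} it is one of $F_k$ ($k\geq 0$), $F_{\aleph_0}$, or $\pi_1(\Sigma_h)$ for some $h\geq 1$; in all cases $\fix\phi$ has the property that its subgroups are again on the list of Lemma \ref{subgp of surface}, which is the key structural input. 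Since $p(\fix\Psi)$ is then a subgroup of $\fix\phi$, it is free or a surface group (or $F_{\aleph_0}$). Exactly as before, freeness (or at worst one-relator-ness allowing a section) of $p(\fix\Psi)$ lets me split the sequence — more carefully: one needs that $p(\fix\Psi)$ is \emph{projective} enough to admit a section $\iota$; when $p(\fix\Psi)$ is free this is immediate, and when it is a surface group one argues the sequence still splits because $\ker(p_\Psi)$ is central $\Z^s$ and a surface group has $H^2$ vanishing with the relevant coefficients, or simply picks preimages of a generating set and checks the lift is a homomorphism using centrality. This gives $\fix\Psi\cong p(\fix\Psi)\times\Z^s$ with $s\leq m$.

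Now I would split into cases on $s$. If $0<s<m$, then $\fix\Psi\cong p(\fix\Psi)\times\Z^s$ has free-abelian part of rank strictly less than $m$, so it cannot be isomorphic to $F_t\times\Z^m$ (too few $\Z$-factors) nor to $\pi_1(\Sigma_n)\times\Z^m$, nor to $F_{\aleph_0}$ (which has no central $\Z$); done. If $s=m$, then $\ker(p_\Psi)=\Z^m$ forces $\mathbf{Q}=\mathbf{I}$, so $p(\fix\Psi)=\{u\in\fix\phi\mid \mathbf{uP}=\mathbf{0}\}$ is the kernel of the homomorphism $u\mapsto\mathbf{uP}$ into $\Z^m$, hence a normal subgroup of $\fix\phi$ of index $1$ or $\aleph_0$; by Lemma \ref{subgp of surface} applied to $\fix\phi$ (which is $F_k$, $F_{\aleph_0}$, or $\pi_1(\Sigma_h)$), $p(\fix\Psi)$ is $F_k$, $F_{\aleph_0}$, or $\pi_1(\Sigma_h)$ with $h\leq g$ — crucially the genus cannot increase under passing to a subgroup, and when $\fix\phi\cong F_k$ we have $k\leq\rk(\pi_1(\Sigma_g))=2g$ but in fact $k<2g$ by the sharp bound of \cite{JWZ11} for surface groups (or one argues $p(\fix\Psi)$ being an \emph{infinite-index} normal subgroup is $F_{\aleph_0}$ which has the wrong form with a $\Z^m$ factor only if... — need to double check this), so $\fix\Psi=p(\fix\Psi)\times\Z^m$ is not of the excluded forms. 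If $s=0$, then $\fix\Psi\cong p(\fix\Psi)$ and $\mathbf{I}-\mathbf{Q}$ is invertible over $\Q$; setting $d=\det(\mathbf{I}-\mathbf{Q})$, for every $u\in\fix\phi$ the equation $\mathbf{a}=\mathbf{aQ}+d\,\mathbf{uP}$ has the unique integer solution $\mathbf{a}=\mathbf{uP}\bigl(d(\mathbf{I}-\mathbf{Q})^{-1}\bigr)$, so $u^d\in p(\fix\Psi)$ for all $u\in\fix\phi$; hence the subgroup $\Gamma_d=\langle u^d,[u,v]\mid u,v\in\fix\phi\rangle$ sits inside $p(\fix\Psi)$, and $\Gamma_d$ has finite index in $\fix\phi$ (it contains the $d$-th power of the abelianization and all commutators, so the quotient is a quotient of $(\Z/d)^{\beta_1(\fix\phi)}$ when $\fix\phi$ is finitely generated). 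Thus $p(\fix\Psi)$ has finite index in $\fix\phi$, and when $\fix\phi\cong\pi_1(\Sigma_h)$ with $h\leq g$ a finite-index subgroup is $\pi_1(\Sigma_{h'})$ with $h'\leq $ something $\leq g$ is false — index $m$ gives genus $m(h-1)+1$ which can be large — so here I must be more careful: $\fix\Psi\cong p(\fix\Psi)$ has $s=0$ meaning \emph{no} central $\Z$-factor, hence it is never isomorphic to $F_t\times\Z^m$, $\pi_1(\Sigma_n)\times\Z^m$ (those have $m\geq 2$ central $\Z$'s), nor to $F_{\aleph_0}$ unless $p(\fix\Psi)\cong F_{\aleph_0}$, which is impossible since $p(\fix\Psi)\leq\fix\phi$ has finite index and $\fix\phi$ is finitely generated.

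The main obstacle I anticipate is the splitting of the short exact sequence when $p(\fix\Psi)$ happens to be a surface group rather than free: unlike the free case in Theorem \ref{Thm 3.1}, one cannot invoke projectivity directly, and one must instead exploit that $\ker(p_\Psi)$ is a \emph{central} free-abelian subgroup to build the section by hand (lift a generating set and verify relations hold because the correction terms land in the center). A secondary subtlety, flagged above, is the $s=m$ case when $\fix\phi$ is free of rank exactly $2g$ or when it is a surface group of genus $g$: one must confirm that the relevant normal subgroup $p(\fix\Psi)$, if it equals $\fix\phi$, still cannot make $\fix\Psi$ isomorphic to $F_t\times\Z^m$ with $t\geq 2g$ or to $\pi_1(\Sigma_n)\times\Z^m$ with $n>g$ — this should follow because $\fix\phi\cong F_k$ forces $k\leq 2g$ with equality impossible for a proper fixed subgroup (the genus-$g$ surface group is not free), and $\fix\phi\cong\pi_1(\Sigma_h)$ forces $h\leq g$ by the Jiang--Wang--Zhang bound, so in either situation the excluded isomorphism types are avoided.
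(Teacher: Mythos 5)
Your overall strategy is the same as the paper's: discard the type-(2) endomorphisms because their images are abelian, project $\fix\Psi$ to $\pi_1(\Sigma_g)$, split the resulting central extension to get $\fix\Psi\cong p(\fix\Psi)\times\Z^s$, and then run the three cases $0<s<m$, $s=m$, $s=0$ exactly as in the free case, with \cite{JWZ11} and Lemma \ref{subgp of surface} replacing Imrich--Turner and Schreier's formula. The case analysis, after your self-corrections, lands on the right conclusions.

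The one genuine problem is the justification of the splitting when $p(\fix\Psi)$ is a closed surface group, which you correctly single out as the only new difficulty compared with Theorem \ref{Thm 3.1} --- but both of the reasons you offer are wrong as stated. First, $H^2(\pi_1(\Sigma_h);\Z^s)\cong\Z^s$ is \emph{not} zero for a closed orientable surface group: central extensions of $\pi_1(\Sigma_h)$ by $\Z$ need not split (the fundamental group of the unit tangent bundle of $\Sigma_h$ is the standard counterexample). Second, ``pick preimages of a generating set and check the lift is a homomorphism using centrality'' begs the question: the lift of the surface relator is some element of the central $\Z^s$, and there is no a priori reason it can be normalized away. What actually saves the day is that this particular extension $0\to\ker(p_\Psi)\to\fix\Psi\to p(\fix\Psi)\to 1$ is obtained from the \emph{split} extension $0\to\Z^m\to p(\fix\Psi)\times\Z^m\to p(\fix\Psi)\to 1$ (the preimage $p^{-1}(p(\fix\Psi))\le G$, which contains $\fix\Psi$): pushing out along the inclusion $\ker(p_\Psi)=\Z^s\hookrightarrow\Z^m$ sends the class of your extension to the class of $\fix\Psi\cdot\Z^m=p(\fix\Psi)\times\Z^m$, which is zero, and the coefficient map $H^2(\pi_1(\Sigma_h);\Z^s)\to H^2(\pi_1(\Sigma_h);\Z^m)$ is injective, so the Euler class of your extension already vanishes. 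The paper sidesteps all of this by quoting the proof of \cite[Lemma 2.4]{LWZ25}. A smaller slip: you allow $\fix\phi\cong F_{\aleph_0}$, whereas \cite{JWZ11} gives $\rk(\fix\phi)\le 2g$, so in fact $\fix\phi\cong F_k$ with $k<2g$ or $\fix\phi\cong\pi_1(\Sigma_g)$; the strict inequality $k<2g$ is precisely what rules out $F_t\times\Z^m$ with $t\ge 2g$ in your $s=m$ case, so it should be taken as a stated input rather than left as something to ``double check''.
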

\begin{proof}
If $m=0$, then $ G =\pi_1(\Sigma_g)(g \geq 2) $ and hence $\fix\Psi \cong F_k (k<2g)$ or $\pi_1(\Sigma_g)$ by \cite{JWZ11}. So the conclusion holds.

Below we assume $m\geq 1$. Note that if the endomorphism $\Psi$ is of type (2) in Proposition \ref{des of surfend}, then the image of $\Psi$ is abelian, and hence $ \fix\Psi $ is abelian. Therefore, $ \fix\Psi $ is not isomorphic to $F_{\aleph_0}, F_t\times \mathbb{Z}^m (t \geq 2g), \pi_1(\Sigma_n) \times \mathbb{Z}^m(n > g)$. Now we assume that $\Psi$ is of type (1), namely,
	$$\Psi=\Psi_{\phi,\textbf{Q}, \textbf{P}}: u\textbf{t}^\textbf{a} \mapsto \phi(u)\textbf{t}^{\textbf{aQ}+\textbf{uP}},$$
	where $\phi\in \edo(\pi_1(\Sigma_g)), \textbf{Q}\in \mathcal{M}_m(\mathbb{Z})$ and $\textbf{P}\in \mathcal{M}_{2g\times m}(\mathbb{Z})$.
	Then, we have
	\begin{eqnarray}\label{eq. fix Psi surf}
		\fix\Psi=\{u\textbf{t}^\textbf{a}\mid\textbf{a}=\textbf{aQ}+\textbf{uP}, ~u=\phi(u)\}.
	\end{eqnarray}
	
	Now we consider the projection $$p:  \pi_1(\Sigma_g)\times \Z^m\to  \pi_1(\Sigma_g), \quad u\textbf{t}^\textbf{a}\mapsto u,$$
	and let $p_\Psi: \fix\Psi\to p(\fix\Psi)\leq \pi_1(\Sigma_g)$ be the restriction of $p$ on $\fix\Psi$.
	Then we have the natural short exact sequence
	\begin{equation}\label{split exact for surf}
		0\to \ker(p_\Psi)\hookrightarrow\fix\Psi \xrightarrow{p_\Psi} p(\fix\Psi)\to 1,
	\end{equation}
	where
	\begin{eqnarray}\label{eq s5.5}
		p(\fix\Psi)=\{u\in \fix\phi \mid \exists \textbf{a}=\textbf{aQ}+\textbf{uP} \}\vartriangleleft \fix\phi,
	\end{eqnarray}
	is a normal subgroup of $\fix\phi\cong F_k (k<2g)\mbox{ or }\pi_1(\Sigma_g)$, and
	\begin{equation}\label{equ s6}
		\ker(p_\Psi)=\{\textbf{t}^\textbf{a}\in \Z^m\mid\textbf{a}=\textbf{aQ}\}\cong \Z^s ~(s\leq m).
	\end{equation}
	
	Since the exact sequence Eq. (\ref{split exact for surf}) is split (see the proof of \cite[Lemma 2.4]{LWZ25}), we may get a monomorphism $\iota:p(\fix\Psi)\to \fix\Psi$. Note that $\ker(p_\Psi)\cong \Z^s$ is abelian, by straightforward calculations, we can see the following
	map is an isomorphism:
	\begin{eqnarray}
		\Psi: \fix\Psi &\to& \iota(p(\fix\Psi))\times \ker(p_\Psi)\notag\\
		h&\mapsto& \big((\iota\comp p)(h), ~~h\cdot(\iota\comp p)(h^{-1})\big)\notag.
	\end{eqnarray}
	Therefore,
	\begin{equation}\label{eq s5}
		\fix\Psi=\iota(p(\fix\Psi))\times \ker(p_\Psi)\cong p(\fix\Psi)\times\Z^s (s\leq m).
	\end{equation}
	
Recall that $p(\fix\Psi)$ is a free group or a surface group by Eq. (\ref{eq s5.5}). Hence the conclusion of Theorem \ref{thm for surf>2} is clear when $0<s<m$. Below we only need to consider the cases when $s=m$ or $0$.\\
	
Case ($s=m$): $\fix\Psi\cong p(\fix\Psi) \times \mathbb{Z}^m.$
	By Eq. (\ref{equ s6}) and Eq. (\ref{eq s5}),
	$$\ker(p_\Psi)=\{\textbf{t}^\textbf{a}\in \Z^m\mid\textbf{a}=\textbf{aQ}\}=\Z^m,$$
	and hence $ \textbf{Q} = \textbf{I} $. Thus, Eq. (\ref{eq s5.5}) becomes
	$$p(\fix\Psi)=\{u\in \fix\phi \mid \textbf{uP}=\textbf{0} \},$$
	which is the kernel of the epimorphism
	$$\gamma: \fix\phi\to \gamma(\fix\phi)\leq \mathbb{Z}^m, \quad u\mapsto \textbf{uP},$$
	and hence a normal subgroup of $\fix\phi\cong F_k(k< 2g)$ or $\pi_1(\Sigma_g)$ with index
	$$[\fix\phi: p(\fix\Psi)]=[F_k:\ker\gamma]=|\gamma(\fix\phi)|=1 \mbox{ or } \aleph_0,$$
	or
	$$[\fix\phi: p(\fix\Psi)]=[\pi_1(\Sigma_g):\ker\gamma]=|\gamma(\fix\phi)|=1 \mbox{ or } \aleph_0.$$
	When $|\gamma(\fix\phi)|=1$, we have $p(\Psi)\cong F_k (k<2g)\mbox{ or }\pi_1(\Sigma_g)$; when $|\gamma(\fix\phi)|=\aleph_0$, we have $p(\Psi)\cong F_{\aleph_0}$ or trivial, by the fact that an infinite index nontrivial normal subgroup of $\pi_1(\Sigma_g)$ is $F_{\aleph_0}$
(see Lemma \ref{F_2 subgp} and Lemma \ref{subgp of surface}), and hence
	$$\fix\Psi\not\cong F_{\aleph_0}, ~F_t\times\mathbb{Z}^m (t\geq 2g),~\pi_1(\Sigma_n) \times \mathbb{Z}^m(n > g).$$
	
Case ($s=0$): $\fix\Psi\cong p(\fix\Psi).$  In this case,
	$$\ker(p_\Psi)=\{\textbf{t}^\textbf{a}\in \Z^m\mid\textbf{a}(\textbf{I}-\textbf{Q})=\textbf{0}\}=\textbf{0}.$$
	Thus the integer matrix $\textbf{I}-\textbf{Q}$ has an inverse matrix $(\textbf{I}-\textbf{Q})^{-1}\in \mathcal{M}_m(\mathbb{\Q})$, and $d(\textbf{I}-\textbf{Q})^{-1}\in \mathcal{M}_m(\mathbb{Z})$ is an integer matrix for $d=\det(\textbf{I}-\textbf{Q})$. It implies that, for every $u\in \fix\phi$, the equation
	$$\textbf{a}=\textbf{aQ}+d\textbf{uP}$$
	in Eq. (\ref{eq s5.5}) always has a unique solution
	$$\textbf{a}=d\textbf{uP}(\textbf{I}-\textbf{Q})^{-1}=\textbf{uP}(d(\textbf{I}-\textbf{Q})^{-1})\in \Z^m.$$
	Note that $d\textbf{u}$ is the abelianization of $u^d\in \fix\phi$. Therefore, by Eq. (\ref{eq s5.5}), the subgroup
	$$\Gamma_d:=\langle u^d,[u,v]\mid u,v\in\fix\phi\rangle \subset p(\fix\Psi)$$
	has index
	$$[\fix\phi : p(\fix\Psi)]\leq[\fix\phi : \Gamma_d]\leq d^k,k=\rk(\fix\phi).$$
	Recall that $\fix\phi\cong \pi_1(\Sigma_g)$ or $F_k$ for some $k< 2g$, and hence $p(\fix\Psi)$ is a free group of finite rank or a surface group. So
	$$\fix\Psi\cong p(\fix\Psi)\not\cong F_{\aleph_0}, ~F_t\times\mathbb{Z}^m (t\geq 2g),~\pi_1(\Sigma_n) \times \mathbb{Z}^m(n > g).$$
The proof is finished.
\end{proof}

\subsection{Proofs of Theorem \ref{main thm 2-surf} and Theorem \ref{main thm 4-Z surf}}

\begin{proof}[\textbf{Proof of Theorem \ref{main thm 2-surf}}]
 Item (1) implies Item (2) clearly, Item (2) implies Item (3) by Theorem \ref{thm for surf>2}. Furthermore, Item (3) and Item (4) are equivalent by Lemma \ref{surface subgp}, and Item (1) and Item (3) are equivalent by \cite[Theorem 1.4]{LWZ25}.
\end{proof}

Now, we consider the group $\pi_1(\Sigma_g)\times\Z$. The aut-fixed subgroups in this case are the following \cite[Theorem 5.3]{LWZ25}.

\begin{thm}[Lei-Wang-Zhang, \cite{LWZ25}]\label{4.8}
	A subgroup of $\pi_1(\Sigma_g)\times \mathbb{Z}$ ($g\geq 2$) is aut-fixed up to isomorphism if and only if it has one of the following forms:
	$$F_{2t-1} ~(1\leq t< 2g), \quad \pi_1(\Sigma_{2g-1}), \quad F_{\aleph_0}\times \mathbb{Z},$$
	$$\pi_1(\Sigma_g)\times \mathbb{Z}^s, \quad F_t\times \mathbb{Z}^s ~(0\leq t<2g, ~s=0,1).$$
\end{thm}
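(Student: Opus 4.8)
The plan is to combine the description of $\aut(G)$ for $G=\pi_1(\Sigma_g)\times\Z$ with the Jiang--Wang--Zhang structure of fixed subgroups of surface-group automorphisms, and then read off both implications via the index formulas of Lemma \ref{F_2 subgp} and Lemma \ref{subgp of surface}. By Item (3) of Proposition \ref{Hopf non co-hopf prop} (with $m=1$), every $\Psi\in\aut(G)$ equals $\Psi_{\phi,\mathbf{Q},\mathbf{P}}$ with $\phi\in\aut(\pi_1(\Sigma_g))$, $\mathbf{Q}=q\in\{1,-1\}$ (since $\det\mathbf{Q}=\pm1$ and $\mathbf{Q}$ is $1\times1$), and $\mathbf{P}\in\mathcal{M}_{2g\times1}(\Z)$, so that
\[
\fix\Psi=\{u\,t^a\mid u\in\fix\phi,\ (1-q)\,a=\mathbf{u}\mathbf{P}\}.
\]
The one external input I would invoke is the structural fact (\cite{JWZ11}) that for $\phi\in\aut(\pi_1(\Sigma_g))$ the subgroup $\fix\phi$ is either $\pi_1(\Sigma_g)$, which occurs exactly when $\phi=\id$, or a finitely generated free group of rank $r<2g$; in particular $\fix\phi$ is never a proper finite-index subgroup. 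The bound $r<2g$ is what produces the cutoff $t<2g$ throughout the list.

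For the implication that every $\fix\Psi$ lies in the list I would split on $q$. If $q=1$, then $a$ is unconstrained and $\fix\Psi\cong K\times\Z$ with $K=\ker\!\big(\fix\phi\to\Z,\ u\mapsto\mathbf{u}\mathbf{P}\big)$: when this map is trivial $K=\fix\phi$, giving $\pi_1(\Sigma_g)\times\Z$ or $F_r\times\Z$; when it is nontrivial $K$ is an infinite-index normal subgroup of $\fix\phi$, hence $K\cong F_{\aleph_0}$ if $\rk\fix\phi\ge2$ or $\fix\phi=\pi_1(\Sigma_g)$ (Lemma \ref{F_2 subgp}, Lemma \ref{subgp of surface}(4)) and $K=1$ if $\rk\fix\phi\le1$, giving $F_{\aleph_0}\times\Z$ or $\Z=F_0\times\Z$. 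If $q=-1$, then for each $u\in\fix\phi$ the equation $2a=\mathbf{u}\mathbf{P}$ has at most one solution, so $\fix\Psi\cong\{u\in\fix\phi\mid\mathbf{u}\mathbf{P}\text{ is even}\}$, a subgroup of index $1$ or $2$ in $\fix\phi$; index $1$ returns $\fix\phi$ itself ($\pi_1(\Sigma_g)$ or $F_r$), while index $2$ gives, by Lemma \ref{F_2 subgp} and Lemma \ref{subgp of surface}(2), $F_{2(r-1)+1}=F_{2r-1}$ with $1\le r<2g$ when $\fix\phi\cong F_r$, and $\pi_1(\Sigma_{2(g-1)+1})=\pi_1(\Sigma_{2g-1})$ when $\fix\phi=\pi_1(\Sigma_g)$. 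Collecting cases, $\fix\Psi$ is always one of the listed groups.

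For the converse I would feed three parameter choices into the same formula. With $\phi=\id$: $(q,\mathbf{P})=(1,\mathbf{0})$ gives $\pi_1(\Sigma_g)\times\Z$; $(q,\mathbf{P})=(-1,\mathbf{0})$ gives $\pi_1(\Sigma_g)$; and $q=-1$ with $\mathbf{u}\mathbf{P}=1$ for $u=x_1$ gives the index-$2$ subgroup $\pi_1(\Sigma_{2g-1})$. For the free entries one needs, for each $0\le t<2g$, an automorphism $\phi_t$ of $\pi_1(\Sigma_g)$ with $\fix\phi_t\cong F_t$, arranged (for $t\ge1$) so that $\fix\phi_t$ has nontrivial image in $H_1(\Sigma_g;\Z/2)$ --- the standard geometric constructions (the identity on an essential subsurface with boundary, a fixed-point-free map on the complement; or suitable Dehn twists and finite-order mapping classes) have this property. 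Granting such $\phi_t$: $(q,\mathbf{P})=(1,\mathbf{0})$ yields $F_t\times\Z$; $(q,\mathbf{P})=(-1,\mathbf{0})$ yields $F_t$; $q=-1$ with $\mathbf{P}$ chosen so that $u\mapsto\mathbf{u}\mathbf{P}\bmod2$ surjects onto $\Z/2$ on $\fix\phi_t$ yields the index-$2$ subgroup $F_{2t-1}$; and the same $\mathbf{P}$ with $q=1$ and $t\ge2$ yields $F_{\aleph_0}\times\Z$.

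The main obstacle is the realization step: producing automorphisms $\phi_t$ of the closed surface group with free fixed subgroup of each prescribed rank $t<2g$ and with enough control over the image of $\fix\phi_t$ in $H_1(\Sigma_g;\Z/2)$ to reach the index-$2$ subgroups $F_{2t-1}$. The forward implication is, by contrast, essentially bookkeeping once Proposition \ref{Hopf non co-hopf prop} and the \cite{JWZ11} structure theorem are available, and it runs parallel to the argument already given for $m\ge2$ in the proof of Theorem \ref{thm for surf>2}.
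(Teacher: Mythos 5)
This statement is imported verbatim from \cite{LWZ25} (Theorem 5.3 there); the present paper gives no proof of it, so the only internal point of comparison is the proof of Theorem \ref{thm for surf>2}, whose $s=0$ and $s=m$ case analysis your forward direction reproduces in the special case $m=1$. That half of your argument is correct and complete: Proposition \ref{Hopf non co-hopf prop}(3) reduces an automorphism to $\Psi_{\phi,q,\mathbf{P}}$ with $q=\pm1$, the equation $(1-q)a=\mathbf{u}\mathbf{P}$ splits into the $q=1$ case ($\fix\Psi\cong K\times\Z$ with $K$ equal to $\fix\phi$, trivial, or $F_{\aleph_0}$ by Lemma \ref{F_2 subgp} and Lemma \ref{subgp of surface}(4)) and the $q=-1$ case (an index $1$ or $2$ subgroup of $\fix\phi$, handled by the Schreier and Griffiths index formulas), and the input $\fix\phi\cong F_r\ (r<2g)$ or $\pi_1(\Sigma_g)$ from \cite{JWZ11} is exactly what the paper itself quotes. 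This is clean bookkeeping and matches the intended mechanism.

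The converse, however, has a genuine gap, and it sits at the point you yourself flag: you \emph{assume} that for every $0\le t<2g$ there is an automorphism $\phi_t$ of $\pi_1(\Sigma_g)$ with $\fix\phi_t\cong F_t$, and moreover (for the entries $F_{2t-1}$ and $F_{\aleph_0}\times\Z$) that $\fix\phi_t$ can be arranged to surject onto $\Z/2$, resp.\ $\Z$, under $u\mapsto\mathbf{u}\mathbf{P}$. This realization statement is not a routine consequence of anything quoted in this paper; it is the substantive content of the ``if'' direction of the theorem, and is precisely what \cite{LWZ25} supplies (their Proposition 2.9 and Section 5 construct such automorphisms with explicit fixed generators $x_1,x_2,\dots$, which is what makes the homological conditions checkable). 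An appeal to ``the identity on an essential subsurface with a fixed-point-free map on the complement'' is not yet a proof: one must verify that the induced outer automorphism can be promoted to a genuine automorphism whose fixed subgroup is exactly the subsurface group of the prescribed rank (basepoint and Nielsen-class issues are exactly where such arguments go wrong), and that a fixed generator survives in $H_1(\Sigma_g;\Z/2)$. Until those constructions are carried out or explicitly cited, the entries $F_t\times\Z^s$ ($1\le t<2g$), $F_{2t-1}$ ($2\le t<2g$) and $F_{\aleph_0}\times\Z$ of the list are not established. The three realizations you do give with $\phi=\id$ (for $\pi_1(\Sigma_g)\times\Z$, $\pi_1(\Sigma_g)$ and $\pi_1(\Sigma_{2g-1})$) are fine.
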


However, if we consider the end-fixed subgroups in $\pi_1(\Sigma_g)\times\Z$, we will obtain a result similar to Theorem \ref{main thm 3-Z free} but different from Theorem \ref{4.8}, that is, comparing to automorphisms, it contains more end-fixed subgroups, see Theorem \ref{main thm 4-Z surf}.

\begin{proof}[\textbf{Proof of Theorem \ref{main thm 4-Z surf}}]
	By Lemma \ref{surface subgp}, every subgroup of $\pi_1(\Sigma_g)\times\mathbb{Z} (g \geq 2)$ has one of the following forms:
	$$F_{\aleph_0}\times\mathbb{Z}^s(s=0,1), ~F_t\times\mathbb{Z}^s(t \geq 0, ~s=0,1), $$
	$$\pi_1(\Sigma_{m(g-1)+1})\times\mathbb{Z}^s(m \geq 1, ~s=0,1).$$
Then Item (1) is equivalent to Theorem \ref{4.8}.

Now we prove Item (2). Note that the subgroups $F_{\aleph_0}$, $F_t\times\mathbb{Z} (t \geq 2g)$ and $\pi_1(\Sigma_n)\times\mathbb{Z} (n>g)$ are not end-fixed in $\pi_1(\Sigma_g) \times \mathbb{Z}$ by Theorem \ref{thm for surf>2}, while
	$$\pi_1(\Sigma_{2g-1}), ~F_{\aleph_0} \times \mathbb{Z}, ~\pi_1(\Sigma_{g})\times\mathbb{Z}^s (s=0,1), ~F_t\times \mathbb{Z}^s(0 \leq t < 2g, s=0,1),$$
	are aut-fixed (hence end-fixed) in $\pi_1(\Sigma_g) \times \mathbb{Z}$ by Theorem \ref{4.8}.
	
	Below we show that every subgroup $F_t (t \geq 2g)$ and $\pi_1(\Sigma_{m(g-1)+1}) (m>2)$ are also end-fixed.

For any $t \geq 2g$, let $\Psi: \pi_1(\Sigma_g)\times\mathbb{Z}\to \pi_1(\Sigma_g)\times\mathbb{Z}$ defined as
	$$(u, v)\mapsto (\phi(u), tv+\gamma(u)),$$ 	
	where $\phi \in \edo(\pi_1(\Sigma_g))$ such that $\fix \phi=\langle x_1,x_2\rangle \cong F_2$ (such $\phi$ is constructible, see \cite[Proposition 2.9]{LWZ25}) contained in
$$\pi_1(\Sigma_g)=\left<x_1, x_2,\dots, x_{2g-1}, x_{2g}\mid[x_1,x_2]\cdots[x_{2g-1},x_{2g}]\right>,$$
	and $\gamma(u)$ is the sum of powers of $x_1$ in $u$.
	Then
	\begin{eqnarray}
		\fix\Psi &=& \{(u, v)\in \pi_1(\Sigma_g)\times\mathbb{Z} \mid u\in\fix\phi, ~v=\gamma(u)/(1-t)\}\notag\\
		&\cong& \{u\in\fix\phi\mid \gamma(u)\equiv 0 \mod t-1\}\notag,
	\end{eqnarray}
	which is a subgroup of $\fix\phi \cong F_2$ with index $t-1$. So, $\fix\Psi \cong F_{t}$.
	
	For any $m>2$, let $\Psi: \pi_1(\Sigma_g)\times\mathbb{Z}\to \pi_1(\Sigma_g)\times\mathbb{Z}$ defined as
	$$(u, v)\mapsto (u, (1-m)v+\gamma(u)),$$
	where $\gamma(u)$ as above. Then
	\begin{eqnarray}
		\fix\Psi &=& \{(u, v)\in \pi_1(\Sigma_g)\times\mathbb{Z} \mid v=\gamma(u)/m\}\notag\\
		&\cong& \{u\in \pi_1(\Sigma_g)\mid \gamma(u)\equiv 0 \mod m\}\notag,
	\end{eqnarray}
	which is a subgroup of $\pi_1(\Sigma_g)$ with index $m$. Thus, $\fix\Psi \cong \pi_1(\Sigma_{m(g-1)+1})$ by Lemma \ref{subgp of surface}.
\end{proof}

Finally, we have a direct corollary of Theorem \ref{main thm 4-Z surf} as follows:

\begin{cor}
A subgroup $H$ of $\pi_1(\Sigma_g)\times\Z(g \geq 2)$, is end-fixed but not aut-fixed up to isomorphism, if and only if
 $$H \cong F_{2n} (n\geq g ), ~F_t(t \geq 4g-1), ~~\pi_1(\Sigma_k)(2g-1\neq k>g).$$
\end{cor}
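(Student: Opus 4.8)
The plan is to obtain this corollary as an immediate consequence of Theorem \ref{main thm 4-Z surf}, by intersecting its two lists; no further geometric input is needed.

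For the ``only if'' direction, suppose $H\leq\pi_1(\Sigma_g)\times\Z$ is end-fixed but not aut-fixed up to isomorphism. Since $H$ is not aut-fixed, Theorem \ref{main thm 4-Z surf}(1) forces $H$ to be isomorphic to a member of one of the six families listed there, namely $F_{\aleph_0}$, $F_t\times\Z$ with $t\geq 2g$, $\pi_1(\Sigma_k)\times\Z$ with $k>g$, $F_{2n}$ with $n\geq g$, $F_t$ with $t\geq 4g-1$, or $\pi_1(\Sigma_k)$ with $2g-1\neq k>g$. Since $H$ is end-fixed, Theorem \ref{main thm 4-Z surf}(2) rules out the first three possibilities. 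Hence $H$ is isomorphic to $F_{2n}$ with $n\geq g$, to $F_t$ with $t\geq 4g-1$, or to $\pi_1(\Sigma_k)$ with $2g-1\neq k>g$, which is exactly the asserted conclusion.

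For the ``if'' direction, suppose $H\leq\pi_1(\Sigma_g)\times\Z$ is isomorphic to a member of one of these three families. Then $H$ is either a free group of finite rank at least $4$ (since $2n\geq 2g\geq 4$ and $t\geq 4g-1\geq 7$) or a surface group $\pi_1(\Sigma_k)$ with $k\geq 3$; in both cases $H$ is finitely generated and has trivial center. Consequently $H$ is not isomorphic to $F_{\aleph_0}$ (not finitely generated), nor to any $F_t\times\Z$ or $\pi_1(\Sigma_k)\times\Z$ (these have infinite cyclic center), so Theorem \ref{main thm 4-Z surf}(2) shows $H$ is end-fixed; and since $H$ lies in the exclusion list of Theorem \ref{main thm 4-Z surf}(1), it is not aut-fixed.

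The only point requiring attention is the elementary bookkeeping that separates the three surviving families from the three ``end-fixed'' exceptions, carried out above via the coarse invariants ``finitely generated or not'' and ``center trivial or infinite cyclic''. I do not anticipate any genuine difficulty, since all of the substantive content --- in particular the explicit endomorphisms realizing $F_{2n}$, $F_t$ with $t\geq 2g$, and $\pi_1(\Sigma_{m(g-1)+1})$ as fixed subgroups --- is already established in Theorem \ref{main thm 4-Z surf} and its proof.
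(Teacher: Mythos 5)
Your proposal is correct and matches the paper's treatment: the paper states this as a direct corollary of Theorem \ref{main thm 4-Z surf} with no further argument, and your derivation is exactly the intended intersection of the two lists. The only substantive point --- that the three surviving families are not isomorphic to any of the three end-fixed exceptions --- is handled correctly by your finite-generation and trivial-center observations.
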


\section{Fixed subgroups in free-abelian times hyperbolic groups}\label{sect 5}

In this section, we consider endomorphisms of $H\times\Z^m$ for $H$ a hyperbolic group. First, we have an observation on torsion-free hyperbolic groups which can be found in some textbooks, for example \cite{BH99}.

\begin{lem}\label{lem coummte same axis}
    Let $H$ be a torsion-free hyperbolic group. If two nontrivial elements $x, y\in H$ commute, then there exists a nontrivial element $z\in H$ such that the subgroups $\langle x,y\rangle=\langle z\rangle$.
\end{lem}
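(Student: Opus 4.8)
The plan is to establish Lemma~\ref{lem coummte same axis} via the standard machinery of centralizers in torsion-free hyperbolic groups. First I would recall that in a hyperbolic group $H$, every infinite-order element $x$ is contained in a unique maximal virtually cyclic subgroup $E(x)$, which coincides with the commensurator of $\langle x\rangle$ and hence with the set of elements $g$ such that $g\langle x\rangle g^{-1}\cap\langle x\rangle$ has finite index in both. In particular the centralizer $C_H(x)$ lies inside $E(x)$. Since $H$ is torsion-free, $E(x)$ cannot contain a nontrivial finite subgroup, and a torsion-free virtually cyclic group is infinite cyclic; so $E(x)\cong\Z$.

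From there the argument is short. Given nontrivial commuting $x,y\in H$, both have infinite order (a torsion-free hyperbolic group has no nontrivial torsion), and $y\in C_H(x)\subseteq E(x)\cong\Z$. Likewise $x\in E(x)$, so the subgroup $\langle x,y\rangle$ is contained in the infinite cyclic group $E(x)$. Any subgroup of $\Z$ is cyclic, so $\langle x,y\rangle=\langle z\rangle$ for some $z$; and $z$ is nontrivial because $x\neq 1$. This is exactly the assertion.

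Alternatively, one can phrase the same fact geometrically using the action of $H$ on its Cayley graph or on the boundary $\partial H$: each infinite-order element $x$ acts as a hyperbolic isometry with a well-defined pair of fixed points $\{x^+,x^-\}$ on $\partial H$, and an element commuting with $x$ must fix this pair setwise; the stabilizer of such a pair is virtually cyclic, hence (by torsion-freeness) infinite cyclic, and contains both $x$ and $y$. This is the "same axis" viewpoint, parallel to the Poincar\'e disk remark used for surface groups earlier in the paper, and it is the formulation the authors allude to by citing \cite{BH99}.

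I do not expect a serious obstacle here: the only input is the structure theory of elementary subgroups of hyperbolic groups, which is genuinely standard and available in textbook form. The one point that must be handled with a word of care is ruling out finite subgroups in $E(x)$ — this is immediate from the torsion-freeness hypothesis, which is precisely why the lemma is stated only for torsion-free $H$ (in the presence of torsion, $E(x)$ may be a nontrivial finite-by-cyclic group and $\langle x,y\rangle$ need not be cyclic). So the proof is essentially a citation plus the observation that a torsion-free virtually cyclic group is $\Z$.
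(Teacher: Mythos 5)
Your proof is correct and is exactly the standard argument the paper is invoking: the paper gives no proof of this lemma, simply citing \cite{BH99}, and your route (centralizer of an infinite-order element lies in a virtually cyclic subgroup, which is infinite cyclic by torsion-freeness, so $\langle x,y\rangle$ is a nontrivial subgroup of $\Z$) is the textbook argument being referenced. No issues.
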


\subsection{Endomorphisms}
By applying a similar argument as in Proposition \ref{des of surfend}, we can obtain the following analogous result for endomorphisms of $H \times \Z^m$.

\begin{prop}\label{des of hyperbolic-end}
Let $G=H\times\mathbb{Z}^m (m\geq 1)$, where $H$ is a non-elementary torsion-free hyperbolic group with the first betti number $\beta_1$. Then each endomorphism $\Psi \in \edo(G)$ can be represented as one of the following two forms:
	\begin{enumerate}
		\item $\Psi_{\phi,\textbf{Q},\textbf{P}}=u\textbf{t}^\textbf{a} \mapsto \phi(u)\textbf{t}^{\textbf{aQ}+\textbf{uP}} $, where $ \phi\in \edo(H), \textbf{Q}\in \mathcal{M}_m(\mathbb{Z}) $ and $ \textbf{P}\in \mathcal{M}_{\beta_1\times m}(\mathbb{Z})$.

        \item $\Psi_{z,\textbf{l,h,Q,P}}=u\textbf{t}^\textbf{a}\mapsto z^{\textbf{al}^T+\textbf{uh}^T}\textbf{t}^{\textbf{aQ}+\textbf{uP}}$, where $1\ne z\in H$ is not a proper power, $\textbf{Q}\in \mathcal{M}_m(\mathbb{Z}) $, $ \textbf{P}\in \mathcal{M}_{\beta_1\times m}(\mathbb{Z}),\textbf{0}\ne \textbf{l} \in \mathbb{Z}^m$, and $\textbf{h}\in \mathbb{Z}^{\beta_1}$.
	\end{enumerate}
	In both cases, $\textbf{u} \in \Z^{\beta_1}$ denotes the image of the abelianization of $ u \in H $ under the projection $p$, where $p: H^{ab} \to \Z^{\beta_1}$ is the natural projection from $H^{ab}$ to its free part $\Z^{\beta_1}$.
\end{prop}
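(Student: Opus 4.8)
The plan is to imitate closely the proof of Proposition \ref{des of surfend}, since the only essential difference is that $H^{ab}$ need not be torsion-free, which forces us to pass to the free part $\Z^{\beta_1}$ before recording the ``abelian part'' of the images of the $x_i$'s. First I would fix a generating set $\{x_1,\dots,x_r\}$ of $H$ (so $r=\rk(H)$, finite since $H$ is finitely presented being hyperbolic) together with the standard basis $t_1,\dots,t_m$ of $\Z^m$, and write the image of a generator as $\Psi(x_i)=c_i\textbf{t}^{\textbf{p}_i}$ with $c_i\in H$, $\textbf{p}_i\in\Z^m$, and $\Psi(t_j)=z_j\textbf{t}^{\textbf{q}_j}$ with $z_j\in H$, $\textbf{q}_j\in\Z^m$. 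The split into two cases is dictated by whether all $z_j$ are trivial.

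In the first case, all $z_j=1$, so $\Psi(t_j)=\textbf{t}^{\textbf{q}_j}$. Setting $p:H\times\Z^m\to H$ the projection $u\textbf{t}^\textbf{a}\mapsto u$ and $\phi=(p\comp\Psi)|_H$, one checks $\phi\in\edo(H)$ and that $\Psi(u\textbf{t}^\textbf{a})=\phi(u)\textbf{t}^{\textbf{aQ}+\textbf{uP}}$ where $\textbf{Q}=(\textbf{q}_1,\dots,\textbf{q}_m)^T$ and $\textbf{P}$ is built from the $\textbf{p}_i$'s. The subtlety is the meaning of $\textbf{uP}$: the exponent of $\textbf{t}$ contributed by a word $u=u(x_1,\dots,x_r)$ is $\sum_i (\text{exponent sum of }x_i\text{ in }u)\,\textbf{p}_i$, so it depends only on the image of $u$ in $H^{ab}$. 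Since $\Z^m$ is torsion-free, any torsion class in $H^{ab}$ maps to $\textbf{0}$ under $\textbf{v}\mapsto\sum_i v_i\textbf{p}_i$; hence this map factors through the free quotient $\Z^{\beta_1}$ of $H^{ab}$, and we may legitimately take $\textbf{P}\in\mathcal{M}_{\beta_1\times m}(\Z)$ with $\textbf{u}\in\Z^{\beta_1}$ the image of the abelianization of $u$ under $p\colon H^{ab}\to\Z^{\beta_1}$. (Well-definedness of $\textbf{P}$ as a $\beta_1\times m$ matrix requires choosing a splitting $H^{ab}\cong\Z^{\beta_1}\oplus\mathrm{torsion}$; any choice works and only affects $\textbf{P}$, not $\Psi$.)

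In the second case some $z_{j_0}\ne1$. Because $t_{j_0}$ is central, $z_{j_0}$ commutes with every $z_j$ and every $c_k$; by Lemma \ref{lem coummte same axis}, iterating over the finitely many nontrivial elements among $\{z_1,\dots,z_m,c_1,\dots,c_r\}$, the subgroup they generate is contained in a common cyclic subgroup $\langle z\rangle$ with $z$ not a proper power. Writing each $z_j=z^{l_j}$ and $c_k=z^{h_k}$ and repeating the computation of case (1) yields $\Psi(u\textbf{t}^\textbf{a})=z^{\textbf{al}^T+\textbf{uh}^T}\textbf{t}^{\textbf{aQ}+\textbf{uP}}$ with $\textbf{l}=(l_1,\dots,l_m)\ne\textbf{0}$ (as $l_{j_0}\ne0$), $\textbf{h}=(h_1,\dots,h_r)$, and again the coefficient of $z$ coming from $u$ depends only on the free-part image $\textbf{u}\in\Z^{\beta_1}$, so $\textbf{h}\in\Z^{\beta_1}$ and $\textbf{P}\in\mathcal{M}_{\beta_1\times m}(\Z)$. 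Conversely, every map of either listed form is clearly an endomorphism (the relations of $H$ are respected by $\phi$ or by $u\mapsto z^{\textbf{uh}^T}$, and the $t$'s remain central), completing the characterization.

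I expect the main obstacle to be purely bookkeeping: making the reduction to $\Z^{\beta_1}$ genuinely rigorous, i.e. verifying that although the torsion of $H^{ab}$ is invisible to the formulas, the statement of the proposition is insensitive to the splitting $H^{ab}\cong\Z^{\beta_1}\oplus T$ chosen to define $\textbf{u}$ and $\textbf{P}$, and that $\beta_1$ is finite so the matrices make sense — both of which follow from $H$ being a hyperbolic group (hence of type $F_\infty$, in particular finitely generated). The commutation argument via Lemma \ref{lem coummte same axis} is the other point requiring a word of care, but it is a direct finite induction exactly as in Proposition \ref{des of surfend}.
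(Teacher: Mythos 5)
Your proposal is correct and follows essentially the same route as the paper's proof: split on whether all $z_j=\Psi(t_j)$ have trivial $H$-component, factor the ``abelian part'' of $\Psi|_H$ through $H^{ab}$ and then through its torsion-free quotient $\Z^{\beta_1}$ (using that $\Z^m$, resp. $\langle z\rangle\cong\Z$, is torsion-free), and in the second case use Lemma \ref{lem coummte same axis} to place all the nontrivial $z_j$'s and $c_k$'s inside a single maximal cyclic subgroup $\langle z\rangle$. Your extra remarks on the choice of splitting of $H^{ab}$ and on the converse direction are harmless additions not needed for the statement as phrased.
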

\begin{proof}
Let $\{x_i\}_{i=1}^n$ and $\{t_i\}_{i=1}^m$ denote the generating sets of $H$ and $\Z^m$, respectively. We consider the images of generators under the endomorphism $\Psi$. Now, we set
	\begin{eqnarray*}
        \Psi(t_i)=z_i\textbf{t}^{\textbf{q}_{i}},  \quad 1\leq i\leq m.
	\end{eqnarray*}
Let $p_1:H\times\mathbb{Z}^m\to H, ~u\textbf{t}^{\textbf{a}}\mapsto u,$ ~$p_2:H\times\mathbb{Z}^m\to \Z^m, ~u\textbf{t}^{\textbf{a}}\mapsto \textbf{t}^{\textbf{a}}$ and $\phi_i:=(p_i\comp\Psi)|_{H}$ for $i=1,2$. Then $\phi_1\in \edo(H)$, and $\phi_2$ can be expressed as
        $$\phi_2: H \xrightarrow{\lambda} H^{ab} \xrightarrow{p} \Z^{\beta_1}\xrightarrow{P} \Z^m$$
by the universal property of abelianization, where $p$ is the natural projection and $P$ is a homomorphism from $\Z^{\beta_1}$ to $\Z^m$, represented by a matrix $\textbf{P} \in \mathcal{M}_{\beta_1\times m}(\Z)$. Thus
$$\phi_2(u)=P \comp p \comp \lambda(u)=\textbf t^\textbf{uP},$$
 where $\textbf{u}=p\comp \lambda(u) \in \Z^{\beta_1}$.
There are two cases:

(1) All the $z_i$'s are trivial, namely
        $$\Psi(t_i)=\textbf{t}^{\textbf{q}_{i}}, \quad 1\leq i\leq m.$$
Then the homomorphism $\Psi$ can be expressed as
	$$ \Psi=\Psi_{\phi_1,\textbf{Q},\textbf{P}}: u\textbf{t}^\textbf{a} \mapsto \phi_1(u)\textbf{t}^{\textbf{aQ}+\textbf{uP}}, \quad\forall u\textbf{t}^\textbf{a}\in H\times\mathbb{Z}^m,$$
	where $\textbf{Q}=(\textbf{q}_{1},\dots,\textbf{q}_{m})^T\in \mathcal{M}_m(\mathbb{Z})$, $\textbf{P} \in \mathcal{M}_{\beta_1 \times m}(\mathbb{Z})$ and $\textbf{u}=p\comp \lambda(u) \in \Z^{\beta_1}$.
	
(2) Some $z_i\neq 1$. We assume $\phi_1(x_k)=c_k, ~1\leq k\leq n.$
Since $t_i$ commutes with all the $t_j$'s and $x_k$'s, we have
	$$z_iz_j=z_jz_i, \quad z_ic_k=c_kz_i.$$
By Lemma \ref{lem coummte same axis}, there exists an element (not a proper power) $z\in H$ such that
$$1\neq \left\langle z_1,\dots, z_m, c_1, \ldots, c_{n}\right\rangle \subset \left\langle z \right\rangle \cong \Z.$$
Let $\textbf{l}=(l_1, \ldots, l_m)\in \mathbb{Z}^m$ such that $z_i=z^{l_i}$.
Then $\phi_1$ can be expressed as
          $$\phi_1: H \xrightarrow{\lambda} H^{ab} \xrightarrow{p} \Z^{\beta_1}\xrightarrow{h} \langle z \rangle$$
by the universal property of abelianization, where $h$ is a homomorphism from $\Z^{\beta_1}$ to $\Z$, represented by a matrix $\textbf{h} \in \Z^{\beta_1}$. Thus $\phi_1(u)=h\comp p\comp \lambda(u)=z^{\textbf{uh}^T}$, and we get
	$$\Psi=\Psi_{z,\textbf{l},\textbf{h},\textbf{Q},\textbf{P}}: u\textbf{t}^\textbf{a}\mapsto z^{\textbf{al}^T+\textbf{uh}^T}\textbf{t}^{\textbf{aQ}+\textbf{uP}},$$
	where $1\ne z\in H$, $\textbf{Q}\in \mathcal{M}_m(\mathbb{Z})$, $ \textbf{P}\in \mathcal{M}_{\beta_1\times m}(\mathbb{Z})$, $\textbf{0}\ne \textbf{l} \in \mathbb{Z}^m$, and $\textbf{h}\in \mathbb{Z}^{\beta_1}$.
\end{proof}

\subsection{Fixed subgroups}
Now, we can easily obtain the following corollary on fixed subgroups.

\begin{cor}\label{cor betti=0}
    Let $H$ be a non-elementary torsion-free hyperbolic group. If the first betti number $\beta_1(H)=0$, then every end-fixed subgroup of $G=H\times\Z^k\ (k\geq 1)$ is isomorphic to $\Z^s\ (s\leq k+1)$, or $N\times \Z^s\ (s\leq k)$ for $N\subset H$ an end-fixed subgroup of $H$.
\end{cor}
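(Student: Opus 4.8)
The plan is to combine the structural description of endomorphisms in Proposition~\ref{des of hyperbolic-end} with exactly the same splitting argument used in the proofs of Theorem~\ref{Thm 3.1} and Theorem~\ref{thm for surf>2}. Since $\beta_1(H)=0$, the matrix $\textbf{P}$ in both types of endomorphism lies in $\mathcal{M}_{0\times m}(\Z)$, i.e. the term $\textbf{uP}$ disappears entirely: every $\Psi\in\edo(G)$ has the form $u\textbf{t}^\textbf{a}\mapsto \phi(u)\textbf{t}^{\textbf{aQ}}$ with $\phi\in\edo(H)$ (type (1)), or the form $u\textbf{t}^\textbf{a}\mapsto z^{\textbf{al}^T}\textbf{t}^{\textbf{aQ}}$ with image abelian (type (2)). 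So first I would dispose of the type (2) case: the image, hence $\fix\Psi$, is a finitely generated abelian group sitting inside $G$, so it is free abelian of rank at most $k+1$ (it embeds in $H\times\Z^k$ and being abelian it embeds in a maximal abelian subgroup, which by Lemma~\ref{lem coummte same axis} has the form $\Z\times\Z^k$ after intersecting with the factors, giving rank $\le k+1$); thus $\fix\Psi\cong\Z^s$ with $s\le k+1$.

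For a type (1) endomorphism $\Psi=\Psi_{\phi,\textbf{Q}}$ I would run the projection argument verbatim. Consider $p\colon H\times\Z^k\to H$, $u\textbf{t}^\textbf{a}\mapsto u$, and its restriction $p_\Psi\colon\fix\Psi\to p(\fix\Psi)\le H$. Because $\textbf{uP}$ has vanished, the condition cutting out $\fix\Psi$ is simply $u=\phi(u)$ together with $\textbf{a}=\textbf{aQ}$, and these two conditions are completely decoupled: $p(\fix\Psi)=\{u\in\fix\phi\mid \exists\,\textbf{a},\ \textbf{a}=\textbf{aQ}\}=\fix\phi$ (the existence clause is vacuous, $\textbf{a}=\textbf{0}$ always works), and $\ker(p_\Psi)=\{\textbf{t}^\textbf{a}\mid\textbf{a}=\textbf{aQ}\}\cong\Z^s$ with $s\le k$. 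The short exact sequence $0\to\ker(p_\Psi)\to\fix\Psi\xrightarrow{p_\Psi}p(\fix\Psi)\to 1$ splits (a set-theoretic section $u\mapsto(u,\textbf{0})$ is a homomorphism since the $\Z^k$-coordinate is additive in $\textbf{a}$ alone and here it is just $0$), and since the kernel is central we get $\fix\Psi\cong\fix\phi\times\Z^s$. Setting $N=\fix\phi$, which is by definition an end-fixed subgroup of $H$, and noting $s\le k$, we obtain $\fix\Psi\cong N\times\Z^s$ as claimed. Together with the type (2) case $\fix\Psi\cong\Z^s$ ($s\le k+1$), this exhausts all endomorphisms and proves the corollary.

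I do not expect a genuine obstacle here; the work was already done in Proposition~\ref{des of hyperbolic-end}, and the case $\beta_1(H)=0$ is precisely the degenerate case that makes the fixed-subgroup analysis trivial (no mixing between the $H$-part and the $\Z^k$-part). The one point requiring a sentence of care is the rank bound $s\le k+1$ in the type (2) case: one should observe that a finitely generated abelian subgroup $A$ of $H\times\Z^k$ projects to a cyclic subgroup of $H$ (two commuting elements of the torsion-free hyperbolic group $H$ generate a cyclic group by Lemma~\ref{lem coummte same axis}), so $A$ sits inside $\Z\times\Z^k$ and therefore $A\cong\Z^s$ with $s\le k+1$. Everything else is a direct transcription of the arguments already given in Sections~\ref{sect 3} and~\ref{sect 4}.
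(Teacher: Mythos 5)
Your proposal is correct and follows essentially the same route as the paper: specialize Proposition \ref{des of hyperbolic-end} with $\beta_1(H)=0$ so that $\textbf{P}$ and $\textbf{h}$ vanish, handle the abelian-image type (2) case by the containment $\fix\Psi\subseteq\langle z\rangle\times\Z^k\cong\Z^{k+1}$, and in the type (1) case observe that the conditions $u=\phi(u)$ and $\textbf{a}=\textbf{aQ}$ decouple to give $\fix\Psi\cong\fix\phi\times\Z^s$ with $s\leq k$. The paper simply writes this decoupling directly rather than invoking the split exact sequence, but the content is identical.
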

\begin{proof}
Let $\Psi$ be an endomorphism of $G=H\times\Z^k$. Then, according to Proposition \ref{des of hyperbolic-end}, $\Psi=\Psi_{\phi,\textbf{Q},\textbf{P}}$ or $\Psi_{z,\textbf{l},\textbf{h},\textbf{Q},\textbf{P}}$. Since $\beta_1(H)$ is zero, we have $\textbf{P}=\textbf{0}$ and $\textbf{uh}^T=0$, i.e.,
$$\Psi: u\textbf{t}^\textbf{a}\mapsto \phi(u)\textbf{t}^\textbf{aQ}\ \mbox{ or }\ u\textbf{t}^\textbf{a}\mapsto z^{\textbf{al}^T}\textbf{t}^\textbf{aQ}.$$
Now we have two cases:

Case ($\Psi=\Psi_{\phi,\textbf{Q},\textbf{0}}$). Then
\begin{eqnarray*}
    \fix\Psi&=&\{u\textbf{t}^\textbf{a}\mid u\textbf{t}^\textbf{a}=\phi(u)\textbf{t}^\textbf{aQ}\}\\
    &=&\{u\textbf{t}^\textbf{a}\mid u=\phi(u), ~\textbf{a}(\textbf{I}-\textbf{Q})=\textbf{0} \}\\
    &\cong& \fix\phi\times\Z^s,
\end{eqnarray*}
where $s\leq k$.

Case ($\Psi=\Psi_{z,\textbf{l},\textbf{0},\textbf{Q},\textbf{0}}$). Then
$$\fix\Psi\subset\langle z\rangle\times\Z^k\cong \Z^{k+1},$$
and hence $\fix\Psi\cong\Z^s$ for some $s\leq k+1$.
\end{proof}

Moreover, when $\beta_1(H)$ is non-zero, the following proposition \cite[Proposition 6.1 (2)]{LWZ25} shows the existence of infinitely many aut-fixed subgroups of $H\times\Z^k\ (k\geq 2)$.

\begin{prop}[Lei-Wang-Zhang, \cite{LWZ25}]\label{lwz25 prop 6.1 (2)}
    Let $H$ be a non-elementary torsion-free hyperbolic group and $G=H\times\Z^k\ (k\geq 2)$. If $\beta_1(H)$ is non-zero, then for any $m\geq1$ and any $0\leq s<k$, there exists $\phi\in\aut(G)$ such that $\fix \phi\cong N\times \Z^s$, where $N$ is a normal subgroup of $H$ with index $[H:N]=m$.
\end{prop}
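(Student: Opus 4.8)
The plan is to realize the desired $\phi$ explicitly as a first-type endomorphism $\Psi_{\id,\textbf{Q},\textbf{P}}$ in the notation of Proposition \ref{des of hyperbolic-end}, taking the identity on the $H$-factor and a carefully chosen \emph{unimodular} matrix $\textbf{Q}\in\mathcal{M}_k(\Z)$. By the automorphism criterion for $H\times\Z^k$ (proved exactly as Proposition \ref{Hopf non co-hopf prop}, using that $H$ is Hopfian), such a map is an automorphism precisely when $\det\textbf{Q}=\pm1$, so this is the constraint I must respect throughout. Running the exact-sequence argument of Theorem \ref{thm for surf>2} with $\phi=\id$ gives $\fix\Psi\cong p(\fix\Psi)\times\Z^s$, where $\ker(p_\Psi)=\{\textbf{a}\in\Z^k\mid \textbf{a}(\textbf{I}-\textbf{Q})=\textbf{0}\}\cong\Z^s$ is the left kernel of $\textbf{I}-\textbf{Q}$, and $p(\fix\Psi)=\{u\in H\mid \textbf{uP}\in\im(\textbf{I}-\textbf{Q})\}$, with $\im(\textbf{I}-\textbf{Q})$ the $\Z$-image of $\textbf{a}\mapsto\textbf{a}(\textbf{I}-\textbf{Q})$. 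Thus it suffices to choose $\textbf{Q}$ unimodular so that this left kernel has rank exactly $s$, and $\textbf{P}$ so that $p(\fix\Psi)$ is a normal subgroup of index $m$.

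For $s\ge1$ I would take $\textbf{Q}=\textbf{I}+m\textbf{N}$, where $\textbf{N}=\sum_{i=1}^{k-s}E_{s+i,\,i}$ is the nilpotent matrix whose $\Z$-image is $\langle e_1,\dots,e_{k-s}\rangle$ and whose left kernel is $\langle e_1,\dots,e_s\rangle$. Then $\textbf{Q}$ is lower triangular with $1$'s on the diagonal, so $\det\textbf{Q}=1$, while $\textbf{I}-\textbf{Q}=-m\textbf{N}$ has left kernel $\cong\Z^s$ and image $\langle me_1,\dots,me_{k-s}\rangle$. Since $\beta_1(H)\neq0$ there is a surjection $\rho\colon H\to\Z$ factoring through the free part of $H^{\mathrm{ab}}$, say $\rho(u)=\textbf{u}\textbf{r}^T$ for the primitive vector $\textbf{r}=(1,0,\dots,0)\in\Z^{\beta_1}$; setting $\textbf{P}=\textbf{r}^T e_1$ (with $e_1\in\Z^k$) gives $\textbf{uP}=\rho(u)e_1$, so the membership condition $\textbf{uP}\in\im(\textbf{I}-\textbf{Q})$ becomes $m\mid\rho(u)$. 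Hence $p(\fix\Psi)=\ker\big(H\xrightarrow{\rho}\Z\to\Z/m\big)=:N$ is normal of index exactly $m$, and $\fix\Psi\cong N\times\Z^s$.

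For the remaining case $s=0$ a full-rank nilpotent $\textbf{Q}-\textbf{I}$ is impossible, so instead I would take $\textbf{Q}$ to be the companion matrix of $p(x)=x^k+m\,x^{k-1}-1$ (here $k\ge2$ supplies the needed freedom among the coefficients). Then $\det\textbf{Q}=(-1)^{k+1}=\pm1$ and $\det(\textbf{I}-\textbf{Q})=p(1)=m\neq0$, so the left kernel is trivial ($s=0$) and the cokernel $\Z^k/\im(\textbf{I}-\textbf{Q})\cong\Z[x]/(p(x),x-1)\cong\Z/m$ is cyclic; choosing $\textbf{P}$ so that $u\mapsto\textbf{uP}$ surjects onto this cokernel again yields $\fix\Psi\cong N$ with $N$ normal of index $m$.

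Two points need care and constitute the main work. First, I must record the automorphism criterion for $H\times\Z^k$, confirming that $\det\textbf{Q}=\pm1$ together with $\phi=\id$ indeed yields an automorphism; this is where the tension between ``unimodular $\textbf{Q}$'' and ``index $m$'' lives, and it is precisely why the index must be produced through the \emph{cokernel} of $\textbf{I}-\textbf{Q}$ rather than through $\det\textbf{Q}$ — contrast the endomorphism constructions in Theorems \ref{main thm 3-Z free} and \ref{main thm 4-Z surf}, where $\det\textbf{Q}=1-t$ was permitted. Second, I must justify that the central extension $0\to\Z^s\to\fix\Psi\to N\to1$ splits even though $N$ need not be free: since $\ker(p_\Psi)\subseteq\Z^k$ is central in $G$, and since for $u\in N$ a particular solution $\textbf{a}_0(u)$ of $\textbf{a}(\textbf{I}-\textbf{Q})=\textbf{uP}$ may be chosen $\Z$-linearly in $\textbf{u}$ (explicitly $\textbf{a}_0(u)=-\tfrac{\rho(u)}{m}\,e_{s+1}$ when $s\ge1$), the map $u\mapsto u\,\textbf{t}^{\textbf{a}_0(u)}$ is a homomorphic section, giving the central direct product $\fix\Psi\cong N\times\Z^s$ and completing the argument.
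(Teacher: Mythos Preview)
The paper does not supply its own proof of this proposition: it is quoted from \cite[Proposition~6.1(2)]{LWZ25} and used as a black box in the proof of Theorem~\ref{thm hyper. times Z^m finite end-fixed subgps}. The only internal evidence about the intended argument is that the paper calls its proof of the neighbouring Proposition~\ref{prop end-fixed subgps of H times Z} (the $k=1$, endomorphism analogue) ``a similar construction'', and there the map is $(u,t)\mapsto(u,\gamma(u)+(m{+}1)t)=\Psi_{\id,\,m+1,\,\gamma}$. Your proposal --- also of the form $\Psi_{\id,\textbf{Q},\textbf{P}}$, but with unimodular $\textbf{Q}$ because you must produce an \emph{automorphism} --- is therefore exactly in the spirit of the cited construction.

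Your argument is correct. For $1\le s<k$ the unipotent choice $\textbf{Q}=\textbf{I}+m\sum_{i=1}^{k-s}E_{s+i,i}$ has $\det\textbf{Q}=1$, left kernel $\langle e_1,\dots,e_s\rangle$ for $\textbf{I}-\textbf{Q}$, and image $m\langle e_1,\dots,e_{k-s}\rangle$; with $\textbf{uP}=\rho(u)e_1$ the condition $\textbf{uP}\in\im(\textbf{I}-\textbf{Q})$ reads $m\mid\rho(u)$, and your explicit $\Z$-linear section $u\mapsto u\,\textbf{t}^{-(\rho(u)/m)e_{s+1}}$ splits the resulting central extension, so $\fix\Psi\cong N\times\Z^s$. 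For $s=0$ --- this is precisely where the hypothesis $k\ge2$ is used --- the companion matrix of $p(x)=x^k+mx^{k-1}-1$ has $\det\textbf{Q}=(-1)^{k+1}$ and $\det(\textbf{I}-\textbf{Q})=p(1)=m$; identifying $\Z^k\cong\Z[x]/(p(x))$ with $e_1\leftrightarrow1$ shows the cokernel of $\textbf{I}-\textbf{Q}$ is $\Z/m$ generated by the class of $e_1$, so the same $\textbf{P}$ works, and since $\ker(p_\Psi)=0$ one gets $\fix\Psi\cong N$ directly. One small simplification worth recording: because you take $\phi=\id_H$, you do not need the full analogue of Proposition~\ref{Hopf non co-hopf prop} for hyperbolic $H$ --- the inverse of $\Psi_{\id,\textbf{Q},\textbf{P}}$ is visibly $\Psi_{\id,\textbf{Q}^{-1},-\textbf{P}\textbf{Q}^{-1}}$.
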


Besides, for $H\times \Z$, we have a similar results about the end-fixed subgroups by a similar construction as in \cite[Proposition 6.1 (2)]{LWZ25}.

\begin{prop}\label{prop end-fixed subgps of H times Z}
    Let $H$ be a non-elementary torsion-free hyperbolic group and $G=H\times \Z$. If $\beta_1(H)$ is non-zero, then for any $m\geq 1$, there exists $\phi\in\edo(G)$ such that $\fix \phi \cong N$, where $N$ is a normal subgroup of $H$ with index $[H:N]=m$.
\end{prop}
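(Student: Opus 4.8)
The plan is to imitate the explicit constructions used in the proof of Theorem \ref{main thm 4-Z surf} and in \cite[Proposition 6.1 (2)]{LWZ25}: I would exhibit a single endomorphism of $G=H\times\Z$ whose fixed subgroup is the required index-$m$ normal subgroup of $H$.

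First, since $\beta_1(H)\neq 0$ (and $H$, being hyperbolic, has finitely generated abelianization), there is a \emph{surjective} homomorphism $\gamma\colon H\to\Z$, obtained by composing $H\to H^{ab}$ with a projection onto a free $\Z$-summand of $H^{ab}$. Fix $m\geq 1$ and define $\Psi\colon H\times\Z\to H\times\Z$ by
$$\Psi(u,v)=\big(u,\ (1-m)v+\gamma(u)\big).$$
In the notation of Proposition \ref{des of hyperbolic-end} this is the first-kind map $\Psi_{\id_H,\textbf{Q},\textbf{P}}$ with $\phi=\id_H\in\edo(H)$, $\textbf{Q}=(1-m)\in\mathcal{M}_1(\Z)$, and $\textbf{P}\in\mathcal{M}_{\beta_1\times 1}(\Z)$ the matrix representing $\gamma$ via the abelianization; hence $\Psi$ is a genuine endomorphism of $G$. (One can also check this directly: the first coordinate of $\Psi$ is the projection onto $H$, and $(u,v)\mapsto(1-m)v+\gamma(u)$ is a homomorphism $G\to\Z$ because $\gamma$ is, so $\Psi$ is a homomorphism.)

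Next I would compute
$$\fix\Psi=\{(u,v)\in H\times\Z\mid (1-m)v+\gamma(u)=v\}=\{(u,v)\mid \gamma(u)=mv\}.$$
The coordinate projection $q\colon H\times\Z\to H$, $(u,v)\mapsto u$, restricts to an injective homomorphism on $\fix\Psi$, since the second coordinate of a fixed point is forced to equal $\gamma(u)/m$; its image is $q(\fix\Psi)=\{u\in H\mid\gamma(u)\in m\Z\}=\gamma^{-1}(m\Z)=:N$, so $\fix\Psi\cong N$. Finally, as $\gamma$ is onto, the composite $H\xrightarrow{\gamma}\Z\twoheadrightarrow\Z/m\Z$ is onto with kernel $N$; hence $H/N\cong\Z/m\Z$, so $N$ is normal in $H$ of index $m$, as desired.

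There is no serious obstacle here: the existence of $\gamma$ is exactly what the hypothesis $\beta_1(H)>0$ supplies, the fact that the displayed formula defines an endomorphism is (the converse direction of) Proposition \ref{des of hyperbolic-end}, and the only point needing a little care is that $q$ restricted to $\fix\Psi$ is an isomorphism onto $\gamma^{-1}(m\Z)$ rather than merely a surjection — which is immediate from the uniqueness of the second coordinate noted above.
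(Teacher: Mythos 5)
Your proof is correct and follows essentially the same route as the paper: both use the epimorphism $\gamma\colon H\to\Z$ supplied by $\beta_1(H)\neq 0$ to build an endomorphism that is the identity on the $H$-factor and an affine map $v\mapsto cv+\gamma(u)$ on the $\Z$-factor, forcing $\fix\Psi\cong\gamma^{-1}(m\Z)$. The only (immaterial) difference is the choice of coefficient — the paper takes $t\mapsto\gamma(u)+(m+1)t$ so that the fixed-point equation reads $\gamma(u)=-mt$, while you take $(1-m)v+\gamma(u)$ giving $\gamma(u)=mv$ — and both yield the same normal subgroup of index $m$.
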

\begin{proof}
Denote the elements of $H\times\Z$ as the form of $(u,t)$ for every $u\in H$ and $t\in\Z$.
    Since $\beta_1(H)$ is non-zero, then there exists an epimorphism $\gamma:H\to \Z$.
    For any $m\geq 1$, we can define an endomorphism:
    $$\phi_m:H\times \Z\to H\times\Z, ~~(u,t)\mapsto (u,\gamma(u)+(m+1)t).$$
    Then,
    \begin{eqnarray*}
        \fix(\phi_m)&=&\{(u,t)\in H\times\Z\mid t=\gamma(u)+(m+1)t\}\\
        &=&\{(u,t)\in H\times \Z\mid \gamma(u)=-mt\}\\
        &\cong&\{u\in H\mid \gamma(u)\equiv 0\mod m\}:=N,
    \end{eqnarray*}
where $N$ is a normal subgroup of $H$ with index $[H:N]=m\geq 1$.
\end{proof}

A group is called \textit{finite index rigid} if it does not contain isomorphic finite index subgroups of different indices. Lazarovich \cite{La25} showed:

\begin{thm}[Lazarovich]\label{finite index rigid}
    Every non-elementary hyperbolic group is finite index rigid.
\end{thm}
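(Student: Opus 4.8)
The plan is to produce an isomorphism invariant of hyperbolic groups that is multiplicative under passage to finite-index subgroups and is nonzero; such an invariant forces equal indices. Concretely, suppose $H_1\cong H_2$ are finite-index subgroups of a non-elementary hyperbolic group $H$, and let $I$ be any invariant satisfying $I(K)=[H:K]\cdot I(H)$ for every finite-index $K\leq H$. Since $I(H_1)=I(H_2)$ by isomorphism invariance and $I(H_i)=[H:H_i]\cdot I(H)$, the hypothesis $I(H)\neq 0$ immediately yields $[H:H_1]=[H:H_2]$. So the whole problem reduces to exhibiting, for every non-elementary hyperbolic $H$, one such nonvanishing multiplicative invariant. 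A subtlety worth flagging at the outset is that finite-index subgroups of $H$ are quasi-isometric to $H$ and share its Gromov boundary, so no quasi-isometry invariant (growth rate, conformal dimension, boundary topology) can possibly detect the index; the invariant must be commensurability-covariant rather than coarse.

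First I would set up the two standard candidates. Hyperbolic groups act geometrically (cocompactly, with finite stabilizers) on the Rips complex, hence admit a cocompact model for the classifying space for proper actions, so the rational Euler characteristic $\chi(H)\in\Q$ is defined and satisfies $\chi(K)=[H:K]\,\chi(H)$. When $\chi(H)\neq 0$ the theorem follows at once by the reduction above; this already disposes of free groups ($\chi(F_k)=1-k$), surface groups ($\chi=2-2g$), and more generally every non-elementary hyperbolic group of nonzero Euler characteristic. The parallel analytic candidates are the $\ell^2$-Betti numbers, which obey the restriction formula $b_p^{(2)}(K)=[H:K]\,b_p^{(2)}(H)$: if some $b_p^{(2)}(H)\neq 0$ we are again done.

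The heart of the matter, and the step I expect to be the main obstacle, is the remaining case in which $H$ is $\ell^2$-acyclic with $\chi(H)=0$ --- the prototype being the fundamental group of a closed hyperbolic odd-dimensional manifold, where every coarse datum and every Euler-characteristic datum is blind to the index. Here the natural move is to pass to the $\ell^2$-torsion $\rho^{(2)}(H)$, a secondary invariant available for $\ell^2$-acyclic groups of determinant class and again multiplicative, $\rho^{(2)}(K)=[H:K]\,\rho^{(2)}(H)$; the reduction would then close the argument provided one knows $\rho^{(2)}(H)\neq 0$. However, this nonvanishing is exactly a case of Lück's $\ell^2$-torsion conjecture and is open in this generality (and even the determinant-class hypothesis is tied to the unresolved soficity of arbitrary hyperbolic groups), so a proof cannot simply invoke it: one must instead manufacture a genuinely new index-multiplicative, nonvanishing invariant by hand. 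I would attempt to build such a quantity geometrically from a canonical finite measure on the space of bi-infinite geodesics modulo the $H$-action (a Bowen--Margulis--Sullivan / Patterson--Sullivan-type mass of a fundamental domain), which scales by the covolume and hence by the index, with strict positivity coming from non-elementarity (a free subgroup and exponential growth). The delicate and decisive point --- where the real work lies --- is to normalize this construction so that it depends only on the abstract group $H$ and not on any chosen metric or generating set, thereby upgrading it from a metric quantity to an honest isomorphism invariant; establishing that canonicity is what the whole argument ultimately rests on.
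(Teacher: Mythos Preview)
The paper does not prove this statement at all: Theorem~\ref{finite index rigid} is quoted verbatim from Lazarovich~\cite{La25} and used as a black box (to conclude that the subgroups $N$ of different indices in Propositions~\ref{lwz25 prop 6.1 (2)} and~\ref{prop end-fixed subgps of H times Z} are pairwise non-isomorphic). There is therefore no ``paper's own proof'' to compare your proposal against.

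As to the proposal itself, it is not a proof but an outline of approaches together with an honest account of why each stalls. You correctly set up the reduction to a nonzero multiplicative invariant, and the cases $\chi(H)\neq 0$ or $b_p^{(2)}(H)\neq 0$ are indeed immediate. But the residual case --- $\ell^2$-acyclic with $\chi=0$, e.g.\ closed hyperbolic $3$-manifold groups --- is the entire content of the theorem, and here you leave a genuine gap. The $\ell^2$-torsion route, as you yourself say, rests on L\"uck's nonvanishing conjecture (and on determinant class, hence soficity), both open for general hyperbolic groups. The Bowen--Margulis--Sullivan mass idea has exactly the defect you flag at the end: the critical exponent and the total mass depend on the chosen word metric (or Cayley graph), and different generating sets give genuinely different numbers, so there is no evident canonical normalization turning this into an isomorphism invariant of the abstract group. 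Saying that ``establishing that canonicity is what the whole argument ultimately rests on'' is accurate, but it means the proposal ends precisely where the difficulty begins. Lazarovich's argument in~\cite{La25} does not go through an analytic multiplicative invariant of this type; if you want to pursue an actual proof you should consult that paper directly.
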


By Theorem \ref{finite index rigid}, the normal subgroups $N$'s constructed in Proposition \ref{lwz25 prop 6.1 (2)} and Proposition \ref{prop end-fixed subgps of H times Z}, are not isomorphic for different $m$. Thus, all the constructed fixed subgroups of $H\times \Z^k (k\geq 1)$ are not isomorphic. Moreover, we can obtain the proof of Theorem \ref{thm hyper. times Z^m finite end-fixed subgps} as follows.

\begin{proof}[\textbf{Proof of Theorem \ref{thm hyper. times Z^m finite end-fixed subgps}}]
$``\Rightarrow"$: If $\beta_1(H)$ is non-zero, Proposition \ref{lwz25 prop 6.1 (2)} and Proposition \ref{prop end-fixed subgps of H times Z} show the existence of, up to isomorphism, infinitely many end-fixed subgroups of $G$, so $\beta_1(H)=0$. Once $H$ contains, up to isomorphism, infinitely many end-fixed subgroups, we can construct the corresponding, up to isomorphism, infinitely many end-fixed subgroups of $G$ by taking $$\edo(G)\ni \Psi=\Psi_{\phi,\textbf{0}, \textbf{0}}:u\textbf{t}^\textbf{a}\mapsto \phi(u)\textbf{t}^\textbf{0}, ~\mbox{for}~ \phi\in\edo(H).$$ Therefore, $H$ contains, up to isomorphism, finitely many end-fixed subgroups.

 $``\Leftarrow"$: Denote the end-fixed subgroups of $H$ as $N_i\ (i=1,\dots,n)$, up to isomorphism. Since $\beta_1(H)=0$, then, by applying Corollary \ref{cor betti=0}, we get that every end-fixed subgroup of $G$ is always isomorphic to one of the groups in the following finite set
 $$\{N_i\times\Z^s\mid i=1,\dots,n, ~0\leq s\leq k\}\cup\{\Z^s\mid 0\leq s\leq k+1\}.$$
 Therefore, $G$ contains, up to isomorphism, finitely many end-fixed subgroups.
 \end{proof}

\textbf{Acknowledgements.} The authors thank Peng Wang for providing useful communication and help in the process of writing.


\end{document}